\newtheorem{theorem}{Theorem}[section]
\newtheorem{definition}[theorem]{Definition}
\newtheorem{lemma}[theorem]{Lemma}
\newtheorem{proposition}[theorem]{Proposition}
\newtheorem{corollary}[theorem]{Corollary}
\newtheorem{example}[theorem]{Example}
\newtheorem{remark}[theorem]{Remark}
\theoremstyle{nonumberplain}
\newtheorem{proof}{Proof}
\newcommand{\V}{{\mathcal{V}}}
\newcommand{\K}{{\mathcal{K}}}
\newcommand{\R}{\mathbb{R}}
\newcommand{\smoo}{C^\infty(G)}
\newcommand{\mx}{\mathfrak{X}}
\newcommand{\lie}[1]{\mathfrak{#1}}
\newcommand{\dr}{\mathbf{d}}
\newcommand{\ldr}[1]{{{\pounds}}_{#1}}
\newcommand{\ip}[1]{{\mathbf{i}}_{#1}}
\newcommand{\an}[1]{\arrowvert_{#1}}
\newcommand{\poi}[1]{\{#1\}}
\newcommand{\pois}{\poi{\cdot\,,\cdot}}
\newcommand{\inv}{^{-1}}
\newcommand{\rr}{\rightrightarrows}
\DeclareMathOperator{\dom}{Dom}
\DeclareMathOperator{\erz}{span}
\DeclareMathOperator{\Ad}{Ad}
\DeclareMathOperator{\ad}{ad}
\DeclareMathOperator{\graph}{Graph}
\DeclareMathOperator{\rad}{rad}
\DeclareFontFamily{U}{matha}{\hyphenchar\font45}
\DeclareFontShape{U}{matha}{m}{n}{
      <5> <6> <7> <8> <9> <10> gen * matha
      <10.95> matha10 <12> <14.4> <17.28> <20.74> <24.88> matha12
      }{}
\DeclareSymbolFont{matha}{U}{matha}{m}{n}
\DeclareMathSymbol{\operp}         {2}{matha}{"6B}
\begin{document}
\title{Dirac Lie groups, Dirac homogeneous spaces and the theorem of
Drinfel$'$d.}
\author{Madeleine Jotz\footnote{\textbf{Madeleine Jotz}, Section de Math{\'e}matiques, 
 Ecole Polytechnique
   F{\'e}d{\'e}rale de Lausanne,
 1015 Lausanne, Switzerland
 (madeleine.jotz@epfl.ch). Supported by a swiss NSF font.}}
\date{}

\maketitle

 \begin{abstract}
\centerline{\textbf{Abstract}}
The notions of \emph{Poisson Lie group} and \emph{Poisson homogeneous space}
are extended to the Dirac category. The theorem of Drinfel$'$d (\cite{Drinfeld93})
on the one-to-one correspondence between Poisson homogeneous spaces
of a Poisson Lie group and a special class of
Lagrangian subalgebras of the Lie bialgebra associated
to the Poisson Lie group is proved to hold in this more general setting.
 \end{abstract}
\noindent \textbf{AMS Classification: 53D17, 22E15, 70H45} 

\noindent \textbf{Keywords: Poisson Lie groups, Dirac manifolds, Lie algebras}

\tableofcontents

\section{Introduction}
A \emph{Poisson Lie group}  is a Lie group  endowed with a
 Poisson structure that is compatible with the Lie group structure.
 Poisson Lie groups were introduced 
 by \cite{Drinfeld83a} and studied by \cite{Semenov85}. Their aim was
to understand the Hamiltonian structure of the group of dressing
transformations of a completely integrable system. 
The study of the  geometry of Poisson Lie
groups was started with the works of  Lu and Weinstein 
(see \cite{Lu90}, \cite{LuWe90}, \cite{LuWe89} among others).
The notion of Poisson Lie group was generalized to the notion
of Poisson Lie groupoids by \cite{Weinstein88b}.

A \emph{Poisson homogeneous space}  of a Poisson Lie group
 is a homogeneous space of the Lie group that is endowed with a Poisson structure such that
the left action of the Lie group on the homogeneous space 
is a Poisson map.
 Poisson homogeneous spaces of  Poisson Lie groups
are in correspondence with suitable subspaces of the direct sum of the Lie
algebra with its dual. We show that this correspondence result
fits in a  more general and natural context: the one of Dirac manifolds,
which are objects generalizing in a sense the Poisson manifolds.

\medskip

Let $G$ be a  Lie group endowed with a  bivector field $\pi_G\in\Gamma(\bigwedge^2TM)$.
The bivector field $\pi_G$ is 
\emph{multiplicative}, and $(G,\pi_G)$ is a \emph{Poisson Lie group},
if $\pi_G$ satisfies 
\[\pi_G(gh)=T_gR_h\pi_G(g)+T_hL_g\pi_G(h)
\] 
for all $g,h\in G$, where $R_h$ is the right multiplication by $h$ on $G$ and
$L_g$ is the left multiplication by $g$. 
In other words, the group multiplication has to be
compatible with the Poisson structure in the sense that the multiplication map
$$m: G\times G \to G$$
is a Poisson map if $G\times G$ is endowed with
 the product Poisson
structure defined by $\pi_G$. 
Equivalently, the graph $\graph(\pi_G^\sharp)\subseteq TG\operp T^*G$ of the 
vector bundle homomorphism 
$\pi_G^\sharp:T^*G\to TG$, $\pi_G^\sharp(\dr f)=\pi_G(\cdot,\dr f)$ 
associated to $\pi_G$ is a subgroupoid of the Pontryagin groupoid 
$TG\operp T^*G\rr\lie g^*$ defined by $G$. 
More generally, a Poisson Lie groupoid is a Lie groupoid 
$G\rr P$ endowed with a Poisson structure $\pi_G$ such that 
$\graph(\pi_G^\sharp)$ is a subgroupoid of the Pontryagin groupoid $TG\operp T^*G
\rr TP\operp A^*G$, where $A^*G$ is the dual of the Lie algebroid $AG\to P$ 
associated to  the Lie groupoid $G\rr P$
(see \cite{CoDaWe87}, \cite{Pradines88}, \cite{Mackenzie05} for the induced Lie
groupoid
structures $TG\rr TP$ and $T^*G\rr A^*G$).

A Poisson Lie group $(G,\pi_G)$ induces a Lie algebra structure on the direct
sum of the Lie algebra $\lie g$ of $G$ with its dual $\lie g^*$.
The adjoint action of $\lie g$ on $\lie g\operp\lie g^*$ integrates to a
natural action of $G$ on $\lie g\operp\lie g^*$, see for
example
\cite{Lu90}.
The generalization of this to Poisson Lie groupoids was done in \cite{MaXu94}: the direct sum
$AG\operp A^*G$ of the Lie algebroid and its dual inherits the structure 
of a \emph{Lie bialgebroid}, that generalizes the Lie bialgebra of a Poisson Lie group.

A \emph{Poisson homogeneous space} $(P,\pi_P)$ of a Poisson Lie group
$(G,\pi_G)$ is a homogeneous space $P$ of $G$ endowed with a Poisson structure
$\pi_P$ such that
the transitive left action of $G$ on $P$ 
$$\sigma:G\times P\to P
$$ 
is a Poisson map, where $G\times P$ is endowed with the product Poisson
structure defined  by $\pi_G$ and $\pi_P$ (see for example \cite{Lu08}).

Consider the pairing on $\lie g\operp\lie g^*$
defined by $\langle (x,\xi),(y,\eta)\rangle=\xi(y)+\eta(x)$
for all $ (x,\xi),(y,\eta)\in\lie g\operp\lie g^*$.
A theorem of Drinfel$'$d (see \cite{Drinfeld93} and \cite{DiMe99}, \cite{Lu08}
for more details about
the proof, see also \cite{LiWeXu98}) states that there is a one-to-one 
correspondence between
Poisson structures  $\pi_{G/H}$ on $G/H$ such that
$(G/H,\pi_{G/H})$ is a Poisson homogeneous space of $(G,\pi_G)$, and 
Lagrangian subalgebras $\lie D$ of $\lie g\operp\lie g^*$
satisfying $\lie D\cap(\lie
g\operp\{0\})
=\lie h\operp\{0\}$ ($\lie h$ being the Lie algebra of the Lie subgroup $H$)
which are invariant under the restriction to
$H$ of the action of $G$ on $\lie g\operp\lie g^*$.   

\medskip

Because of this theorem, it appears natural to try to pass to the category of
Dirac manifolds. A Dirac structure on a manifold $M$ is a subbundle $\mathsf
D$ of its 
\emph{Pontryagin} bundle $\mathsf P_M:=TM\operp T^*M$ 
that is Lagrangian relative to the
natural fiberwise pairing $\langle\cdot\,,\cdot\rangle$ defined on $\mathsf P_M$ by
$$\left\langle(v_m,\alpha_m),(w_m,\beta_m)\right\rangle
=\beta_m(v_m)+\alpha_m(w_m)$$
for all $m\in M$ and $ (v_m,\alpha_m),(w_m,\beta_m)\in T_mM\times T_m^*M$. 
The Dirac manifold $(M,\mathsf D)$ is \emph{integrable} if certain integrability
conditions are satisfied. Dirac manifolds generalize Poisson manifolds in the
sense that the graph of the homomorphism of vector bundles $\pi^\sharp:T^*M\to
TM$ associated to a Poisson bivector field $\pi$ on $M$ defines an integrable Dirac
structure on the manifold $M$.

In this paper, we study \emph{Dirac homogeneous spaces} of \emph{Dirac Lie
  groups} and give a generalization of the
theorem of Drinfel$'$d (our main theorem \ref{drinfeld}) in this more natural setting.

Dirac Lie groups have been defined independently by
\cite{Ortiz08}. The definition is made there in the context of groupoids and
is easily shown to be equivalent to the definition made here. An important
feature of a Dirac Lie group $(G,\mathsf D_G)$ is that the characteristic
distribution $\mathsf{G_0}\subseteq TG$ defined by
$\mathsf{G_0}\operp\{0\}=\mathsf D_G\cap (TM\operp\{0\})$ and the 
characteristic
codistribution
$\mathsf{P_1}=\operatorname{Proj}_{T^*M}(\mathsf D_G)$ are always left and right 
invariant
 and have thus constant dimensional fibers on $G$. Hence, integrable
multiplicative Dirac structures are only a
slight generalization of the graphs of multiplicative Poisson bivector fields. The 
approach
in \cite{Ortiz08} uses this fact for the definition of the Lie bialgebra of a
Dirac Lie group. Here, we formulate everything in the Dirac setting and get the
known results such as the definition of the Lie bialgebra of a Poisson Lie group 
as corollaries in the class of examples given by the Poisson Lie groups.

The reason why we chose this approach is because the situation seems to be 
 quite different in the case of a Dirac Lie groupoid. A Dirac Lie groupoid
is a  groupoid endowed with 
a Dirac structure that is a subgroupoid of the Pontryagin groupoid $TG\operp
T^*G\rr TP\operp A^*G$ (see \cite{Ortiz08t}).
The characteristic distribution $\mathsf{G_0}$ can be more complicated in
this case,
and the geometry involved is not necessarily
induced by an underlying Poisson Lie groupoid (it seems that it is even not
necessarily the case if $\mathsf{G_0}$ has
constant rank, see \cite{Jotz10c}).   

The theorem of Drinfel$'$d has 
been extended in \cite{LiWeXu98}
to a correspondence between a certain class of Dirac subspaces 
of the Lie bialgebroid of a Poisson Lie groupoid and its Poisson homogeneous
spaces. Our next aim is to generalize this result to 
Dirac Lie groupoids (\cite{Jotz10b}). For this, 
we will need to construct the object that will play the
role of the Lie bialgebroid in this setting. Here, the results known for 
Poisson Lie groupoids will be the guidelines, but 
it will not be possible to use them
as it is done in \cite{Ortiz08} in the particular case of Dirac Lie
groups.

\paragraph{Outline of the paper}
Backgrounds about Dirac manifolds and actions of Lie groups are recalled in
section
\ref{section_generalities}. The definition of a Dirac Lie group is given in
section \ref{section_DiracLie} and compared with the definition in
\cite{Ortiz08}. Geometric properties of Dirac Lie groups are proved and the
construction of the Lie bialgebra of a Dirac Lie group is made, as well as
the definition of the induced action of $G$ on it. 

Dirac homogeneous spaces of Dirac Lie groups are defined in 
section \ref{section_Dirachomogeneous} and their first geometric properties are
proved. Our main theorem about the correspondence between (integrable) Dirac homogeneous
spaces of a (integrable) Dirac lie group  and Lagrangian subspaces (subalgebras)
of $\lie g\operp\lie g^*$ is proved there, too.

In section \ref{special_case}, we study the special class of Dirac Lie groups
where the characteristic subgroup $N$ is closed in the Lie group $G$,
and the corresponding Dirac homogeneous spaces. 

\paragraph{Acknowledgments}
The author would like to thank Professor Jiang Hua Lu for many interesting
questions, discussions and advises,
especially for the discussion on cocycles at the end of Subsection \ref{geom_prop},
 Professor Karl-Hermann Neeb for the examples in Section
\ref{special_case}, and Professor 
Tudor Ratiu for many interesting discussions and advice.
Many thanks go also to the referee for his useful comments.

\paragraph{Notations and conventions}
Let $M$ be a smooth manifold. We will denote by $\mx(M)$ and $\Omega^1(M)$ the
sheaves of smooth local  sections of the tangent and the cotangent bundle,
respectively. For an arbitrary vector bundle $\mathsf E\to M$, the sheaf of
local sections of $\mathsf E$ will be written $\Gamma(\mathsf E)$. 
We will write $\dom(\sigma)$ for the open subset of the smooth manifold $M$
where the local section $\sigma\in\Gamma(\mathsf E)$ is defined.

\medskip

We will write $\mathcal D\operp\Delta$ for the direct sum of a subbundle
$\mathcal D$ of $TM$ and a subbundle $\Delta$ of $T^*M$. We choose this
notation to distinguish such a direct sum from a direct sum of subbundles of
the same vector bundle. 
Following \cite{MaYo06}, we will call
the direct sum $\mathsf P_M:=TM\operp T^*M$ the 
\emph{Pontryagin bundle} on the manifold $M$.

\section{Generalities on Dirac structures}\label{section_generalities}
\subsection{Dirac structures}\label{subsection_dirac_structures}
The Pontryagin bundle $\mathsf{P}_M=TM
\operp T^* M$ of a smooth manifold $M$ is
endowed with a non-degenerate symmetric fiberwise bilinear form of signature $(\dim M, \dim M)$ given by
\begin{equation}\label{pairing}
\left\langle (u_m, \alpha_m), ( v_m, \beta_m ) \right\rangle 
: = \left\langle\beta_m , u_ m \right\rangle + \left\langle\alpha_m, v _m \right\rangle
\end{equation}
for all $u _m, v _m \in T _mM$ and $\alpha_m, \beta_m \in T^\ast_mM$. A
\emph{Dirac structure} (see \cite{Courant90a}) on $M $ is a Lagrangian vector 
subbundle $\mathsf{D} \subset \mathsf{P}_M $. That is, $ \mathsf{D}$ coincides with its
orthogonal relative to \eqref{pairing} and so its fibers are necessarily $\dim M $-dimensional.
The pair $(M,\mathsf D)$ is then called a \emph{Dirac manifold}.

\medskip

Let $(M,\mathsf D)$ be a Dirac manifold. 
For each $m\in M$, the Dirac structure $\mathsf{D}$ 
defines two subspaces 
$\mathsf{G_0}(m), \mathsf{G_1}(m) \subset T_mM $ by 
\begin{align*}
\mathsf{G_0}(m)&:= \{v_m \in T_mM \mid (v_m, 0)\in\mathsf D(m) \} \quad\text{
  and} \quad  
\mathsf{G_1}(m):= \left\{v_m \in T_mM \mid \exists\,
\alpha_m \in T_m^*M\; : 
(v_m, \alpha_m) \in \mathsf{D}(m)\right\},
\end{align*}
 and two subspaces 
$\mathsf{P_0}(m), \mathsf{P_1}(m) \subset T^*M$ defined analogously. 
The distributions $\mathsf{G_0}=\cup_{m\in M}\mathsf{G_0}(m)$
and $\mathsf{P_0}=\cup_{m\in M}\mathsf{P_0}(m)$
are not necessarily smooth. 
The distributions
$\mathsf{G_1}=\cup_{m\in M}\mathsf{G_1}(m)$
(respectively $\mathsf{P_1}=\cup_{m\in M}\mathsf{P_1}(m)$)
are smooth since they are the projections on 
$TM$ (respectively $T^*M$) of $\mathsf D$.

We have the equalities 
\[\mathsf{P_0}(m)=\mathsf{G_1}(m)^\circ, \quad \mathsf{G_0}(m)=\mathsf{P_1}(m)^\circ,
\quad \mathsf{P_1}(m)=\mathsf{G_0}(m)^\circ, 
\quad \mathsf{G_1}(m)\subseteq\mathsf{P_0}(m)^\circ.
\]

\medskip
The space $\Gamma(\mathsf{P}_M) $ of local sections of the Pontryagin
bundle is endowed with a  skew-symmetric bracket given by
\begin{align}\label{wrong_bracket}
[(X, \alpha), (Y, \beta) ] : 
&= \left( [X, Y],  \boldsymbol{\pounds}_{X} \beta 
- \boldsymbol{\pounds}_{Y} \alpha + \frac{1}{2} \mathbf{d}\left(\alpha(Y) 
- \beta(X) \right) \right) \nonumber 
= \left([X, Y],  \boldsymbol{\pounds}_{X} \beta - \mathbf{i}_Y
  \mathbf{d}\alpha - \frac{1}{2} \mathbf{d} \left\langle (X, \alpha), (Y, \beta) \right\rangle
\right)
\end{align}
(see \cite{Courant90a}). 
This bracket is $\R$-bilinear (in the sense that 
$[a_1(X_1, \alpha_1)+a_2(X_2, \alpha_2), (Y, \beta) ]$\linebreak
$=a_1[(X_1, \alpha_1), (Y, \beta) ]+a_2[(X_2, \alpha_2), (Y, \beta)
]$ for all $a_1,a_2\in\R$ and
$(X_1, \alpha_1),(X_2, \alpha_2), (Y, \beta)\in\Gamma(\mathsf P_M)$  
on the common domain of
definition of the three sections) and does not in general satisfy the Jacobi identity.

The Dirac structure $\mathsf D$ is 
\emph{integrable} if 
$[ \Gamma(\mathsf{D}), \Gamma(\mathsf{D}) ] \subset \Gamma(\mathsf{D}) $. 
Since $\left\langle (X, \alpha), (Y, \beta) \right\rangle = 0$ if 
$(X, \alpha), (Y, \beta) \in \Gamma(\mathsf{D})$, integrability of the Dirac structure 
is  expressed  relative to a non-skew-symmetric bracket that 
differs from 
\eqref{wrong_bracket} by eliminating in the second line the third term of
the second component. This truncated expression 
is called the 
\emph{Courant-Dorfman} or \emph{Dorfman bracket} (\cite{Dorfman93}):
\begin{equation}\label{Courant_bracket}
[(X, \alpha), (Y, \beta) ] : 
= \left( [X, Y],  \boldsymbol{\pounds}_{X} \beta - \ip{Y} \dr\alpha \right).
\end{equation} 
The restriction of the Courant bracket to the sections of an integrable Dirac structure
is skew-symmetric and satisfies the Jacobi identity.
It satisfies also the Leibnitz-rule:
\begin{equation}\label{leibnitz} 
[(X,\alpha), f(Y,\beta)]=f[(X,\alpha), (Y,\beta)]+ X(f)\cdot (Y,\beta)
\end{equation}
for all $(X,\alpha), (Y,\beta)\in\Gamma(\mathsf D)$ and $f\in C^\infty(M)$.

The Dirac manifold $(M,\mathsf D)$ is integrable if and only if the \emph{tensor} 
$T_{\mathsf{D}}$ defined on sections $(X,\alpha),(Y,\beta),(Z,\gamma)$ 
of $\mathsf D$ by
 $T_{\mathsf{D}}\Bigl( (X,\alpha),(Y,\beta),(Z,\gamma)\Bigr)
=\Bigl\langle [(X, \alpha), (Y, \beta)] ,(Z,\gamma)\Bigr\rangle
$
vanishes identically on $M$ (see \cite{Courant90a}).
\medskip

\medskip

The class of Dirac structures presented in the next example 
will be very important in the following.
\begin{example}\label{exPoisson}
Let $M$ be a smooth manifold endowed with a globally defined bivector field
$\pi\in\Gamma\left(\bigwedge^2 TM\right)$.  Then the subdistribution $\mathsf
D_\pi\subseteq \mathsf P_M$ defined by
\[\mathsf{D}_\pi(m)=\left\{(\pi^\sharp(\alpha),\alpha)(m)\mid
  \alpha\in\Omega^1(M), \dom(\alpha)\ni m\right\}\quad \text{ for all }\quad m\in M,
\]
where $\pi^\sharp:T^*M\to TM$ is defined by 
$\pi^\sharp(\alpha)=\pi(\alpha,\cdot)\in\mx(M)$ for all
$\alpha\in\Omega^1(M)$,
is a Dirac structure on $M$. It is integrable if and only if the bivector field
satisfies
$[\pi,\pi]=0$, that is, if and only if $(M,\pi)$ is a Poisson manifold.

Note that for this class of Dirac manifolds, $T_{\mathsf D_\pi}$ is a $3$-tensor on
$T^*M$ since each section of $\mathsf{P_1}=T^*M$ 
corresponds  to exactly one section of $\mathsf D_\pi$. 
The equality $2\cdot T_{\mathsf D_\pi}
=[\pi,\pi]$ (compare (1.82) in \cite{DuZu05} with Proposition 2.5.3 in
 \cite{Courant90a}), 
where $[\cdot\,,\cdot]$ is the Schouten
bracket, shows that $\pi$ is a Poisson bivector if and only if 
$\mathsf D_\pi$ is integrable.
\end{example}

\paragraph{The product of two Dirac manifolds.}
Let $(M,\mathsf{D_M})$ and $(N,\mathsf D_N)$ be Dirac manifolds. 
Consider the product $M\times N$. We identify in the following always (without
mentioning it) the tangent space $T(M\times N)$ with $TM\oplus TN$, and write
$(v_p,w_q)$ for the elements of $T_{(p,q)}(M\times N)=T_pM\oplus
T_qN$. That is, an element of $\mx(M\times N)$ is written $(X,Y)$
with $X\in\mx(M)$ and $Y\in\mx(N)$. 
We identify in the same manner $T^*(M\times N)$ with
$T^*M\oplus T^*N$.

The \emph{product Dirac structure} $\mathsf{D}_M\oplus
\mathsf{D}_N$ on $M\times N$ is the direct sum of $\mathsf{D}_M$ 
and $\mathsf D_N$: the pair
$\left((X,Y),(\alpha,\beta)\right)$ is a section of $\mathsf{D}_M\oplus \mathsf{D}_N$ 
if and only if
$(X,\alpha)\in\Gamma(\mathsf{D}_M)$ and $(Y,\beta)\in\Gamma(\mathsf{D}_N)$.

The Dirac manifold $(M\times N,\mathsf D_M\oplus\mathsf D_N)$ is integrable 
if and only if $(M,\mathsf D_M)$ and $(N,\mathsf D_N)$ are integrable.

\paragraph{Maps in the Dirac category.}
Let $(M,\mathsf D_M)$ and $(N,\mathsf D_N)$ be smooth Dirac manifolds
and $\varphi:M\to N$ a smooth map.
The map $\varphi$ is said to be \emph{backward Dirac} if
for all $(X,\alpha)\in\Gamma(\mathsf D_M)$ there exists
$(Y,\beta)\in\Gamma(\mathsf D_N)$ such that
\[ X\sim_\varphi Y \quad \text{ and }\quad \alpha=\varphi^*\beta.\]
The map $\varphi$ is said to be \emph{forward Dirac} if
for all $(Y,\beta)\in\Gamma(\mathsf D_N)$ there exists
$(X,\alpha)\in\Gamma(\mathsf D_M)$ such that
\[ X\sim_\varphi Y \quad \text{ and } \quad\alpha=\varphi^*\beta.\]

\paragraph{Symmetries of a Dirac manifold $(M,\mathsf{D})$.}
Let $G$ be a Lie group and
$\Phi: G\times M \rightarrow M$ a smooth left action. Then $G$ is called a
\emph{symmetry Lie group of} $(M,\mathsf{D})$ if for every $g\in G$ the condition
$(X,\alpha) \in \Gamma(\mathsf{D})$ implies that  $\left( \Phi_g^\ast X, \Phi_g^\ast
  \alpha \right) \in \Gamma(\mathsf{D})$. We say then that the Lie group $G$ acts
\textit{canonically} or \textit{by Dirac actions} on $M$. 

Let $\mathfrak{g}$ be a Lie algebra and $x \in \mathfrak{g} \mapsto x_M \in
\mathfrak{X}(M)$ be a smooth left Lie algebra action, that is, the map $(m, x) \in
M \times \mathfrak{g} \mapsto x_M(m) \in TM $ is smooth and $x \in\mathfrak{g}
\mapsto x_M  \in \mathfrak{X}(M)$ is a Lie algebra anti-homomorphism.  The Lie
algebra $\mathfrak{g}$ is said to be a 
\emph{symmetry Lie algebra of} $(M,\mathsf{D})$ if for every $x \in \mathfrak{g}$ the condition
$(X,\alpha) \in \Gamma(\mathsf{D})$ implies that  
$\left(\boldsymbol{\pounds}_{x_M}X,\boldsymbol{\pounds}_{x_M}\alpha \right) \in
\Gamma(\mathsf{D})$.  Of course, if $\mathfrak{g}$ is the Lie algebra of
$G $ and $x\mapsto x_M$ the  infinitesimal action map induced by the $G
$-action on $M $, then if $G $
is a symmetry Lie group  of $\mathsf{D}$ it follows that $\mathfrak{g}$ is a symmetry Lie
algebra of $\mathsf{D}$.

\paragraph{Regular reduction of Dirac structures by Lie group actions}\label{reduction}
Assume that we have a canonical free and proper $G$-action on the Dirac manifold 
$(M, \mathsf
D)$. 
Let $\mathcal V$ be the vertical space of the action, that is, the vector
subbundle of $TM$ spanned by the fundamental vector fields $x_M$, $x\in \lie
g$, where $\lie g $ is the Lie  algebra of the Lie group $G$. Set $\mathcal
K:=\mathcal V\operp\{0\}\subseteq \mathsf P_M$ and consider its smooth
orthogonal $\K^\perp\subseteq \mathsf P_M$ relative to the bracket
$<\cdot\,,\cdot>$ on $\mathsf P_M$.
Then both vector bundles $\mathsf D$ and $\K^\perp$ are
$G$-invariant and it is shown in \cite{JoRa08} following \cite{BuCaGu07}
that, under the assumption that $\mathsf D\cap\K^\perp$ 
is a vector bundle on $M$,  the ``quotient'' 
\begin{equation}\label{bucagured}
\mathsf D_{M/G}=\left.\frac{(D\cap \mathcal{K}^\perp)+\mathcal{K}}{\mathcal{K}}\right/G
\end{equation}
defines a Dirac structure on $M/G$, called the \emph{reduced} Dirac structure.
The formulation of this in terms of  smooth
sections is the following: the
 reduced Dirac structure on
$M/G$ is given by
\begin{align}\label{bard}
\mathsf {D}_{M/G}=\operatorname{span}\left\{(\bar X,\bar \alpha)\in
  \Gamma(\mathsf P_{M/G})
\left| \exists
X\in \mathfrak{X}(M) \text{ such that } X\sim_q\bar{X}\text{ and
}(X,q^*\bar{\alpha})\in \Gamma(\mathsf D)\right.\right\},
\end{align}
where $q:M\to M/G$ is the quotient map (see
\cite{Blankenstein00}, \cite{BlvdS01}).

\medskip
The Dirac structure $\mathsf D_{M/G}$ is then the \emph{forward Dirac
image $q(\mathsf D)$ of $\mathsf D$ under $q$}.
Note that the
\emph{pullback Dirac structure} or \emph{backward Dirac image
$q^*\mathsf D_{M/G}$ of a Dirac bundle $\mathsf D_{M/G}\subseteq \mathsf{P}_{M/G}$ under $q$},
defined on $M$ 
by
\[\Gamma\left(q^*\mathsf D_{M/G}\right)
=\left\{
(X,\alpha)\in \Gamma(\mathsf{P}_M)
\left|
\exists (\bar X,\bar\alpha)\in\Gamma(\mathsf
    D_{M/G})
\text{ such that } X\sim_q \bar X,\,\alpha=q^*\bar\alpha
\right.\right\},\]
is a Dirac structure on $M$.
It is easy to check that 
 $\mathsf D$ and $q^*(q(\mathsf D))$ are equal
if and only if we have the inclusion $\mathcal V\subseteq \mathsf{G_0}$.
The equality $q(q^*\mathsf D_{M/G}))=\mathsf D_{M/G}$ holds for any Dirac
structure
$\mathsf D_{M/G}$ on $M/G$.

\subsection{Invariant Dirac structures on a Lie group}
\begin{definition}
A Dirac structure $\mathsf{D}\subseteq TG\operp TG^*$ on a Lie group $G$ is
called \emph{left invariant} if it is invariant under the action of $G$ on
$TG\operp TG^*$ induced from the left action of $G$ on itself. 
That is, if for all $g, h\in G$ and all 
$(v_g,\alpha_g)\in \mathsf{D}(g)$ we have
$(T_gL_hv_g,(T_{hg}L_{h^{-1}})^*\alpha_g)\in \mathsf{D}(hg)$.

In the same manner, a Dirac structure $\mathsf{D}$ on a Lie group $G$ is called 
\emph{right invariant} if it is invariant under the action on $TG\operp TG^*$
induced from the right action of $G$ on itself. 
\end{definition}

Let $G$ be a Lie group with Lie algebra $\lie g$ and let $\lie D$ be a
Dirac subspace of $\lie g\oplus\lie g^*$, that is, $\lie D$ is a vector
subspace of  $\lie g\oplus\lie g^*$ that is orthogonal to itself relative to
the pairing $\langle\cdot\,,\cdot\rangle_{\lie g}$ defined on $\lie g\oplus\lie g^*$ by
$\langle(x,\xi),(y,\eta)\rangle_{\lie g}=\eta(x)+\xi(y)$
for all $x,y\in\lie
g$ and $\xi,\eta\in\lie g^*$.
We set 
\[\lie g_0:=\{x\in \lie g\mid (x,0)\in \lie D\},
\quad \lie g_1:=\{x\in \lie g\mid \exists \xi\in \lie g^* :
(x,\xi)\in \lie D\},\]
\[\lie p_0:=\{\xi\in \lie g^*\mid (0,\xi)\in \lie D\},
\quad \text{ and }\quad \lie p_1:=\{\xi\in \lie g^*\mid \exists x
\in \lie g :
(x,\xi)\in \lie D\}.\]
Then we have $
\lie g_0^\circ=\lie p_1,
\lie p_1^\circ=\lie g_0,
\lie g_1^\circ=\lie p_0$, 
 and  $\lie p_0^\circ=\lie g_1.
$

\medskip

Let $\lie D$ be a Dirac subspace of $\lie g\oplus\lie g^*$, and define
$\lie D^L$ on $G$ by 
\[\lie D^L(g)=\{(T_eL_gx,(T_gL_{g^{-1}})^*\xi)\mid (x,\xi)\in
\lie D\}\]
for all $g\in G$. Then $\lie D^L$ is a left invariant Dirac structure on
$G$. Conversely, if $\mathsf{D}$ is a left-invariant Dirac structure on a Lie group
$G$, then $\mathsf{D}=\lie D^L$, 
where $\lie D:=\mathsf{D}(e)\subseteq \lie g\oplus \lie g^*$.

The next  proposition shows that 
the integrability of $\lie D^L$ depends  only on
$\lie D$ (see also  \cite{Milburn07}).
\begin{proposition}\label{invariant_Dirac_closed}
The Dirac structure $\lie D^L$ is integrable if and only if  
$\zeta([x,y])+\xi([y,z])+\eta([z,x])=0
$
for all pairs $(x,\xi), (y,\eta)$ and $ (z,\zeta)\in \lie D$.
\end{proposition}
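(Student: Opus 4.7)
The plan is to exploit the fact that the integrability obstruction
\[
T_{\lie D^L}\bigl((X,\alpha),(Y,\beta),(Z,\gamma)\bigr)
=\bigl\langle [(X,\alpha),(Y,\beta)],(Z,\gamma)\bigr\rangle,
\]
built from the Courant--Dorfman bracket \eqref{Courant_bracket}, is a tensor on sections of $\lie D^L$. Therefore integrability of $\lie D^L$ is equivalent to $T_{\lie D^L}$ vanishing on any fibrewise frame. Since $\lie D^L$ is globally spanned at every point by the left-invariant sections $(x^L,\xi^L)$ with $(x,\xi)\in\lie D$, where $x^L$ denotes the left-invariant vector field with $x^L(e)=x$ and $\xi^L$ the left-invariant $1$-form with $\xi^L(e)=\xi$, it suffices to evaluate $T_{\lie D^L}$ on triples of such sections.

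The next step is to compute the Courant--Dorfman bracket of two left-invariant sections. Using $[x^L,y^L]=[x,y]^L$ and the fact that $\eta^L(y^L)$ is the constant function $\eta(y)$, the Koszul formula applied to $\ldr{x^L}\eta^L$ and to $\dr\xi^L$ yields
\[
(\ldr{x^L}\eta^L)(y^L)=-\eta([x,y]),\qquad
\dr\xi^L(y^L,z^L)=-\xi([y,z]),
\]
so both $\ldr{x^L}\eta^L$ and $\ip{y^L}\dr\xi^L$ are again left-invariant $1$-forms, corresponding at $e$ to $-\ad_x^*\eta$ and $-\ad_y^*\xi$ respectively. Hence
\[
[(x^L,\xi^L),(y^L,\eta^L)]=\bigl([x,y]^L,\,(-\ad_x^*\eta+\ad_y^*\xi)^L\bigr),
\]
which is itself a left-invariant section of $\mathsf P_G$.

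Pairing this bracket with $(z^L,\zeta^L)$ using the definition of $\pairing$ and evaluating the resulting constant function at $e$ gives, after routine sign bookkeeping,
\[
T_{\lie D^L}\bigl((x^L,\xi^L),(y^L,\eta^L),(z^L,\zeta^L)\bigr)
=\zeta([x,y])+\xi([y,z])+\eta([z,x]).
\]
Both directions of the proposition now follow: $\lie D^L$ is integrable iff this cyclic expression vanishes for all $(x,\xi),(y,\eta),(z,\zeta)\in\lie D$. The only thing to be careful about is justifying that checking $T_{\lie D^L}$ on the left-invariant frame is enough, which is immediate from its $C^\infty(G)$-trilinearity; the remaining work is the Koszul/Maurer--Cartan computation, which is entirely mechanical.
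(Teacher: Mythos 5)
Your proposal is correct and follows essentially the same route as the paper: reduce to the left-invariant spanning sections $(x^L,\xi^L)$ (you via the $C^\infty(G)$-trilinearity of $T_{\lie D^L}$, the paper via the Leibniz rule \eqref{leibnitz} together with the Lagrangian property of $\lie D^L$ — the same content), then compute $[(x^L,\xi^L),(y^L,\eta^L)]$ and pair it with $(z^L,\zeta^L)$ to obtain the cyclic expression $\zeta([x,y])+\xi([y,z])+\eta([z,x])$. The only cosmetic difference is your sign convention for $\ad^*$ (the paper uses $\ad_x^*\xi(y)=\xi([y,x])$), but your final formula agrees with the paper's.
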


\begin{proof}
Recall that $\lie D^L$ is integrable if for all sections $(X,\alpha),
(Y,\beta)\in\Gamma(\lie D^L)$, we have 
$[(X,\alpha),(Y,\beta)]=([X,Y],\ldr{X}\beta-\ip{Y}\dr\alpha)\in\Gamma(\lie D^L)
$.

By \eqref{leibnitz}, it suffices to show this for a set of spanning
sections of $\lie D^L$.
For $(x,\xi)\in \lie D$, the left invariant pair $(x^L,\xi^L)$, defined by
$(x^L(g),\xi^L(g))=(T_eL_gx,(T_gL_{g^{-1}})^*\xi)$ for all $g\in G$,
is a section of $\lie D^L$. Choose $(x,\xi)$,  $(y,\eta)$ and $(z,\zeta)\in
\lie D$. Then we have 
$[x^L,y^L]=[x,y]^L$ by definition of the Lie bracket in $\lie g$,
$\ldr{x^L}\eta^L=(\ad_x^*\eta)^L$,
$\ip{y^L}\dr\xi^L=(\ad_y^*\xi)^L$, where
for $\xi\in\lie g^*$ and $x\in\lie g$, the element $\ad_x^*\xi\in\lie g^*$ is
defined
by $\ad_x^*\xi(y)=\xi([y,x])$ for all $y\in \lie g$.
We get 
\begin{align*}
\left\langle \left([x,y]^L, \ldr{x^L}\eta^L-\ip{y^L}\dr\xi^L\right),
\left(z^L,\zeta^L\right)\right\rangle
&=\zeta([x,y])+\eta([z,x])+\xi([y,z]).
\end{align*}
Hence, since the sections $\left(z^L,\zeta^L\right)$, for all
$(z,\zeta)\in \lie D$, are spanning sections for $\lie D^L$, we conclude
that \linebreak
$[(x^L,\xi^L),(y^L,\eta^L)]= ([x,y]^L,
\ip{x^L}\dr\eta^L-\ip{y^L}\dr\xi^L)$ is a section of $\lie D^L$ if and only if 
$\zeta([x,y])+\eta([z,x])+\xi([y,z])=0$ for all
$(z,\zeta)\in \lie D$.
\end{proof}
Note that we have shown simultaneously that if $\lie D^L$ is integrable, then $\lie
g_0$ and $\lie g_1$ are Lie subalgebras of $\lie g$. Since $\mathsf{G_0}$ and
$\mathsf{G_1}$ are here obviously equal to $\lie g_0^L$ and $\lie g_1^L$,
respectively, we recover the fact that both distributions are then integrable.

\section{Dirac Lie groups}\label{section_DiracLie}
\subsection{Definitions}
\begin{definition}
A \emph{Dirac Lie group} is a Lie group $G$ endowed with a Dirac structure $\mathsf{D}_G\subseteq
TG\operp T^*G$  such that the
group multiplication map 
\[m:\left(G\times G,\mathsf D_G\oplus\mathsf D_G\right)\to \left(G,\mathsf
  D_G\right)\] 
is a \emph{forward Dirac map}. The Dirac structure $\mathsf D_G$ on $G$ is
called \emph{multiplicative} if it satisfies this condition.
\end{definition}

More explicitly,  there
exist for all  $g, h\in G$ and pairs $(v_{gh},\alpha_{gh})$ in 
$\mathsf{D}_G(gh)$, two pairs $(w_g,\beta_g)\in \mathsf{D}_G(g)$ and
$(u_h,\gamma_h)\in \mathsf{D}_G(h)$ such that 
\[ T_{(g,h)}m(w_g,u_h)=v_{gh}   \quad      \text{ and } \quad  (\beta_g,\gamma_h)=(T_{(g,h)}m)^*\alpha_{gh}.  \]
That is,  we have 
\[T_hL_gu_h+T_gR_hw_g=v_{gh}\in T_{gh}G\quad \text{ and }\quad 
(\beta_g,\gamma_h)=\alpha_{gh}\circ T_{(g,h)}m ,\]
which is equivalent to the following:
for all $(x_g,y_h)\in T_{(g,h)}(G\times G)$, we have 
\[\beta_g(x_g)+\gamma_h(y_h)=(\beta_g,\gamma_h)(x_g,y_h)=\alpha_{gh}(T_hL_gy_h+T_gR_hx_g).\]
Hence, we have in particular
$\beta_g=(T_gR_h)^*\alpha_{gh} \text{ and } \gamma_h=(T_hL_g)^*\alpha_{gh}$.

The next example shows that Dirac Lie groups generalize Poisson Lie
groups to the category of Dirac manifolds.
\begin{example}\label{eqDirac-Poisson}  
Let $(G,\pi)$ be a Lie group endowed with a bivector field
$\pi\in\Gamma(\bigwedge^2 TM)$, and let
$(G,\mathsf{D}_{\pi})$ be the associated Dirac structure as in Example \ref{exPoisson}.
We show that $\pi$ is multiplicative (see \cite{LuWe90}) if and only if 
$(G,\mathsf{D}_{\pi})$ is a Dirac Lie group.
The bivector field $\pi$ is multiplicative  if and only if 
$\pi(gh)=T_hL_g(\pi(h))+T_gR_h(\pi(g))$
for all $g,h\in G$.
For $f\in \smoo$, we define $X_f:=\pi(\cdot,\dr f)=-\pi^\sharp(\dr f)\in \mx(G)$.
Then the pair $(-X_f,\dr f)$ is a section of $\mathsf{D}_{\pi}$.
Choose $(-X_f(gh),\dr f _{gh})\in \mathsf{D}_{\pi}(gh)$ and consider  
$\dr (R_h^*f)_g\in T_gG^*$
and $\dr (L_g^*f)_h\in T_hG^*$. Then we have
$(-X_{R_h^*f}(g),\dr (R_h^*f)_g)\in
\mathsf{D}_{\pi}(g)$ and  $(-X_{L_g^*f}(h), \dr (L_g^*f)_h)\in \mathsf{D}_{\pi}(h)$.
This yields for an arbitrary $\alpha_{gh}$ in $T_{gh}^*G$:
\begin{align*}
\alpha_{gh}\left(T_gR_hX_{R_h^*f}(g)+T_hL_gX_{L_g^*f}(h)\right)
=& \pi(g)(\alpha_{gh}\circ T_gR_h,(R_h^*\dr f)_g)+\pi(h)(\alpha_{gh}\circ T_hL_g,(L_g^*\dr
f)_h)\\
=&(T_gR_h\pi(g))(\alpha_{gh},\dr f_{gh})+(T_hL_g\pi(h))(\alpha_{gh},\dr f_{gh}).
\end{align*}
The last sum is equal to 
$
\pi(gh)(\alpha_{gh},\dr f_{gh})=\alpha_{gh}\left( X_f(gh)      \right)
$
for all $f\in \smoo$ 
if and only if $\pi$ is multiplicative.
Hence, the equality 
$T_gR_h(-X_{R_h^*f}(g))+T_hL_g(-X_{L_g^*f}(h))=-X_f(gh)$ holds for all $f\in \smoo$ 
if and only if $\pi$ is multiplicative.
Since $(-X_f(gh),\dr f_{gh}) $ was an arbitrary element of
$\mathsf{D}_{\pi}(gh)$, we
have shown that 
$(G,\mathsf{D}_{\pi})$ is a Dirac Lie group if and only if 
$(G,\pi)$ is multiplicative. Thus, $(G,\mathsf{D}_{\pi})$ is an integrable
Dirac Lie group if and only if 
$(G,\pi)$ is a Poisson Lie group.
\end{example}

Note that the Dirac structure associated to the trivial Poisson structure on
$G$ is given by $\mathsf{D}_0=\{0\}\operp TG^*$. Since a Lie group endowed with the
trivial Poisson  structure $\pi=0$ on $G$ is always a Poisson Lie group, this shows that
$(G,\{0\}\operp TG^*)$ is a (integrable) Dirac Lie group. This can also be checked
directly from the definition.
\paragraph{The Dirac Lie group as a subgroupoid of the Pontryagin
  bundle}
An other approach to Dirac Lie groups can be found in \cite{Ortiz08}. For the
sake of completeness, we
show that both definitions are equivalent. For this, we have to introduce the
groupoid structure on the Pontryagin bundle of a Lie group.

Let $G$ be a Lie group. Then its Pontryagin bundle $\mathsf{P}_G$ has the structure
of a Lie groupoid over $\lie g^*$ as
follows. 
The \emph{target} and \emph{source} maps $\mathsf t$
and $\mathsf s$ are defined by
\[\begin{array}{cccc}
\mathsf t:&TG\operp TG^*&\to&\lie g^*\\
&(v_{g},\alpha_g)\in T_gG\times T_gG^*&\mapsto& (T_eR_g)^*\alpha_g
\end{array}\quad
\text{ and } 
\quad \begin{array}{cccc}
\mathsf s:&TG\operp TG^*&\to&\lie g^*\\
&(v_{g},\alpha_g)\in T_gG\times T_gG^*&\mapsto&  (T_eL_g)^*\alpha_g
\end{array}.
\]
If $\mathsf s(v_g,\alpha_g)=\mathsf t(w_h,\beta_h)$, then the product
$(v_g,\alpha_g)\star(w_h,\beta_h)$ makes sense and is equal
to
\begin{align*}
(v_g,\alpha_g)\star(w_h,\beta_h)
&=\bigl(T_gR_hv_g+T_hL_gw_h,(T_{gh}R_{h\inv})^*\alpha_g
\bigr)=\bigl(T_gR_hv_g+T_hL_gw_h,(T_{gh}L_{g\inv})^*\beta_h
\bigr).
\end{align*}
The identity map $\mathsf u: \lie g^*\to \mathsf{P}_G$ is given by 
$\mathsf u(\xi)=(0,\xi)\in \lie g\operp \lie g^*$ and 
the inverse map $\mathsf i: \mathsf{P}_G\to \mathsf{P}_G$ is defined
by $$\mathsf i: (v_g,\alpha_g)
\mapsto \bigl(-T_g(L_{g\inv}R_{g\inv})v_g,T_g(L_gR_{g\inv})^*\alpha_g\bigl).$$

Given this definition, it is easy to verify that
the graph $\mathsf D_{\pi}\subseteq \mathsf{P}_G$ of a Poisson structure on the Lie  group
$G$ is multiplicative if and only if
$\mathsf D_{\pi}$ is a \emph{subgroupoid} of the Pontryagin 
groupoid.

In \cite{Ortiz08}, a Dirac Lie group is hence defined as follows:
a Dirac structure $\mathsf D_G$ on a Lie group is called \emph{multiplicative} 
if $\mathsf D_G$ is a subgroupoid of the Pontryagin groupoid. 
The pair $(G,\mathsf D_G)$ is then called a Dirac Lie group.

It is easy to prove that the two definitions of a multiplicative
 Dirac structure on $G$ are equivalent.

\begin{remark}
The behavior of the \emph{Cartan Dirac structure} under group multiplication
is studied in \cite{AlBuMe07}. This interesting example of a Dirac structure
on a Lie group is defined on a Lie group
$G$ with Lie algebra $\lie g$ endowed with a bilinear, symmetric
$\Ad$-invariant bilinear form (see \cite{AlBuMe07} and references therein).
\end{remark}

\subsection{Geometric properties of Dirac Lie groups}\label{geom_prop}
In this section and the following, $(G,\mathsf D_G)$ will always be a Dirac
Lie group. We denote by  
$\lie g_1:=\mathsf{G_1}(e)$, $\lie
g_0:=\mathsf{G_0}(e)$,  $\lie p_1:=\mathsf{P_1}(e)$ and 
$\lie p_0:=\mathsf{P_0}(e)$
the smooth characteristic distributions evaluated at the neutral
element $e$ of $G$.

The following result has been shown independently by \cite{Ortiz08}.
\begin{proposition}\label{oncardis}
Let $(G,\mathsf{D}_G)$ be a Dirac Lie group. The associated codistribution (respectively
distribution) $\mathsf{P_1}$ (respectively $\mathsf{G_0}$) has constant rank
on $G$, and is given by $\mathsf{P_1}=\lie p_1^L=\lie p_1^R$ (respectively
$\mathsf{G_0}=\lie g_0^L=\lie g_0^R$).
\end{proposition}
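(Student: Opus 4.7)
The plan is to use the pointwise description of the forward Dirac property of multiplication, exactly as spelled out after the definition of a Dirac Lie group, together with the fact that inversion lets us write both $g\cdot e = g$ and $g^{-1}\cdot g = e$ (or $g\cdot g^{-1}=e$), to transport $\mathsf{P_1}(e) = \lie p_1$ along left or right translations.

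First I would establish the inclusion $\mathsf{P_1}(g) \subseteq \lie p_1^L(g)$. Given $\alpha_g \in \mathsf{P_1}(g)$, choose $v_g \in T_gG$ with $(v_g,\alpha_g)\in\mathsf D_G(g)$ and apply the forward Dirac decomposition at the product $g = g\cdot e$. This yields $(w_g,\beta_g)\in\mathsf D_G(g)$ and $(u_e,\gamma_e)\in\mathsf D_G(e)$ with $\gamma_e = (T_eL_g)^*\alpha_g$. Thus $\gamma_e \in \lie p_1$, which inverts to $\alpha_g = (T_gL_{g^{-1}})^*\gamma_e \in \lie p_1^L(g)$. For the converse inclusion $\lie p_1^L(g)\subseteq \mathsf{P_1}(g)$, start with $\xi\in\lie p_1$, pick $x\in\lie g$ with $(x,\xi)\in\mathsf D_G(e)$, and apply forward Dirac at the decomposition $e = g^{-1}\cdot g$. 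This produces $(u_g,\gamma_g)\in\mathsf D_G(g)$ with $\gamma_g = (T_gL_{g^{-1}})^*\xi$, showing that $(T_gL_{g^{-1}})^*\xi \in \mathsf{P_1}(g)$. Together these give $\mathsf{P_1}=\lie p_1^L$.

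The right-invariant statement $\mathsf{P_1}=\lie p_1^R$ follows by the symmetric argument, replacing $g = g\cdot e$ by $g = e\cdot g$ for one inclusion and $e = g^{-1}\cdot g$ by $e = g\cdot g^{-1}$ for the other, reading off $\beta = (T_gR_{g^{-1}})^*\xi$ instead of $\gamma = (T_gL_{g^{-1}})^*\xi$. Constant rank is then immediate because each $\mathsf{P_1}(g)$ is the image of the fixed finite-dimensional subspace $\lie p_1$ under an invertible dual translation.

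Finally, the corresponding statements for $\mathsf{G_0}$ come for free from the duality already recorded in Subsection \ref{subsection_dirac_structures}, namely $\mathsf{G_0}(g)=\mathsf{P_1}(g)^\circ$ and, at $e$, $\lie g_0 = \lie p_1^\circ$. Indeed, $\mathsf{G_0}(g) = (\lie p_1^L(g))^\circ = T_eL_g(\lie p_1^\circ) = T_eL_g(\lie g_0) = \lie g_0^L(g)$, and analogously $\mathsf{G_0}=\lie g_0^R$. The only point that requires any care is checking that the pointwise forward Dirac property gives us lifts at the \emph{specific} point $(g,e)$ or $(g^{-1},g)$ we need rather than merely at \emph{some} preimage; but the excerpt's pointwise reformulation explicitly allows arbitrary $(g,h)$, so this is automatic. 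There is no serious obstacle once one picks the right factorisation of $g$ or $e$.
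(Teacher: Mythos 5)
Your proof is correct, but it takes a genuinely different route from the paper's. The paper proves the statement for $\mathsf{G_0}$ first, and does so using the groupoid formulation of multiplicativity from Ortiz: it takes $(v_g,0_g)\in\mathsf D_G(g)$, observes that its source and target both vanish, and multiplies it in the Pontryagin groupoid by the zero element $(0_{g\inv},0_{g\inv})\in\mathsf D_G(g\inv)$ on either side to land in $\mathsf D_G(e)$; the statement for $\mathsf{P_1}$ is then obtained by taking annihilators. You instead work with $\mathsf{P_1}$ first and argue directly from the paper's primary definition (the multiplication map being forward Dirac), decomposing at the factorisations $g=g\cdot e$, $g=e\cdot g$, $e=g\inv\cdot g$ and $e=g\cdot g\inv$, and then recover $\mathsf{G_0}$ by annihilators. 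Both arguments are valid; the trade-off is that the paper's version is shorter once the groupoid language is in place, but it silently uses the closure of $\mathsf D_G$ under the groupoid product $\star$, which is the \emph{converse} direction of the forward Dirac condition and rests on the asserted-but-unproved equivalence of the two definitions of multiplicativity. Your version needs only the decomposition direction stated explicitly right after the definition of a Dirac Lie group, so it is the more self-contained of the two relative to the paper's own setup. Your covector bookkeeping ($\gamma_e=(T_eL_g)^*\alpha_g$ inverting to $\alpha_g=(T_gL_{g\inv})^*\gamma_e$, and the annihilator computation $(\lie p_1^L(g))^\circ=T_eL_g\lie g_0$) is accurate, and the constant-rank conclusion follows as you say.
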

\begin{proof}
We  use the definition of \cite{Ortiz08t}. If $v_g$ is an element of
$\mathsf{G_0}(g)$, we have $(v_g,0_g)\in\mathsf D_G(g)$ and $\mathsf
t(v_g,0_g)=\mathsf s(v_g,0_g)=0\in\lie g^*$. Thus, since 
$(0_{g\inv}, 0_{g\inv})\in\mathsf D_G(g\inv)$, we have 
$(0_{g\inv}, 0_{g\inv})\star (v_g,0_g)\in\mathsf D_G(e)$ and 
$(v_g,0_g)\star (0_{g\inv}, 0_{g\inv})\in\mathsf D_G(e)$.
But it is easy to see that $0_{g\inv} \star
v_g=T_{(g\inv,g)}m(0_{g\inv},v_g)=T_gL_{g\inv}v_g$ and 
$v_g \star 0_{g\inv}=T_gR_{g\inv}v_g$, and we get
$(T_gL_{g\inv}v_g,0_e)\in\mathsf D_G(e)$ and 
$(T_gR_{g\inv}v_g,0_e)\in\mathsf D_G(e)$.
We have thus shown that $T_gL_{g\inv}\mathsf{G_0}(g)\subseteq 
\lie g_0$ and $T_gR_{g\inv}\mathsf{G_0}(g)\subseteq 
\lie g_0$. Conversely, if $x\in\lie g_0$, then 
$(x,0)\in\mathsf D_G(e)$ and $(T_eL_gx, 0_g)=(0_g,0_g)\star (x,0)\in\mathsf
D_G(g)$ and $(T_eR_gx, 0_g)=(x,0)\star(0_g,0_g) \in\mathsf
D_G(g)$. Thus, we have shown the equalities $\mathsf{G_0}(g)=T_eL_g\lie
g_0=T_eR_g\lie g_0$ and $\mathsf{G_0}$ has constant rank on $G$.

As a consequence, 
$\mathsf{P_1}$, which is the annihilator of $\mathsf{G_0}$, has also
constant rank on $G$. The following equality follows easily: 
\[\mathsf{P_1}=\mathsf{G_0}^\circ=\left(\lie g_0^L\right)^\circ
=\left(\lie g_0^\circ\right)^L=\lie p_1^L,\]
and we get in the same manner $\mathsf{P_1}=\lie p_1^R$.
\end{proof}
 We have the
immediate corollaries:
\begin{corollary}\label{adG_invariance}
The subspaces $\lie g_0\subseteq \lie g$ and $\lie p_1\subseteq \lie g^*$
satisfy  $\Ad_g^*\lie p_1=\lie p_1$, $\Ad_g\lie g_0=\lie g_0$
for all $g\in G$. Consequently, we have $\ad_x^*\lie p_1\subseteq \lie
p_1$ for all $x\in\lie g$ and $\lie g_0$ is an ideal in $\lie g$.
\end{corollary}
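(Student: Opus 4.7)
The plan is to unpack Proposition \ref{oncardis} using the identification of the adjoint action with a composition of left and right translation derivatives, and then differentiate at the identity to obtain the infinitesimal statements.

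First, I would use the equality $\mathsf{G_0} = \lie g_0^L = \lie g_0^R$ provided by Proposition \ref{oncardis}. Evaluating both invariant extensions at a point $g \in G$ gives $T_eL_g(\lie g_0) = T_eR_g(\lie g_0)$ as subspaces of $T_gG$. Applying the isomorphism $T_gL_{g^{-1}} \colon T_gG \to \lie g$ to both sides and recalling that conjugation satisfies $C_g = L_g \circ R_{g^{-1}}$, so that $\Ad_{g^{-1}} = T_gL_{g^{-1}} \circ T_eR_g$, I obtain the identity $\lie g_0 = \Ad_{g^{-1}}(\lie g_0)$. Substituting $g \mapsto g^{-1}$ yields the desired $\Ad_g\lie g_0 = \lie g_0$ for all $g \in G$.

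Next I would deduce the statement about $\lie p_1$. The cleanest route is to take annihilators: from $\lie p_1 = \lie g_0^\circ$ and the fact that $\Ad_g$ is a linear isomorphism of $\lie g$, one has $(\Ad_g\lie g_0)^\circ = (\Ad_{g^{-1}})^*\lie p_1$. Combining this with $\Ad_g\lie g_0 = \lie g_0$ gives $\Ad_{g^{-1}}^*\lie p_1 = \lie p_1$, and again substituting $g \mapsto g^{-1}$ produces $\Ad_g^*\lie p_1 = \lie p_1$. Alternatively, one could run the same translation-isomorphism argument directly on $\mathsf{P_1} = \lie p_1^L = \lie p_1^R$; both routes are routine.

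Finally, the infinitesimal consequences follow by differentiating at $g=e$: writing $g = \exp(tx)$ and taking $\frac{d}{dt}\big|_{t=0}$ of the identity $\Ad_g\lie g_0 = \lie g_0$ gives $\ad_x\lie g_0 \subseteq \lie g_0$ for every $x \in \lie g$, which is precisely the statement that $\lie g_0$ is an ideal in $\lie g$; the analogous differentiation of $\Ad_g^*\lie p_1 = \lie p_1$ yields $\ad_x^*\lie p_1 \subseteq \lie p_1$. There is no real obstacle here — the entire corollary is essentially a bookkeeping consequence of Proposition \ref{oncardis} once one recognizes that the simultaneous left and right invariance of a subspace of $TG$ (or $T^*G$) is exactly $\Ad$-invariance of its value at $e$.
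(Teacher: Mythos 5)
Your proof is correct and follows essentially the same route as the paper: both arguments extract $\Ad$-invariance of $\lie g_0$ and $\lie p_1$ from the simultaneous left- and right-invariance of $\mathsf{G_0}$ and $\mathsf{P_1}$ established in Proposition \ref{oncardis}, and then differentiate at the identity to get the infinitesimal statements. The only cosmetic difference is that the paper treats $\lie p_1$ first by comparing $(T_gL_{g\inv})^*\xi$ with a right-translated covector and handles $\lie g_0$ ``in the same manner,'' whereas you do $\lie g_0$ first and pass to $\lie p_1$ by taking annihilators; both are fine.
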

\begin{proof}
We have $\mathsf{P_1}=\lie p_1^R=\lie p_1^L$  and 
$\mathsf{G_0}=\lie g_0^R=\lie g_0^L$ by Proposition \ref{oncardis}. 
Then, for all $g\in G$ and $\xi\in\lie p_1$, the 
covector $(T_gL_{g\inv})^*\xi$ is an element of $\mathsf{P_1}(g)$ and  there exists 
$\eta\in\lie p_1$ such that
$(T_gL_{g\inv})^*\xi=(T_gR_{g\inv})^*\eta$. This yields 
$\Ad_{g\inv}^*\xi=\eta\in\lie p_1$ and 
$\lie p_1$ is consequently $\Ad_{g\inv}^*$-invariant for all $g\in G$. 
In the same manner, we show that $\lie g_0$ is $\Ad_g$-invariant for all $g\in G$.

This yields by derivation $\ad_x^*\xi\in\lie p_1$ for all $\xi\in\lie p_1$ and 
$\ad_xz\in\lie g_0$ for all $z\in\lie g_0$ and $x\in\lie g$. The inclusion 
$[\lie g,\lie g_0]\subseteq \lie g_0$ holds then and shows that
$\lie g_0$ is an ideal of $\lie g$.
\end{proof}

If $G$ is a simple Lie group, the ideal $\lie g_0$ is either trivial or equal
to $\lie g$ and we get the following corollary.
\begin{corollary}
If $(G,\mathsf{D}_G)$ is a simple Dirac Lie group, the Dirac structure 
 $\mathsf D_G$ is either the graph of the
vector bundle homomorphism $T^*G\to TG$ induced by a
multiplicative bivector field on $G$, or the trivial tangent Dirac structure 
$\mathsf D_G=TG\operp\{0\}$.
\end{corollary}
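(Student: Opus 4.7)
The plan is to leverage the previous corollary together with simplicity to force a dichotomy on $\lie g_0$, and then show that each alternative corresponds to one of the two cases in the conclusion.

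First, I would invoke Corollary~\ref{adG_invariance}, which tells us that $\lie g_0$ is an ideal of $\lie g$. Since $G$ is simple by hypothesis, $\lie g$ admits no non-trivial ideals, so either $\lie g_0=\{0\}$ or $\lie g_0=\lie g$. By Proposition~\ref{oncardis}, $\mathsf{G_0}=\lie g_0^L$, so the two cases translate into $\mathsf{G_0}=\{0\}$ or $\mathsf{G_0}=TG$ globally.

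In the case $\lie g_0 = \lie g$, we have $\mathsf{G_0}=TG$, i.e.\ $TG\operp\{0\}\subseteq \mathsf D_G$. Since both subbundles of $\mathsf P_G$ have fiber dimension $\dim G$ (the Lagrangian condition forces $\mathsf D_G$ to have rank $\dim G$), they coincide: $\mathsf D_G = TG\operp\{0\}$, which is the trivial tangent Dirac structure.

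In the case $\lie g_0=\{0\}$, we have $\mathsf{G_0}=0$ and, by the identities in Section~\ref{subsection_dirac_structures}, $\mathsf{P_1}=\mathsf{G_0}^\circ = T^*G$. Thus for every $\alpha_g\in T_g^*G$ there exists $v_g\in T_gG$ with $(v_g,\alpha_g)\in\mathsf D_G(g)$; and if $(v_g,\alpha_g),(w_g,\alpha_g)\in\mathsf D_G(g)$ then $(v_g-w_g,0)\in\mathsf D_G(g)$ forces $v_g=w_g$ because $\mathsf{G_0}=0$. This defines a bundle homomorphism $\pi^\sharp:T^*G\to TG$ whose graph is exactly $\mathsf D_G$. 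The Lagrangian condition for $\mathsf D_G$ reads $\beta_g(\pi^\sharp(\alpha_g))+\alpha_g(\pi^\sharp(\beta_g))=0$, so $\pi^\sharp$ is skew and is induced by a well-defined bivector field $\pi\in\Gamma(\bigwedge^2 TG)$. Finally, since $(G,\mathsf D_G)=(G,\mathsf D_\pi)$ is a Dirac Lie group by assumption, Example~\ref{eqDirac-Poisson} forces $\pi$ to be multiplicative.

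There is really no substantial obstacle here: the whole argument rests on combining the ideal property of $\lie g_0$ with simplicity, after which each case becomes an immediate verification from the general facts about $\mathsf{G_0}$, $\mathsf{P_1}$, and the characterization in Example~\ref{eqDirac-Poisson}. The only point that requires a moment's care is checking uniqueness of $v_g$ for given $\alpha_g$ in the first case, which is where the hypothesis $\lie g_0=\{0\}$ is actually used.
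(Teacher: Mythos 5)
Your proposal is correct and follows exactly the paper's (implicit) argument: the paper derives this corollary in one line from the fact that $\lie g_0$ is an ideal (Corollary \ref{adG_invariance}) and simplicity, leaving the two-case verification to the reader, which you have carried out faithfully. The details you supply (the dimension count when $\lie g_0=\lie g$, and the graph/skewness argument plus Example \ref{eqDirac-Poisson} when $\lie g_0=\{0\}$) are precisely the intended ones.
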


We have also the following proposition.
\begin{proposition}\label{De}
Let $(G,\mathsf{D}_G)$ be a Dirac Lie group. Then we have $\mathsf{D}_G(e)=\lie
g_0\operp\lie p_1$. Consequently, the equality
$\alpha_e(Y(e))=0=\beta_e(X(e))$ 
holds for
all sections $(X,\alpha)$ and $(Y,\beta)$ of $\mathsf{D}_G$ defined on a
neighborhood of the neutral element $e$.
\end{proposition}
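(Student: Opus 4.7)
The task splits into the equality $\mathsf D_G(e) = \lie g_0 \operp \lie p_1$ and its consequence about local sections. The strategy is to combine the vector-space structure of the Lagrangian subspace $\mathsf D_G(e)$ with the groupoid structure given by multiplicativity, i.e.\ the fact (used already in the proof of Proposition \ref{oncardis}) that $\mathsf D_G$ is a subgroupoid of the Pontryagin groupoid and hence closed under the inversion map $\mathsf i$.

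Fix $(v_e, \xi_e) \in \mathsf D_G(e)$. Since $\mathsf D_G$ is closed under $\mathsf i$, the element $\mathsf i(v_e, \xi_e)$ lies in $\mathsf D_G(e)$. At $g = e$ the inverse formula reduces to $\mathsf i(v_e, \xi_e) = (-v_e, \xi_e)$, because $L_e = R_e = \operatorname{id}$ and so both $T_e(L_{e^{-1}} R_{e^{-1}})$ and $T_e(L_e R_{e^{-1}})$ are the identity on $T_eG$. Using that $\mathsf D_G(e)$ is a linear subspace, the combination $(v_e, \xi_e) - (-v_e, \xi_e) = (2 v_e, 0)$ lies in $\mathsf D_G(e)$, which yields $v_e \in \lie g_0$. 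The condition $\xi_e \in \lie p_1$ is automatic by definition, so I obtain the inclusion $\mathsf D_G(e) \subseteq \lie g_0 \operp \lie p_1$.

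For the reverse inclusion I will use a dimension count. Evaluating the general identity $\mathsf P_1(m) = \mathsf G_0(m)^\circ$ at $m = e$ yields $\lie p_1 = \lie g_0^\circ$, so
\[
\dim(\lie g_0 \operp \lie p_1) = \dim \lie g_0 + \dim \lie g_0^\circ = \dim G = \dim \mathsf D_G(e),
\]
the last equality by the Lagrangian property of $\mathsf D_G(e)$. Hence equality holds. Alternatively, one may verify the reverse inclusion directly: for $\xi \in \lie p_1$ pick $w$ with $(w, \xi) \in \mathsf D_G(e)$; the inverse trick applied to $(w,\xi)$ gives $w \in \lie g_0$, and then $(0,\xi) = (w, \xi) - (w, 0) \in \mathsf D_G(e)$, which combined with $(v, 0) \in \mathsf D_G(e)$ for $v \in \lie g_0$ yields $(v, \xi) \in \mathsf D_G(e)$.

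The consequence is then immediate: if $(X, \alpha)$ and $(Y, \beta)$ are local sections of $\mathsf D_G$ defined near $e$, evaluation at $e$ gives $X(e), Y(e) \in \lie g_0$ and $\alpha(e), \beta(e) \in \lie p_1 = \lie g_0^\circ$, whence $\alpha(e)(Y(e)) = \beta(e)(X(e)) = 0$. The only genuine content is the one-line inverse-map argument; the rest is a bookkeeping exercise with dimensions and the annihilator identities between $\mathsf G_i$ and $\mathsf P_i$.
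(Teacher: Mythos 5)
Your proof is correct and follows essentially the same route as the paper: both arguments exploit the fact that $\mathsf D_G(e)$ is a linear Lagrangian subspace closed under the groupoid operations at $e$ (the paper uses $(0,\xi)=(x,\xi)^{-1}\star(x,\xi)$ to conclude $\lie p_0=\lie p_1$ and hence the reverse inclusion, whereas you use the inversion map and a dimension count via $\lie p_1=\lie g_0^\circ$, which amounts to the same thing). No gaps.
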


\begin{proof}
Choose $(x,\xi)\in\mathsf D_G(e)$. Then we have $\mathsf
s(x,\xi)=\xi\in\lie g^*$  and hence $\mathsf
u(\xi)=(0,\xi)=(x,\xi)\inv\star(x,\xi)\in\mathsf D_G(e)$. Hence, $(x,0)=(x,\xi)-(0,\xi)$ is
also an element of $\mathsf D_G(e)$ and $x\in \lie g_0$. This shows that
$\mathsf D_G(e)\subseteq \lie g_0\times\lie p_1$ and also $\lie p_1=\lie
p_0$. Because of this last equality, the inclusion
$\lie g_0\times\lie p_1\subseteq \mathsf D_G(e)$ is obvious.
\end{proof}

\begin{lemma}\label{P1exact}
The subbundle
$\mathsf{G_0}\subseteq TG$ is involutive and hence integrable,  and $\mathsf{P_1}$
is spanned by exact one-forms. 
\end{lemma}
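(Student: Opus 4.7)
The plan has two parts, corresponding to the two claims.

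For the involutivity of $\mathsf{G_0}$, I would use Corollary \ref{adG_invariance}, which gives that $\lie g_0$ is an ideal in $\lie g$ and in particular a Lie subalgebra, together with Proposition \ref{oncardis}, which gives $\mathsf{G_0}=\lie g_0^L$. Since left-invariant vector fields satisfy $[x^L,y^L]=[x,y]^L$, the generators $\{x^L\mid x\in\lie g_0\}$ of $\mathsf{G_0}$ are closed under the Lie bracket. A general pair of sections can be written locally as $X=\sum_i f_i x_i^L$ and $Y=\sum_j g_j y_j^L$ with $x_i,y_j\in\lie g_0$, and then the Leibniz rule
\[
[X,Y]=\sum_{i,j}f_ig_j[x_i,y_j]^L+\sum_j X(g_j)\,y_j^L-\sum_i Y(f_i)\,x_i^L
\]
is visibly a section of $\mathsf{G_0}$, since each $[x_i,y_j]\in\lie g_0$ and each $x_i^L, y_j^L\in\Gamma(\mathsf{G_0})$. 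Thus $\mathsf{G_0}$ is involutive. Since $\mathsf{G_0}$ has constant rank by Proposition \ref{oncardis}, the Frobenius theorem applies and gives integrability.

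For the second statement I would use the Frobenius foliated chart coming from the first part. Around any point $g_0\in G$ there exist local coordinates $(x_1,\dots,x_k,y_1,\dots,y_{n-k})$ in which $\mathsf{G_0}$ is spanned by $\partial/\partial x_1,\dots,\partial/\partial x_k$. Its annihilator $\mathsf{G_0}^\circ=\mathsf{P_1}$ (by the equalities of annihilators listed in Section \ref{subsection_dirac_structures}) is then locally spanned by $\dr y_1,\dots,\dr y_{n-k}$, which are exact one-forms. Hence $\mathsf{P_1}$ admits local spanning sections that are exact, as claimed.

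The only delicate point is the first part, and even there the main step is merely checking that the bracket respects the pointwise generating set; everything else is standard Frobenius and annihilator bookkeeping. I do not expect any real obstacle, since the essential input (the ideal property of $\lie g_0$ and the identification $\mathsf{G_0}=\lie g_0^L$) has already been established.
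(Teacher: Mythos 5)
Your proposal is correct and follows essentially the same route as the paper's proof: involutivity of $\mathsf{G_0}=\lie g_0^L$ from the ideal (indeed subalgebra) property of $\lie g_0$ via Proposition \ref{oncardis} and Corollary \ref{adG_invariance}, then Frobenius, then reading off $\mathsf{P_1}=\mathsf{G_0}^\circ$ as the span of the differentials of the transverse coordinates in a foliated chart. The only difference is that you spell out the Leibniz-rule computation for general sections, which the paper leaves implicit.
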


\begin{proof}
Recall first that $\lie p_1^L=\mathsf{P_1}$ and $\lie g_0^L=\mathsf{G_0}$ by
Proposition \ref{oncardis}, and that  both distributions have hence constant
rank on $G$. Since $\lie g_0$ is an
ideal of $\lie g$, we have  in particular
$[\lie g_0,\lie g_0]\subseteq \lie g_0$, and 
$\mathsf{G_0}$ is  thus integrable in the sense of Frobenius.

Any $ g\in G $ lies in a foliated chart domain $ U $ described by
coordinates $(x^1,\dots,x^n)$ such that the first $ k $ among them define the 
local integral manifold of $\mathsf{G_0}$ containing $g$. Thus, for any $ g'\in  U$ 
the basis vector fields $\partial_{x^1},\dots,\partial_{x^k} $ evaluated at $g' $ 
span $\mathsf{G_0}(g') $. Since $\mathsf{P_1}=\mathsf{G_0}^\circ$, the codistribution
$\mathsf{P_1}$ is spanned by $\dr x^{k+1},\ldots,\dr x^n$ on the neighborhood $U$
of $g$.
\end{proof}

\begin{remark}\label{quotient}
Since $\lie g_0$ is an ideal in $\lie g$, the integral leaf $N$ of $\mathsf{G_0}$
through $e\in G$ is a normal subgroup of $G$. If $N$ is in addition
closed in $G$, its (left- or right-) action  on $G$ is proper. 

We will see later that in certain cases (for example when the Dirac Lie group
is integrable), the induced action of $N$ on $(G,\mathsf D_G)$ is canonical. 
Also, since  $\V_N:=\lie g_0^L=\lie g_0^R$ is the vertical space of the action of $N$
on $G$, it is easy to see that $\mathsf D_G\cap\K_N^\perp=\mathsf D_G$ and
this intersection has consequently constant rank on $G$ (recall the paragraph about
regular reduction of symmetric Dirac structures in Section \ref{reduction}). If
$N$ is closed in $G$, we can hence build the quotient $q_N:G\to G/N$ and 
$(G/N,q_N(\mathsf D_G))$ will be shown later to be a Dirac manifold with
$q_N(\mathsf D_G)$ the graph of a multiplicative bivector field on $G/N$. In
particular,
if $(G,\mathsf D_G)$ is integrable, the quotient $(G/N,q_N(\mathsf D_G))$ will be a
Poisson Lie group.
\end{remark}

\medskip 

Consider a Lie group $G$ and $\tilde p:\tilde G\to G$ its universal covering. 
Then there exists
a discrete normal subgroup $\Gamma$ of $\tilde G$ such that $G=\tilde
G/\Gamma$
(see \cite{Knapp02}). The following 
proposition is easy to prove.

\begin{proposition}\label{prop_univ_cover}
Let $\mathsf D_G$ be a multiplicative (integrable) Dirac structure on $G$.
Then the pullback Dirac structure $\tilde{\mathsf{D}_G}:=\tilde p^*\mathsf D_G$ is a (integrable)
multiplicative Dirac structure on $\tilde G$.
\end{proposition}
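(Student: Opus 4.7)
The plan is to exploit that $\tilde p : \tilde G \to G$ is both a Lie group homomorphism and a local diffeomorphism, so that all the relevant structures transport fiberwise via the isomorphisms $T_{\tilde g}\tilde p$.

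First I would verify that $\tilde{\mathsf D_G}$ is a smooth Lagrangian subbundle of $\mathsf P_{\tilde G}$. Since $T_{\tilde g}\tilde p$ is a linear isomorphism at every $\tilde g\in\tilde G$, the general definition of the pullback Dirac structure simplifies to the pointwise description
\[\tilde{\mathsf D_G}(\tilde g)=\bigl\{\bigl((T_{\tilde g}\tilde p)^{-1}v,\,(T_{\tilde g}\tilde p)^*\beta\bigr)\mid (v,\beta)\in\mathsf D_G(\tilde p(\tilde g))\bigr\}.\]
A direct computation shows that the fiberwise isomorphism $(T\tilde p)^{-1}\oplus(T\tilde p)^*:\mathsf P_G\to\mathsf P_{\tilde G}$ preserves the canonical pairing~\eqref{pairing}, so each fiber is Lagrangian of the correct dimension; smoothness follows from smoothness of $\mathsf D_G$ via local inverses of $\tilde p$.

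To prove multiplicativity, the essential identity is $\tilde p\circ\tilde m=m\circ(\tilde p\times\tilde p)$. Given $(v_{\tilde g\tilde h},\alpha_{\tilde g\tilde h})\in\tilde{\mathsf D_G}(\tilde g\tilde h)$, I would push it forward along $T\tilde p$ to the element $(T\tilde p\cdot v,\beta)\in\mathsf D_G(\tilde p(\tilde g)\tilde p(\tilde h))$, where $\alpha=(T\tilde p)^*\beta$. Multiplicativity of $\mathsf D_G$ then supplies $(w,\beta_g)\in\mathsf D_G(\tilde p\tilde g)$ and $(u,\beta_h)\in\mathsf D_G(\tilde p\tilde h)$ satisfying the required tangent and cotangent relations at the projected points. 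Their preimages under $(T\tilde p)^{-1}\oplus(T\tilde p)^*$ lie in $\tilde{\mathsf D_G}(\tilde g)$ and $\tilde{\mathsf D_G}(\tilde h)$; the homomorphism identities $\tilde p\circ L_{\tilde g}=L_{\tilde p\tilde g}\circ\tilde p$ and $\tilde p\circ R_{\tilde h}=R_{\tilde p\tilde h}\circ\tilde p$ transport the tangent equation back using injectivity of $T\tilde p$, and the cotangent relations follow from the dual identity $(T\tilde p)^*(TR_{\tilde p\tilde h})^*=(TR_{\tilde h})^*(T\tilde p)^*$ together with its left-translation counterpart.

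For integrability, I would use that the Courant--Dorfman bracket~\eqref{Courant_bracket} is built from operations---Lie bracket of vector fields, Lie derivative, exterior derivative---that all commute with pullback along local diffeomorphisms. Hence if $(X,\alpha),(Y,\beta)\in\Gamma(\tilde{\mathsf D_G})$ and one restricts to a neighborhood on which $\tilde p$ is a diffeomorphism, they push forward to sections of $\mathsf D_G$ whose bracket lies in $\mathsf D_G$ by hypothesis; pulling back gives $[(X,\alpha),(Y,\beta)]\in\Gamma(\tilde{\mathsf D_G})$. Equivalently, the integrability tensor $T_{\tilde{\mathsf D_G}}$ is the $\tilde p$-pullback of $T_{\mathsf D_G}$ and therefore vanishes. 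The main obstacle is little more than careful bookkeeping of the interaction between $T\tilde p$ and the left/right translations on the cotangent factors in the multiplicativity identities; the covering nature of $\tilde p$ plays no role beyond the fact that $\tilde p$ is a smooth homomorphism which is a local diffeomorphism, so no global information about the discrete subgroup $\Gamma$ ever intervenes.
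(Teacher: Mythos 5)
Your proof is correct. The paper in fact gives no argument at all for this proposition (it is simply declared ``easy to prove''), and the route you take --- transporting $\mathsf D_G$ fiberwise through the isomorphisms $(T\tilde p)^{-1}\oplus(T\tilde p)^*$, which preserve the pairing, then using the homomorphism identities $\tilde p\circ L_{\tilde g}=L_{\tilde p\tilde g}\circ\tilde p$ and $\tilde p\circ R_{\tilde h}=R_{\tilde p\tilde h}\circ\tilde p$ for multiplicativity and the naturality of the Dorfman bracket under local diffeomorphisms for integrability --- is exactly the intended one; your closing observation that only the local-diffeomorphism and homomorphism properties of $\tilde p$ are used is also accurate.
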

\begin{remark}\label{rem_univ_cover}
The integral leaf $\tilde N$ through $e\in\tilde G$ of the characteristic
distribution $\tilde{\mathsf{G_0}}$ defined by $\tilde{\mathsf{D}_G}$ on
$\tilde G$ is normal in $\tilde G$ and hence closed since $\tilde G$ is simply
connected (see \cite{HiNe91}). Hence, the quotient $\tilde G/\tilde N$ is here always well-defined.
\end{remark}
\begin{example}
Let $G$ be a connected Lie group.
 The Lie algebra $\lie g$ of $G$ can be Levi-decomposed as
the semi-direct product $\lie g=\lie s\oplus_\phi\rad \lie g$
with $\lie s$ semi-simple and $\phi:\lie s\to \operatorname{Der}(\rad\lie g)$ a Lie algebra
homomorphism (see for instance \cite{Knapp02}).

The ideal $\rad \lie g$ of $\lie g$ is a solvable ideal of $\lie g$ and its
integral leaf $R$ is closed in $G$ (see \cite{HiNe91}). The quotient $G/R$ is
then a semi-simple Lie group. Let $q_R:G\to G/R$ be the projection and $\pi$ be 
the standard multiplicative
Poisson structure on the semi-simple Lie group $G/R$ (see \cite{EtSc02} 
and \cite{Lu90}).  The pullback $q_R^*\mathsf
 D_\pi$ is an integrable Dirac structure on $G$. 
Its
characteristic distribution is the left or right invariant image of the ideal $\lie
g_0=\rad \lie g$ of $\lie g$ and the action of the integral leaf  $R$ of $\mathsf{G_0}$  on
$(G,q_R^*\mathsf D_\pi)$ is canonical, the Poisson Lie group associated 
to this Dirac Lie group as in Remark \ref{quotient} is obviously $(G/R,\pi)$.
\end{example}
\medskip

The following lemma will be useful for many proofs in this paper. We will
always use the following notation. If $\xi$ is an element of the subspace
$\lie p_1\subseteq \lie g^*$, then the one-form $\xi^L$ is a section of
$\mathsf{P_1}$ by Proposition \ref{oncardis}. We denote by  $X_\xi\in\mx(G)$ a vector field satisfying
$(X_\xi,\xi^L)\in\Gamma(\mathsf D_G)$. The vector field $X_\xi$
 is not
necessarily unique:  all $Y\in
X_\xi+\Gamma(\mathsf{G_0})$ satisfy the condition
$(Y,\xi^L)\in\Gamma(\mathsf D_G)$.

\begin{lemma}\label{too_useful}
Choose $\xi\in\lie p_1$ and corresponding vector fields $X_\xi$ and
$X_{\Ad_{h\inv}^*\xi}$
for $h\in G$.
 Then the inclusion
\begin{equation}\label{eq4}
X_\xi(gh)\quad \in\quad T_hL_gX_\xi(h)+T_gR_hX_{\Ad_{h\inv}^*\xi}(g)+\mathsf{G_0}(gh)
\end{equation}
holds for all  $g\in G$.
\end{lemma}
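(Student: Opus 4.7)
The plan is to apply the forward-Dirac multiplicativity of $\mathsf{D}_G$ to the element $(X_\xi(gh),\xi^L(gh))\in \mathsf{D}_G(gh)$. This produces elements $(w_g,\beta_g)\in\mathsf{D}_G(g)$ and $(u_h,\gamma_h)\in\mathsf{D}_G(h)$ with
\[
T_gR_h w_g+T_hL_g u_h=X_\xi(gh),\qquad \beta_g=(T_gR_h)^*\xi^L(gh),\quad \gamma_h=(T_hL_g)^*\xi^L(gh).
\]
The idea is to recognize $\beta_g$ and $\gamma_h$ as the canonical left-invariant one-forms and then exploit the fact that two sections of $\mathsf{D}_G$ with identical covector component differ, in their vector component, by a section of $\mathsf{G_0}$.

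The first main step is the identification of the one-forms. Using $L_{h^{-1}}=L_{(gh)^{-1}}\circ L_g$ one immediately gets $\gamma_h=(T_hL_{h^{-1}})^*\xi=\xi^L(h)$. For $\beta_g$ one writes $L_{(gh)^{-1}}\circ R_h$ at $x$ as $h^{-1}g^{-1}xh=C_{h^{-1}}(g^{-1}x)$, whence its tangent at $g$ equals $\Ad_{h^{-1}}\circ T_gL_{g^{-1}}$, so that
\[
\beta_g(v)=\xi\bigl(\Ad_{h^{-1}}T_gL_{g^{-1}}v\bigr)=(\Ad_{h^{-1}}^*\xi)^L(g)(v).
\]
This is the computation I expect to be the most delicate, since it is the only place where the adjoint action appears; everything else is bookkeeping.

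The second step uses Proposition \ref{De} together with the Lagrangian property: since both $(w_g,(\Ad_{h^{-1}}^*\xi)^L(g))$ and $(X_{\Ad_{h^{-1}}^*\xi}(g),(\Ad_{h^{-1}}^*\xi)^L(g))$ lie in $\mathsf{D}_G(g)$, their difference lies in $\mathsf{D}_G(g)\cap (TG\operp\{0\})=\mathsf{G_0}(g)\operp\{0\}$. Therefore $w_g=X_{\Ad_{h^{-1}}^*\xi}(g)+z_g$ with $z_g\in\mathsf{G_0}(g)$, and similarly $u_h=X_\xi(h)+y_h$ with $y_h\in\mathsf{G_0}(h)$.

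The final step is to substitute these decompositions into $T_gR_h w_g+T_hL_g u_h=X_\xi(gh)$, yielding
\[
X_\xi(gh)=T_gR_hX_{\Ad_{h^{-1}}^*\xi}(g)+T_hL_gX_\xi(h)+T_gR_h z_g+T_hL_g y_h,
\]
and to observe that the remaining two terms belong to $\mathsf{G_0}(gh)$. This is immediate from Proposition \ref{oncardis}: $\mathsf{G_0}=\lie g_0^L=\lie g_0^R$, so writing $z_g=T_eR_g z_0$ gives $T_gR_hz_g=T_eR_{gh}z_0\in\mathsf{G_0}(gh)$, and analogously $T_hL_g y_h\in\mathsf{G_0}(gh)$. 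This establishes \eqref{eq4}.
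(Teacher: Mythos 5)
Your proof is correct and follows essentially the same route as the paper: apply forward-Dirac multiplicativity to $(X_\xi(gh),\xi^L(gh))$, identify the pulled-back covectors as $(\Ad_{h\inv}^*\xi)^L(g)$ and $\xi^L(h)$, conclude that the resulting vectors differ from $X_{\Ad_{h\inv}^*\xi}(g)$ and $X_\xi(h)$ by elements of $\mathsf{G_0}$, and push these into $\mathsf{G_0}(gh)$ via $\mathsf{G_0}=\lie g_0^L=\lie g_0^R$. The only cosmetic difference is that you spell out the $\Ad$ computation and cite Proposition \ref{De} where the definition of $\mathsf{G_0}$ already suffices.
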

\begin{remark}\label{right_inv_ham}
If $Y_\xi$ and $Y_{\Ad_{h}^*\xi}\in\mx(G)$ are such that 
$(Y_\xi,\xi^R), (Y_{\Ad_{h}^*\xi},(\Ad_{h}^*\xi)^R)\in\Gamma(\mathsf D_G)$,
then we can show in the same manner
\begin{equation*}
Y_\xi(hg)\quad \in\quad T_gL_hY_{\Ad_{h}^*\xi}(g)+T_hR_gY_\xi(h)+\mathsf{G_0}(hg)
\end{equation*} for all  $g\in G$.
\end{remark}
\begin{proof}
Since $(G,\mathsf{D}_G)$ is a Dirac Lie group and
$(X_\xi(gh),\xi^L(gh))\in\mathsf D_G(gh)$, there exist 
$w_g\in T_gG$ and $u_{h}\in T_{h}G$ such that
\[\left(w_g,( T_{g}R_{h})^*\xi^L(gh)\right)
\in\mathsf{D}_G(g),\quad \left(u_{h},(T_{h}L_g)^*\xi^L(gh)\right)
\in\mathsf{D}_G(h)\quad 
\text{ and  }
\quad T_gR_{h}w_g+T_{h}L_gu_{h}=X_\xi(gh).\]
But since 
\[(T_{g}R_{h})^*\xi^L(gh)=(\Ad_{h\inv}^*\xi)^L(g)
\quad
\text{ and }
\quad (T_{h}L_g)^*\xi^L(gh)=\xi^L(h),\]
 we have
\[w_g-X_{ \Ad_{h\inv}^*\xi}(g)\in\mathsf{G_0}(g)
\quad \text{ and }\quad
u_{h}-X_\xi(h)\in\mathsf{G_0}(h).
\] 
With the equalities $\mathsf{G_0}=\lie g_0^L=\lie g_0^R$, this proves the claim.
\end{proof}
\begin{proposition}\label{lemma}
Let $\xi$ and $\eta$ be elements of $\lie p_1$ and
$X_\xi,X_\eta\in\mx(G)$ corresponding   vector 
fields.
The one-form $\ldr{X_\xi}\eta^L-\ip{X_\eta}\dr\xi^L$ is 
left invariant and equal to $\left(\dr_e\!\!\left(\eta^L(X_\xi)\right)\right)^L$.
\end{proposition}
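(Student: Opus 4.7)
The plan is to evaluate the one-form $\omega := \ldr{X_\xi}\eta^L - \ip{X_\eta}\dr\xi^L$ on left-invariant vector fields $y^L$ for $y\in\lie g$ and show that $\omega(y^L)$ is a constant function on $G$ with value $\dr_e(\eta^L(X_\xi))(y)$. Since left-invariant vector fields trivialise $TG$, this simultaneously yields left-invariance of $\omega$ and identifies its value at $e$. First, using Cartan's magic formula, the product rule, and the fact that $\eta^L(y^L)$ and $\xi^L(y^L)$ are constants, I would reduce $\omega(y^L)$ to the compact form $\eta^L([y^L, X_\xi]) + \xi_y^L(X_\eta)$, where $\xi_y\in\lie g^*$ is the covector defined by the left-invariant form $\ldr{y^L}\xi^L = \xi_y^L$. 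The identity $\xi_y = \ad_y^*\xi$ together with Corollary \ref{adG_invariance} places $\xi_y$ in $\lie p_1$, so $X_{\xi_y}$ is a legitimate object.

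Next, the central tool is Lemma \ref{too_useful}, which I would pair with $\eta^L(gh)$ to eliminate the $\mathsf{G_0}$-term (since $\eta^L\in\Gamma(\mathsf{P_1})$ annihilates $\mathsf{G_0}$). A short computation using left-invariance of $\eta^L$ (namely $\eta^L(gh)\circ T_hL_g = \eta^L(h)$ and $\eta^L(gh)\circ T_gR_h = (\Ad_{h\inv}^*\eta)^L(g)$) produces the master identity
$$\eta^L(X_\xi)(gh) = \eta^L(X_\xi)(h) + (\Ad_{h\inv}^*\eta)^L(X_{\Ad_{h\inv}^*\xi})(g).$$
I would then differentiate this in $h$ at $h=e$ along $y$, i.e.\ substitute $h=\exp(ty)$ and take $\frac{d}{dt}\big|_{t=0}$. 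The delicate point is the derivative of $X_{\Ad_{\exp(-ty)}^*\xi}(g)$: because the curve $(X_{\Ad_{\exp(-ty)}^*\xi}, (\Ad_{\exp(-ty)}^*\xi)^L)$ lives in the vector bundle $\mathsf D_G$, its $t$-derivative is again a section of $\mathsf D_G$ lifting $\xi_y^L$, hence equals $X_{\xi_y}$ modulo $\mathsf{G_0}$; the ambiguity is killed upon pairing with $\eta^L$. After cancellation using $y^L(\eta^L(X_\xi)) = \eta_y^L(X_\xi) + \eta^L([y^L, X_\xi])$, this yields
$$\eta^L([y^L, X_\xi])(g) = y^L(\eta^L(X_\xi))(e) + \eta^L(X_{\xi_y})(g).$$

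Substituting this into the simplified formula for $\omega(y^L)(g)$ produces the right-hand side $y^L(\eta^L(X_\xi))(e) + \eta^L(X_{\xi_y})(g) + \xi_y^L(X_\eta)(g)$. The last two terms form exactly the natural pairing $\langle (X_\eta, \eta^L),(X_{\xi_y}, \xi_y^L)\rangle(g)$ of two sections of the Lagrangian subbundle $\mathsf D_G$, and hence vanish identically on $G$. What remains is the constant $y^L(\eta^L(X_\xi))(e) = \dr_e(\eta^L(X_\xi))(y)$, independent of $g$, which gives both conclusions at once. The main obstacle is the differentiation step, where one must carefully justify a smooth choice of $X_{\xi_t}$ along the curve $t\mapsto \Ad_{\exp(-ty)}^*\xi$ and identify its $t$-derivative with $X_{\xi_y}$ modulo $\mathsf{G_0}$; everything else is bookkeeping with Cartan's formula and left-invariance.
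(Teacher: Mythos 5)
Your proposal is correct and follows essentially the same route as the paper's proof: both evaluate the one-form on left-invariant vector fields, reduce to $\eta^L(\ldr{y^L}X_\xi)+(\ad_y^*\xi)^L(X_\eta)$, invoke Lemma \ref{too_useful} paired against $\eta^L$ (which kills the $\mathsf{G_0}$-ambiguity), and use the isotropy of $\mathsf D_G$ to cancel the remaining $g$-dependent terms. The only differences are organizational — you record the identity for $\eta^L(X_\xi)(gh)$ first and then differentiate, whereas the paper differentiates directly inside the computation — and your explicit justification of a smooth choice of $X_{\Ad_{\exp(-ty)}^*\xi}$ along the curve makes precise a point the paper leaves implicit.
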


\begin{proof}
Choose $x\in\lie g$
and, using the preceding Lemma, compute
\begin{align*}
&\left(\ldr{X_{\xi}}\eta^L-\ip{X_\eta}\dr\xi^L\right)(x^L)(g)\\
=&\,X_\xi(\eta^L(x^L))(g)
+\eta^L(\ldr{x^L}X_\xi)(g)
-X_\eta(\xi^L(x^L))(g)+x^L(\xi^L(X_\eta))(g)-\xi^L(\ldr{x^L}X_\eta)(g)\\
=&\,\eta^L(g)\left(\left.\frac{d}{dt}\right\an{t=0}T_{g\exp(tx)}
R_{\exp(-tx)}X_\xi(g\exp(tx))\right)
+(\ldr{x^L}\xi^L)(X_\eta)(g)\\
\overset{\eqref{eq4}}=&\left.\frac{d}{dt}\right\an{t=0}\eta^L(g)(X_{\Ad_{\exp(-tx)}^*\xi}(g))
+\eta\left(\left.\frac{d}{dt}\right\an{t=0}T_{\exp(tx)}R_{\exp(-tx)}X_\xi(\exp(tx))
\right)+(\ad_x^*\xi)^L(X_\eta)(g)\\
=&-\left.\frac{d}{dt}\right\an{t=0}(\Ad_{\exp(-tx)}^*\xi)^L(g)(X_\eta(g))
+\eta(\ldr{x^L}X_\xi(e))+(\ad_x^*\xi)^L(X_\eta)(g)\\
=&-(\ad_x^*\xi)^L(X_\eta)(g)+\ldr{x^L}\left(\eta^L(X_\xi)\right)(e)
-(\ldr{x^L}\eta^L)(X_\xi)(e)+(\ad_x^*\xi)^L(X_\eta)(g)=\,\dr_e\left(\eta^L(X_\xi)\right)(x),
\end{align*}
where we have used  Proposition \ref{De} and
$(\ldr{x^L}\eta^L)=(\ad_x^*\eta)^L\in\Gamma(\mathsf{P_1})$ by
Corollary \ref{adG_invariance}.
\end{proof}

\begin{definition}\label{defbracket}
Let $(G,\mathsf D_G)$ be a Dirac Lie group. Define
the bilinear, antisymmetric  bracket 
\[
[\cdot\,,\cdot]:\lie p_1\times\lie p_1\to\lie g^*
\quad 
\text{ by }
\quad [\xi,\eta]=\dr_e\!\!\left(\eta^L(X_\xi)\right),
\]
 where $X_\xi\in\mx(G)$ is such that  $(X_\xi,\xi^L)\in\Gamma(\mathsf D_G)$. That is, we set the notation
$\ldr{X_{\xi}}\eta^L-\ip{X_\eta}\dr\xi^L=:[\xi,\eta]^L$ and hence
$[(X_\xi,\xi^L),(X_\eta,\eta^L)]=([X_\xi,X_\eta],[\xi,\eta]^L)$ for all
$\xi,\eta\in\lie p_1$.
\end{definition}
Note that $[\xi,\eta]$ does not depend on the choice of the vector field $X_\xi$.
Indeed, if $Y_\xi\in\mx(G)$ is an other  vector field such that $(Y_\xi,\xi^L)\in\Gamma(\mathsf D_G)$, we have
$Y_\xi-X_\xi\in\Gamma(\mathsf{G_0})$ and hence
$\eta^L(X_\xi)=\eta^L((X_\xi-Y_\xi)+Y_\xi))=\eta^L(Y_\xi)$ since
$\eta^L\in\Gamma(\mathsf{P_1})$.

The bilinearity of the bracket is obvious. For the antisymmetry,
choose $\xi,\eta\in\lie p_1$. Then we have $\xi^L(X_\eta)+\eta^L(X_\xi)=0$  since $(X_\xi,\xi^L)$ and 
$(X_\eta,\eta^L)$ are sections of $\mathsf D_G$, and this leads to
\begin{align*}
[\xi,\eta]=\dr_e\left(\eta^L(X_\xi)\right)=\dr_e\left(-\xi^L(X_\eta)\right)
=-[\eta,\xi].
\end{align*}

\medskip

As a direct corollary of
Proposition \ref{lemma}
we recover the fact that every multiplicative Dirac
structure on a torus is trivial.

\begin{corollary}\label{abelianDiracLie}
Consider an \emph{Abelian} Dirac Lie group $(G,\mathsf D_G)$ and 
choose $x$ in the Lie algebra $\lie g$. Then the equality 
\[\left(\eta^L(X_\xi)\right)(g\cdot \exp(tx))
=[\xi,\eta](tx)+\left(\eta^L(X_\xi)\right)(g)
\]
holds for all $g\in G$ and $t\in \R$. 

As a consequence, if $\mathsf
D_{\mathbb{T}^n}$ is a multiplicative Dirac structure on the $n$-torus
$\R^n/\mathbb{Z}^n$, then $\mathsf D_{\mathbb{T}^n}$ is the direct sum 
\[\mathsf D_{\mathbb{T}^n}=\mathsf{G_0}\operp\mathsf{P_1}.
\]
\end{corollary}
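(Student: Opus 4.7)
The plan is to derive the first identity by recognizing $f(g) := \eta^L(g)(X_\xi(g))$ as a solution of Cauchy's additive functional equation, and then to use the compactness of the torus to force the resulting cocycle $[\xi,\eta]$ to vanish.

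First I would apply Lemma \ref{too_useful} with $h = \exp(tx)$: in the abelian setting $\Ad_{h\inv}^*\xi = \xi$ and $T_gR_h = T_gL_h$, so the lemma reduces to
\[ X_\xi(gh) \in T_hL_gX_\xi(h) + T_gL_hX_\xi(g) + \mathsf{G_0}(gh). \]
Pairing both sides with $\eta^L(gh)$ kills the $\mathsf{G_0}$-summand (since $\eta^L \in \Gamma(\mathsf{P_1}) = \Gamma(\mathsf{G_0}^\circ)$), while left-invariance of $\eta^L$ collapses the first two summands to $f(h)$ and $f(g)$ respectively, yielding the additivity $f(gh) = f(g) + f(h)$. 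Hence $\phi(t) := f(\exp(tx))$ satisfies $\phi(s+t) = \phi(s) + \phi(t)$ and, being smooth, is linear: $\phi(t) = t\,\phi'(0)$. Proposition \ref{lemma} identifies $\phi'(0) = \dr_e f(x) = [\xi,\eta](x)$, yielding $\phi(t) = [\xi,\eta](tx)$ and, combined with additivity, the first displayed equality.

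For the torus $\mathbb{T}^n = \R^n/\mathbb{Z}^n$, Proposition \ref{De} gives $X_\xi(e) \in \lie g_0 = \lie p_1^\circ$, so $f(e) = \eta(X_\xi(e)) = 0$. Applied at $g = e$, $t = 1$ to any $x \in \mathbb{Z}^n$, the first identity yields $[\xi,\eta](x) = f(\exp(x)) - f(e) = 0$; by $\R$-linearity and density of $\mathbb{Z}^n$ in $\R^n$, the linear form $[\xi,\eta]$ vanishes identically on $\lie g$. The first identity then reads $f(g\exp(tx)) = f(g)$, and together with $f(e) = 0$ and the surjectivity of $\exp\colon\R^n\to\mathbb{T}^n$ this forces $f \equiv 0$. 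Since this holds for every $\eta \in \lie p_1$ and $\mathsf{P_1} = \lie p_1^L$, the vector field $X_\xi$ annihilates $\mathsf{P_1}$ pointwise, i.e., $X_\xi \in \Gamma(\mathsf{G_0})$. Hence $(0,\xi^L) = (X_\xi,\xi^L) - (X_\xi,0) \in \Gamma(\mathsf D_{\mathbb{T}^n})$, giving $\{0\} \operp \mathsf{P_1} \subseteq \mathsf D_{\mathbb{T}^n}$; combined with the always-valid inclusion $\mathsf{G_0}\operp\{0\} \subseteq \mathsf D_{\mathbb{T}^n}$ and the fibre-dimension equality $\dim\lie g_0 + \dim\lie p_1 = \dim\lie g$, this yields $\mathsf D_{\mathbb{T}^n} = \mathsf{G_0} \operp \mathsf{P_1}$.

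The main technical hurdle I anticipate is pushing the left-invariance cancellations cleanly through the pairing of Lemma \ref{too_useful} against $\eta^L(gh)$ so that the additive law appears in its simplest form; once that is in place, Cauchy's equation and the torus's compactness finish the argument routinely.
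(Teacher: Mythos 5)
Your proof is correct, but the route to the first identity differs from the paper's. The paper works infinitesimally: using the computation in the proof of Proposition \ref{lemma} and the vanishing of $\ad_x^*$ in the Abelian case, it shows $\dr\bigl(\eta^L(X_\xi)\bigr)=[\xi,\eta]^L$, and then integrates the resulting constant derivative along the flow $R_{\exp(tx)}$ of $x^L$. You instead pair Lemma \ref{too_useful} directly against $\eta^L(gh)$ to obtain the global additivity $f(gh)=f(g)+f(h)$ for $f=\eta^L(X_\xi)$ --- i.e.\ that $f$ is a homomorphism $(G,\cdot)\to(\R,+)$, which is exactly the Abelian specialization of the Lie group $1$-cocycle property $C(gh)=C(g)+\Ad_gC(h)$ discussed after Theorem \ref{cocycle} --- and then solve Cauchy's functional equation on one-parameter subgroups, identifying the slope via Definition \ref{defbracket}. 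Your version buys a slightly stronger intermediate statement (additivity for arbitrary $g,h$, not just $h$ on a one-parameter subgroup) and makes the cocycle interpretation transparent; the paper's version is a more routine differentiate-then-integrate argument that stays closer to the machinery already set up in Proposition \ref{lemma}. The torus half of your argument (evaluate at lattice points, conclude $[\xi,\eta]=0$, hence $f\equiv0$ and $X_\xi\in\Gamma(\mathsf{G_0})$) is essentially the paper's, phrased via lattice points rather than via well-definedness of $f$ on the quotient. One small slip: $\mathbb{Z}^n$ is not dense in $\R^n$; what you actually need (and what suffices) is that $\mathbb{Z}^n$ spans $\R^n$ over $\R$, so that a linear form vanishing on it vanishes identically.
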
 

\begin{proof}
We have shown in Proposition \ref{lemma} 
that $\ldr{X_\xi}\eta^L-\ip{X_\eta}\xi^L=[\xi,\eta]^L$ is
a left-invariant one-form on $G$. 
We have for  all $\xi,\eta\in\lie p_1$ and $x\in\lie g$:
\begin{align}
[\xi,\eta]^L(x^L)=&(\ldr{X_\xi}\eta^L-\ip{X_\eta}\dr\xi^L)(x^L)
=\,\eta^L(\ldr{x^L}X_\xi)+(\ldr{x^L}\xi^L)(X_\eta) \quad \text{ (see the proof
  of Proposition \ref{lemma}) } \nonumber\\
=&\, x^L(\eta^L(X_\xi))-(\ldr{x^L}\eta^L)(X_\xi)+(\ldr{x^L}\xi^L)(X_\eta)
\nonumber\\
=&\, x^L(\eta^L(X_\xi))-(\ad_x^*\eta)^L(X_\xi)+(\ad_x^*\xi)^L(X_\eta)
=\, x^L(\eta^L(X_\xi))   \label{eq36}
\end{align} 
since $\ad_x^*\xi=\ad_x^*\eta=0$ 
because
$\lie g$ is Abelian.
We get $\dr(\eta^L(X_\xi))=[\xi,\eta]^L$ and the equality
$\frac{d}{dt}R_{\exp(tx)}^*f
=R_{\exp(tx)}^*\left(\ldr{x^L}f\right)
$ for all $f\in\smoo$
yields
\begin{align*}
\frac{d}{dt}\left(\eta^L(X_\xi)\right)(g\exp(tx))
=R_{\exp(tx)}^*\left(x^L(\eta^L(X_\xi))\right)(g)
\overset{\eqref{eq36}}=R_{\exp(tx)}^*\left([\xi,\eta](x)\right)=[\xi,\eta](x)
\end{align*}
for all $g\in G$ and $t\in \R$. We get
\begin{align*}
\left(\eta^L(X_\xi)\right)(g\exp(tx))=[\xi,\eta](x)\cdot
t+\left(\eta^L(X_\xi)\right)(g)
=[\xi,\eta](tx)+\left(\eta^L(X_\xi)\right)(g).
\end{align*}

On the $n$-dimensional torus $\mathbb T^n$, we have 
$\exp(tx)=tx+\mathbb Z^n$ for all $x\in \lie g=\R^n$ and all $t\in\R$. This
yields
\[\left(\eta^L(X_\xi)\right)(\exp(tx))=
[\xi,\eta](tx)+\left(\eta^L(X_\xi)\right)(0)=[\xi,\eta](tx)
\] 
for all $x\in\R^n$ and all $t\in\R$. But since the function 
$\left(\eta^L(X_\xi)\right)$ is well-defined on $\mathbb T^n=\R^n/\mathbb
Z^n$, the equality
$[\xi,\eta](tx)=[\xi,\eta](tx+z)$ has to hold for all
$x\in\R^n$, $t\in \R$ and $z\in\mathbb Z^n$. This leads to $[\xi,\eta]=0$ 
and hence $\eta^L(X_\xi)$ is constant and equal to its value at the neutral
element; $\eta^L(X_\xi)(0)=\eta(X_\xi(0))=0$ for all $\xi,\eta\in\lie p_1$
by Proposition \ref{De}.
Thus, each spanning vector field $X_\xi$, $\xi\in\lie p_1$, of $\mathsf{G_1}$ 
 is annihilated by
$\mathsf{P_1}$ and is consequently a section of $\mathsf{G_0}$.   
\end{proof}

The next proposition shows that the value of $[\xi,\eta]$, for
$\xi,\eta\in\lie p_1$, can be computed with any two one-forms in
$\Gamma(\mathsf{P_1})$ taking value $\xi,\eta$ in $e$.
\begin{proposition}\label{other_formula_for_the_bracket}
Let $\alpha,\beta\in\Gamma(\mathsf{P_1})$ be  such that
$\alpha(e)=\xi$ and $\beta(e)=\eta\in\lie p_1$. Then we have
$$[\xi,\eta]=\dr_e(\beta(X_\alpha)),
$$
where $X_\alpha\in\mx(G)$ is such that $(X_\alpha,\alpha)\in\Gamma(\mathsf D_G)$.
\end{proposition}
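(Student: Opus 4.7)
My plan is to reduce the claim to the defining formula by writing $\alpha$ and $\beta$ as $C^\infty(G)$-linear combinations of left-invariant spanning sections of $\mathsf{P_1}$ near $e$, and then exploit the key fact from Proposition \ref{De} that such left-invariant sections of $\mathsf{P_1}$ pair trivially at $e$ with the horizontal parts of any Dirac section.

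First I would observe that the value $\dr_e(\beta(X_\alpha))$ does not depend on the ambiguity in the choice of $X_\alpha$. Indeed any other $Y_\alpha$ with $(Y_\alpha,\alpha)\in\Gamma(\mathsf D_G)$ differs from $X_\alpha$ by a section of $\mathsf{G_0}$, and since $\beta\in\Gamma(\mathsf{P_1})=\Gamma(\mathsf{G_0}^\circ)$ (by the orthogonality relations listed in Section \ref{subsection_dirac_structures}), the function $\beta(X_\alpha)$ is unchanged.

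Next, pick bases $\xi_1,\dots,\xi_k$ and $\eta_1,\dots,\eta_k$ of $\lie p_1$ and corresponding vector fields $X_{\xi_i}\in\mx(G)$ with $(X_{\xi_i},\xi_i^L)\in\Gamma(\mathsf D_G)$. By Proposition \ref{oncardis}, $\mathsf{P_1}=\lie p_1^L$ has constant rank and is locally framed by $\xi_1^L,\dots,\xi_k^L$; similarly for $\eta_j^L$. So on a neighborhood of $e$ we may write $\alpha=\sum_i f_i\,\xi_i^L$ and $\beta=\sum_j g_j\,\eta_j^L$ with $f_i,g_j\in C^\infty(G)$, and the hypotheses on $\alpha(e),\beta(e)$ translate into $\sum_i f_i(e)\xi_i=\xi$ and $\sum_j g_j(e)\eta_j=\eta$. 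Since $\mathsf D_G$ is a $C^\infty(G)$-module, $X_\alpha:=\sum_i f_i X_{\xi_i}$ is a legitimate choice satisfying $(X_\alpha,\alpha)\in\Gamma(\mathsf D_G)$.

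Now I would compute $\beta(X_\alpha)=\sum_{i,j} f_i g_j\, \eta_j^L(X_{\xi_i})$ and take $\dr_e$. By Proposition \ref{De}, $\eta_j^L(X_{\xi_i})(e)=\eta_j(X_{\xi_i}(e))=0$, since $(X_{\xi_i}(e),\xi_i)\in\mathsf D_G(e)$ and $(0,\eta_j)\in\mathsf D_G(e)=\lie g_0\operp\lie p_1$. Consequently all terms in the Leibniz expansion of $\dr_e(f_ig_j\eta_j^L(X_{\xi_i}))$ except the one where both $f_i$ and $g_j$ are evaluated at $e$ vanish, leaving
\begin{equation*}
\dr_e(\beta(X_\alpha))=\sum_{i,j} f_i(e)g_j(e)\,\dr_e\!\left(\eta_j^L(X_{\xi_i})\right)=\sum_{i,j} f_i(e)g_j(e)\,[\xi_i,\eta_j],
\end{equation*}
using Definition \ref{defbracket}. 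By the bilinearity of $[\cdot\,,\cdot]$, the right-hand side equals $[\sum_i f_i(e)\xi_i,\sum_j g_j(e)\eta_j]=[\xi,\eta]$, proving the proposition. The only substantive point is that Proposition \ref{De} forces the pointwise vanishing $\eta_j^L(X_{\xi_i})(e)=0$, which is precisely what kills the unwanted non-invariant contributions of the factors $f_i$ and $g_j$ at $e$; everything else is bookkeeping.
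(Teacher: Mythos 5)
Your proof is correct, but it follows a genuinely different route from the paper's. The paper proves the statement by a chain of identities involving Lie derivatives along right-invariant vector fields: it first establishes the auxiliary fact that $\ldr{x^R}\alpha\in\Gamma(\mathsf{P_1})$ for every $\alpha\in\Gamma(\mathsf{P_1})$ (because $\mathsf{P_1}$ is left-invariant and the flow of $x^R$ is left translation), and then passes from $x^R(\eta^L(X_\xi))(e)$ to $\dr_e(\beta(X_\alpha))(x)$ by repeatedly integrating by parts against $\ldr{x^R}$ and invoking Proposition \ref{De} to discard the pairings that vanish at $e$. You instead expand $\alpha$ and $\beta$ in the global left-invariant frame of $\mathsf{P_1}=\lie p_1^L$ guaranteed by Proposition \ref{oncardis}, take $X_\alpha=\sum_i f_iX_{\xi_i}$, and observe that the Leibniz expansion of $\dr_e\bigl(f_ig_j\,\eta_j^L(X_{\xi_i})\bigr)$ collapses because $\eta_j^L(X_{\xi_i})(e)=0$ by Proposition \ref{De}; bilinearity of the bracket then finishes the argument. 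Both proofs hinge on the same vanishing statement $\mathsf D_G(e)=\lie g_0\operp\lie p_1$, but your "first-order Taylor expansion at a common zero" argument is more elementary and makes the tensorial nature of the formula transparent, whereas the paper's computation produces as a by-product the invariance of $\Gamma(\mathsf{P_1})$ under $\ldr{x^R}$, which is of independent use. All the individual steps you take (independence of the choice of $X_\alpha$ because two choices differ by a section of $\mathsf{G_0}=\mathsf{P_1}^\circ$; the fact that $\bigl(\sum_if_iX_{\xi_i},\sum_if_i\xi_i^L\bigr)$ is again a section of the subbundle $\mathsf D_G$; the identification $\sum_if_i(e)\xi_i=\xi$) are justified, so no gap remains.
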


\begin{proof}
First, we show that $\ldr{x^R}\alpha\in\Gamma(\mathsf{P_1})$ for 
all $\alpha\in\Gamma(\mathsf{P_1})$. For all $g,h\in G$ we have
$(L_g^*\alpha)(h)=\alpha_{gh}\circ T_{h}L_g=(T_hL_g)^*\alpha_{gh}$. 
This is an element of 
$\mathsf{P_1}(h)$ since $\alpha_{gh}\in\mathsf{P_1}(gh)$ and
$\mathsf{P_1}$ is left-invariant by Proposition 
\ref{oncardis}. Thus, we get
$$(\ldr{x^R}\alpha)(h)=\left.\frac{d}{dt}\right\an{t=0}(L_{\exp(tx)}^*\alpha)(h)
\in\mathsf{P_1}(h)
$$
since $(L_{\exp(tx)}^*\alpha)(h)\in\mathsf{P_1}(h)$ for all $t$.
Choose $x\in\lie g$ and compute
\begin{align*}
[\xi,\eta](x)&=x^R(\eta^L(X_\xi))(e)
=\eta(\ldr{x^R}X_\xi(e))=\beta(e)(\ldr{x^R}X_\xi(e))
=x^R(\beta(X_\xi))-(\ldr{x^R}\beta)(X_\xi)(e)=-x^R(\xi^L(X_\beta))(e)\\
&=-\xi(\ldr{x^R}X_{\beta}(e))
=-\alpha(e)(\ldr{x^R}X_{\beta}(e))=-\ldr{x^R}(\alpha(X_\beta))(e)+(\ldr{x^R}\alpha)(X_{\beta})(e) 
=\dr_e(\beta(X_\alpha))(x)
\end{align*}
In the fifth and ninth
 equalities, we have used the fact
that $\ldr{x^R}\alpha,\ldr{x^R}\beta\in\Gamma(\mathsf{P_1})$ and Proposition \ref{De}.
\end{proof}

\bigskip

The next lemma holds for integrable Dirac Lie groups, and is in general not
true if the Dirac Lie group $(G,\mathsf D_G)$ is not integrable, as shows the
example following it. Recall that $N$ is the normal subgroup of $G$
defined by the integral leaf through $e$ of the integrable subbundle
$\mathsf{G_0}\subseteq TG$.
\begin{lemma}\label{lemma_on_action_of_G_0}
If $(G,\mathsf{D}_G)$ is integrable, then we have 
$(\ldr{x^L}X,\ldr{x^L}\alpha)$ and
$(\ldr{x^R}X,\ldr{x^R}\alpha)\in\Gamma(\mathsf{D}_G)$ 
for all $x\in\lie
g_0$ and $(X,\alpha)\in\Gamma(\mathsf{D}_G)$, and  the pairs
$(R_n^*X,R_n^*\alpha)$ and $(L_n^*X,L_n^*\alpha)$ are also elements of 
$\Gamma(\mathsf{D}_G)$ for all $n\in N$.
\end{lemma}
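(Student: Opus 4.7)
The plan is to exploit integrability to upgrade the elementary fact that $(x^L,0)$ and $(x^R,0)$ are sections of $\mathsf D_G$ whenever $x\in\lie g_0$ into both infinitesimal invariance (the Lie derivative statement) and finite invariance (the action of $N$).

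\textbf{Step 1 (setup).} By Proposition \ref{oncardis} we have $\mathsf{G_0}=\lie g_0^L=\lie g_0^R$, so for every $x\in\lie g_0$ both $(x^L,0)$ and $(x^R,0)$ are sections of $\mathsf D_G$.

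\textbf{Step 2 (Lie derivative invariance).} Fix $(X,\alpha)\in\Gamma(\mathsf D_G)$ and apply the Courant--Dorfman bracket \eqref{Courant_bracket} with the first argument $(x^L,0)$:
\[
[(x^L,0),(X,\alpha)]=\bigl([x^L,X],\,\ldr{x^L}\alpha-\ip{X}\dr 0\bigr)=\bigl(\ldr{x^L}X,\ldr{x^L}\alpha\bigr).
\]
By integrability of $\mathsf D_G$ this pair lies in $\Gamma(\mathsf D_G)$. The identical computation with $x^R$ gives $(\ldr{x^R}X,\ldr{x^R}\alpha)\in\Gamma(\mathsf D_G)$.

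\textbf{Step 3 (from infinitesimal to finite).} The flow of $x^L$ on $G$ is right multiplication by $\exp(tx)$, so $R_{\exp(tx)}^{*}=\phi_t^{*}$. Near any point $m\in G$ choose a local frame $(e_1,\dots,e_r)$ of $\mathsf D_G$. By Step 2 there are smooth functions $a_{ij}$ with $\ldr{x^L}e_i=\sum_j a_{ij}e_j$. For $(X,\alpha)\in\Gamma(\mathsf D_G)$ the curve $f(t):=(\phi_t^{*}(X,\alpha))(m)$ satisfies $f'(t)=(\phi_t^{*}\ldr{x^L}(X,\alpha))(m)$, which, written in the frame $(e_i(m))$, is a linear ODE with coefficients $a_{ij}(\phi_t(m))$. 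The curve starting with the same initial condition but built as a linear combination $\sum_j M_{ij}(t)e_j(m)$ using the fundamental matrix of the ODE stays in $\mathsf D_G(m)$, so by uniqueness $f(t)\in\mathsf D_G(m)$ for all $t$. This shows $R_{\exp(tx)}^{*}$ preserves $\Gamma(\mathsf D_G)$ for every $x\in\lie g_0$.

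\textbf{Step 4 (conclusion for $N$).} The integral leaf $N$ through $e$ is a connected Lie group with Lie algebra $\lie g_0$, hence every $n\in N$ is a finite product $\exp(x_1)\cdots\exp(x_k)$ with $x_i\in\lie g_0$, and therefore $R_n^{*}$ preserves $\Gamma(\mathsf D_G)$. The statement for $L_n^{*}$ follows by the same argument using $x^R$ (whose flow is left multiplication by $\exp(tx)$) and $\mathsf{G_0}=\lie g_0^R$.

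The only genuinely delicate point is the integration in Step 3; everything else reduces to unpacking definitions. The ODE trick in a local frame of $\mathsf D_G$ circumvents the absence of real-analyticity that would make a Taylor series argument work directly.
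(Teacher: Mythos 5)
Your proof is correct and follows the same overall route as the paper: both start from $\mathsf{G_0}=\lie g_0^L=\lie g_0^R$ (Proposition \ref{oncardis}), use integrability to get $[(x^L,0),(X,\alpha)]=(\ldr{x^L}X,\ldr{x^L}\alpha)\in\Gamma(\mathsf D_G)$, integrate the flow $R_{\exp(tx)}$ of $x^L$, and conclude via the fact that $N$ is generated by the elements $\exp(tx)$, $x\in\lie g_0$. The one genuine difference is your Step 3: the paper simply cites an external result (that an integrable Dirac structure is conserved along the flow of any section of $\mathsf{G_0}$), whereas you supply a self-contained fundamental-matrix argument, which makes the lemma independent of that reference. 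One small imprecision there: as written, the ODE for $f(t)=(\phi_t^{*}(X,\alpha))(m)$ is not closed, since expressing $(\phi_t^{*}\ldr{x^L}(X,\alpha))(m)$ in the frame $(e_i(m))$ already presupposes that pullbacks of sections of $\mathsf D_G$ land in $\mathsf D_G(m)$. The clean version of your own argument is to set $F_i(t):=(\phi_t^{*}e_i)(m)$ and observe that $F_i'(t)=\sum_j a_{ij}(\phi_t(m))F_j(t)$ is a closed linear system whose solution stays in $\mathsf D_G(m)$ by uniqueness; writing $(X,\alpha)=\sum_i c_i e_i$ then gives $\phi_t^{*}(X,\alpha)=\sum_i(c_i\circ\phi_t)\,\phi_t^{*}e_i\in\Gamma(\mathsf D_G)$. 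With that rearrangement (and the standard remark that the local-in-$t$ statement globalizes via the group property of the flow), your proof is complete.
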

\begin{proof} 
The right and left invariant vector fields $x^R$ and $x^L$ defined on $G$ by
an element of $\lie g_0$ are sections of $\mathsf{G_0}$ since we have shown in
Proposition \ref{oncardis}
that $\mathsf{G_0}=\lie g_0^R=\lie g_0^L$.
If $(G,\mathsf D_G)$ is integrable, we have 
$\left[(x^L,0),(X,\alpha)\right]=\left(\ldr{x^L}X,\ldr{x^L}\alpha\right)$ and 
$\left[(x^R,0),(X,\alpha)\right]=\left(\ldr{x^R}X,\ldr{x^R}\alpha\right)\in\Gamma(\mathsf{D}_G)$
for all $(X,\alpha)\in\Gamma(\mathsf D_G)$.

In \cite{JoRa10b}, it is proved that 
 an integrable Dirac structure $\mathsf D$  is conserved along the
flow of the vector fields  $X\in\Gamma(\mathsf{G_0})$.

For each $x\in\lie g_0$, the flow of $x^L$ is $R_{\exp(tx)}$ and the flow 
of $x^R$ is $L_{\exp(tx)}$. Thus, we have \linebreak
$(R_{\exp(tx)}^*X,R_{\exp(tx)}^*\alpha)$ and
$(L_{\exp(tx)}^*X,L_{\exp(tx)}^*\alpha)\in
\Gamma(\mathsf D_G)$. This yields the claim since $N$ is generated as a
group by the elements $\exp(tx)$, $x\in\lie g_0$ and small $t$.
\end{proof}
\begin{example}\label{counter_ex_action_of_N}
Consider the  Dirac structure $\mathsf D_{\R^3}$ defined on the Lie group
$\R^3$ as the 
span of the sections
\[ \left(\partial_z,0\right),\quad
\left(z\partial_x,\dr y\right),\quad
\left(-z\partial_y,\dr x\right)
\]
of $\mathsf P_{\R^3}$. It is easy to show that    $(\R^3,\mathsf D_{\R^3})$ is
a Dirac Lie group (see also Corollary
\ref{abelianDiracLie} for a description of the multiplicative 
Dirac structures on $\R^n$). It is not integrable because, for instance, the bracket
of $(\partial_z,0)$ and $(z\partial_x,\dr y)$ is equal to 
$(\partial_x,0)$, which is not a section of $\mathsf D_{\R^3}$.
The Dirac structure is obviously not invariant under the action of 
$N=\{(0,0)\}\times \R$ on $\R^3$.
\end{example}

The following theorem shows how to decide if the action of $N$ on 
$(G,\mathsf D_G)$ is canonical.
\begin{theorem}\label{action_of_N}
The Dirac Lie group $(G,\mathsf D_G)$ is $N$-invariant if and only if the bracket
$[\cdot,\cdot]$ defined in Definition \ref{defbracket} has image in $\lie p_1$.
\end{theorem}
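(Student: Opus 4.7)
The approach is to test $N$-invariance of $\mathsf{D}_G$ on a set of spanning sections, namely the pairs $(Y,0)$ with $Y\in\Gamma(\mathsf{G_0})$ together with the pairs $(X_\xi,\xi^L)$ for $\xi\in\lie p_1$. Since $\lie g_0$ is an ideal in $\lie g$ (Corollary \ref{adG_invariance}), for $z\in\lie g_0$ the vector field $z^L$ is a section of $\mathsf{G_0}$ and $\ldr{z^L}Y$ remains in $\Gamma(\mathsf{G_0})$, so the pairs $(Y,0)$ cause no problem. The content of the theorem is therefore that $N$-invariance is equivalent to $\bigl(\ldr{z^L}X_\xi,\ldr{z^L}\xi^L\bigr)\in\Gamma(\mathsf{D}_G)$ for every $z\in\lie g_0$ and $\xi\in\lie p_1$. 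Since $\ldr{z^L}\xi^L=(\ad_z^*\xi)^L$ by the standard Lie-derivative identity on left-invariant forms, and $\ad_z^*\xi\in\lie p_1$ when $z\in\lie g_0$ (by the ideal property of $\lie g_0$ together with $\lie p_1=\lie g_0^\circ$), one may introduce $X_{\ad_z^*\xi}$ and reformulate the condition as the requirement that $\ldr{z^L}X_\xi-X_{\ad_z^*\xi}$ be a section of $\mathsf{G_0}=\mathsf{P_1}^\circ$, i.e.\ $\eta^L\bigl(\ldr{z^L}X_\xi-X_{\ad_z^*\xi}\bigr)=0$ for every $\eta\in\lie p_1$.

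The central technical step will be to derive the identity
\[
\eta^L(g)\bigl(\ldr{z^L}X_\xi(g)-X_{\ad_z^*\xi}(g)\bigr)=[\xi,\eta](z)
\]
valid for all $g\in G$, $z\in\lie g$, and $\xi,\eta\in\lie p_1$. I will obtain it by applying Lemma \ref{too_useful} with $h=\exp(tz)$ and pairing with $\eta^L$ (which annihilates the $\mathsf{G_0}$-correction) to get the exact equality
\[
\eta^L(g\exp(tz))\bigl(X_\xi(g\exp(tz))\bigr)=\eta^L(\exp(tz))\bigl(X_\xi(\exp(tz))\bigr)+(\Ad_{\exp(-tz)}^*\eta)^L(g)\bigl(X_{\Ad_{\exp(-tz)}^*\xi}(g)\bigr),
\]
and then differentiating at $t=0$: the left-hand side becomes $\ldr{z^L}(\eta^L(X_\xi))(g)$, the first right-hand term becomes $\dr_e(\eta^L(X_\xi))(z)=[\xi,\eta](z)$ by Definition \ref{defbracket}, and the second, using $\tfrac{d}{dt}\big|_{t=0}\Ad_{\exp(-tz)}^*\xi=\ad_z^*\xi$ in the paper's convention, becomes $(\ad_z^*\eta)^L(X_\xi)(g)+\eta^L(X_{\ad_z^*\xi})(g)$. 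Solving for $\eta^L(\ldr{z^L}X_\xi)$ via the derivation identity $\ldr{z^L}(\eta^L(X_\xi))=(\ad_z^*\eta)^L(X_\xi)+\eta^L(\ldr{z^L}X_\xi)$ yields the displayed formula. The ambiguity of $X_\xi$ modulo $\Gamma(\mathsf{G_0})$ is harmless because we always pair against $\eta^L\in\Gamma(\mathsf{P_1})=\Gamma(\mathsf{G_0})^\circ$, which makes every intermediate expression unambiguous.

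Given the identity, both implications are immediate. For the ``if'' direction: if $[\xi,\eta]\in\lie p_1=\lie g_0^\circ$ for all $\xi,\eta\in\lie p_1$, then the right-hand side vanishes whenever $z\in\lie g_0$, so $\ldr{z^L}X_\xi-X_{\ad_z^*\xi}\in\Gamma(\mathsf{G_0})$ and $\mathsf{D}_G$ is preserved by the flow $R_{\exp(tz)}$ of $z^L$, hence by right multiplication by $N$; invariance under left multiplication follows identically by using Remark \ref{right_inv_ham} and the flow $L_{\exp(tz)}$ of $z^R$. For the ``only if'' direction: if $\mathsf{D}_G$ is $N$-invariant then $(\ldr{z^L}X_\xi,(\ad_z^*\xi)^L)\in\Gamma(\mathsf{D}_G)$, and subtracting $(X_{\ad_z^*\xi},(\ad_z^*\xi)^L)\in\Gamma(\mathsf{D}_G)$ gives $\ldr{z^L}X_\xi-X_{\ad_z^*\xi}\in\Gamma(\mathsf{G_0})$, which through the identity forces $[\xi,\eta](z)=0$ for every $z\in\lie g_0$, i.e.\ $[\xi,\eta]\in\lie p_1$.

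The main obstacle is the bookkeeping in the differentiation step, in particular keeping the signs consistent with the convention $\ad_z^*\xi(y)=\xi([y,z])$ used throughout the paper, and verifying that the $t$-derivative of the family $X_{\Ad_{\exp(-tz)}^*\xi}(g)$ only becomes meaningful after pairing with $\eta^L\in\mathsf{P_1}$. Once this is sorted out, the equivalence collapses to a single line.
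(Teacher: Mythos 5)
Your proposal is correct and follows essentially the same route as the paper: your central identity $\eta^L\bigl(\ldr{z^L}X_\xi-X_{\ad_z^*\xi}\bigr)=[\xi,\eta](z)$ is exactly the paper's Lemma \ref{lie_der_of_Xmu} (equation \eqref{left_inv_der_of_Xmu}) paired against $\eta^L$, which the paper obtains from Lemma \ref{too_useful} via Proposition \ref{lemma} just as you re-derive it, and both arguments then reduce $N$-invariance to the infinitesimal condition on the spanning sections $(X_\xi,\xi^L)$ and recover the group statement from the flow of $z^L$ and the fact that $N$ is generated by exponentials. The only cosmetic difference is that the paper carries out the flow-invariance step explicitly by showing the pairing $\langle R_{\exp(tz)}^*(X_\xi,\xi^L),(X_\eta,\eta^L)\rangle$ has identically vanishing $t$-derivative, where you invoke the standard Lie-derivative criterion.
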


\begin{example}
Consider again Example \ref{counter_ex_action_of_N}. The bracket
on $\lie p_1=\erz\{\dr x(0),\dr y(0)\}$ is given by 
$[\dr y(0),\dr x(0)]=\dr_0\left((\dr x)(z\partial_x)\right)=\dr
z(0)\not\in\lie p_1$.
\end{example}

For the proof of this theorem, we need to introduce a new notation and show a lemma,
that will also be useful in the following.
\begin{definition}
Choose $\xi\in\lie p_1$ and $x\in\lie g$. Then the elements
\[\ad_x^*\xi\in\lie p_1\qquad\text{ and }\qquad\ad_\xi^*x\in\lie p_1^*
=\lie g/\lie g_0
\]
are defined by 
$(\ad_x^*\xi)(y)=\xi([y,x])$ for all $y\in\lie g$, and 
$(\ad_\xi^*x)(\eta)=[\eta,\xi](x)$ for all $\eta\in\lie p_1$.
\end{definition}
Note that $\ad_x^*\xi$ is an element of $\lie p_1$ by Corollary \ref{adG_invariance}.

\begin{lemma}\label{lie_der_of_Xmu} 
Choose $\xi\in\lie p_1$ and $X_\xi\in\mx(G)$ such that
$(X_\xi,\xi^L)\in\Gamma(\mathsf{D}_G)$. Then we have for all $x\in\lie g$:
\[(\ldr{x^L}X_\xi)(e)+\lie g_0
=-\ad_\xi^*x\,\in \lie g/\lie g_0
\] and consequently
 \begin{equation}\label{left_inv_der_of_Xmu}
\ldr{x^L}X_\xi\in (-\ad_\xi^*x)^L+X_{\ad_x^*\xi}+\Gamma(\mathsf{G_0})
\end{equation}
for all $g\in G$.
\end{lemma}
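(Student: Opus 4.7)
The plan is to prove the coset identity at $e$ first, then extend it to arbitrary $g\in G$ by transporting the multiplicativity relation of Lemma \ref{too_useful} (with $h=\exp(tx)$) and differentiating at $t=0$. Because $\lie p_1=\lie g_0^\circ$, an element of $\lie g/\lie g_0\cong\lie p_1^*$ is determined by its pairings with all $\eta\in\lie p_1$, so the first statement amounts to computing $\eta((\ldr{x^L}X_\xi)(e))$ for every $\eta\in\lie p_1$.

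For the first identity, I would apply Cartan's formula together with the identity $\ldr{x^L}\eta^L=(\ad_x^*\eta)^L$ to write $\eta^L(\ldr{x^L}X_\xi)=\ldr{x^L}(\eta^L(X_\xi))-(\ad_x^*\eta)^L(X_\xi)$. Evaluating at $e$, the second summand vanishes, because $X_\xi(e)\in\lie g_0$ by Proposition \ref{De} while $\ad_x^*\eta\in\lie p_1=\lie g_0^\circ$ by Corollary \ref{adG_invariance}; and the first summand reduces to $\dr_e(\eta^L(X_\xi))(x)=[\xi,\eta](x)$ by Definition \ref{defbracket}, which antisymmetry rewrites as $-[\eta,\xi](x)=-(\ad_\xi^*x)(\eta)$. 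Letting $\eta$ range over $\lie p_1$ yields $(\ldr{x^L}X_\xi)(e)+\lie g_0=-\ad_\xi^*x$.

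For the second identity I would set $h=\exp(tx)$ in Lemma \ref{too_useful}, apply $T_{g\exp(tx)}R_{\exp(-tx)}$ to both sides, and differentiate at $t=0$; the LHS then yields $(\ldr{x^L}X_\xi)(g)$. Using the commutation $R_{\exp(-tx)}\circ L_g=L_g\circ R_{\exp(-tx)}$, the left-translation summand on the RHS becomes $T_eL_g\bigl(T_{\exp(tx)}R_{\exp(-tx)}X_\xi(\exp(tx))\bigr)$, whose $t$-derivative at $0$ is $T_eL_g(\ldr{x^L}X_\xi)(e)\in(-\ad_\xi^*x)^L(g)+\mathsf{G_0}(g)$ by the first step. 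The right-translation summand is $X_{\Ad_{\exp(-tx)}^*\xi}(g)$, which can be chosen smoothly in $t$ since $\mathsf D_G\to\mathsf P_1$ is a surjective morphism of vector bundles; denoting the resulting smooth curve $c(t)$, the pair $(c(t),(\Ad_{\exp(-tx)}^*\xi)^L(g))$ lies in the linear fiber $\mathsf D_G(g)$, so its derivative $(\dot c(0),(\ad_x^*\xi)^L(g))$ at $0$ (using $\tfrac{d}{dt}(\Ad_{\exp(-tx)}^*\xi)|_{t=0}=\ad_x^*\xi$ in the paper's sign conventions) is in $\mathsf D_G(g)$ too; subtracting the section $(X_{\ad_x^*\xi}(g),(\ad_x^*\xi)^L(g))\in\mathsf D_G(g)$ then yields $\dot c(0)-X_{\ad_x^*\xi}(g)\in\mathsf{G_0}(g)$. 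Finally, the $\mathsf{G_0}$-residue from Lemma \ref{too_useful} is preserved under $TR_{\exp(-tx)}$ because $\mathsf{G_0}=\lie g_0^R$ is right-invariant, so its derivative at $t=0$ remains in $\mathsf{G_0}(g)$. Assembling these three contributions gives \eqref{left_inv_der_of_Xmu}.

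The main obstacle is the combined sign/smoothness bookkeeping in the second step: the paper's convention $\ad_x^*\xi(y):=\xi([y,x])$ together with its $\Ad^*$-convention forces the identification $\tfrac{d}{dt}(\Ad_{\exp(-tx)}^*\xi)|_{t=0}=\ad_x^*\xi$ (not the naive opposite sign), and one has to justify the smooth-in-$t$ choice of $X_{\Ad_{\exp(-tx)}^*\xi}(g)$, which follows from local smoothness of sections of the surjective vector bundle map $\mathsf D_G\to\mathsf P_1$.
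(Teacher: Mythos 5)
Your proof is correct and follows essentially the same route as the paper: the identity at $e$ is obtained exactly as in the paper's proof (Cartan's formula, $\ldr{x^L}\eta^L=(\ad_x^*\eta)^L$, Proposition \ref{De}, Definition \ref{defbracket} and antisymmetry), and the global statement \eqref{left_inv_der_of_Xmu} is likewise derived by differentiating Lemma \ref{too_useful} with $h=\exp(tx)$. The only cosmetic difference is that the paper organizes the second step by pairing $\ldr{x^L}X_\xi$ against all $\eta^L$ and concluding via $\mathsf{G_0}=\mathsf{P_1}^\circ$, whereas you differentiate the vector-valued inclusion directly and track each summand modulo $\mathsf{G_0}(g)$; your sign convention $\left.\tfrac{d}{dt}\right|_{t=0}\Ad_{\exp(-tx)}^*\xi=\ad_x^*\xi$ and the smooth choice of $X_{\Ad_{\exp(-tx)}^*\xi}(g)$ are both handled correctly.
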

\begin{proof}
Choose $\eta\in\lie p_1$ and compute
\[\eta((\ldr{x^L}X_\xi)(e))
=\ldr{x^L}(\eta^L(X_\xi))(e)-(\ldr{x^L}\eta^L)(X_\xi)(e)
=[\xi,\eta](x)-0=\eta\,(-\ad_\xi^*x).
\]
This yields the first equality.
Using this and the proof of Proposition \ref{lemma}, we get:
\begin{align*}
\eta^L(\ldr{x^L}X_\xi)
=&-(\ad_{x}^*\xi)^L(X_\eta)+\eta\left(\ldr{x^L}X_\xi(e)\right)
=\,\eta^L\left(X_{\ad_x^*\xi}\right)+\eta^L\left((-\ad_\xi^*x)^L\right)
=\,\eta^L\left(X_{\ad_x^*\xi}+(-\ad_\xi^*x)^L\right).
\end{align*}
Since the left-invariant one-forms $\eta^L$, for all $\eta\in\lie p_1$, span 
$\Gamma(\mathsf{P_1})$ as a $\smoo$-module,\linebreak
 we have 
$\alpha\left(\ldr{x^L}X_\xi\right)=\alpha\left(X_{\ad_x^*\xi}-(\ad_\xi^*x)^L\right)
$
for all $\alpha\in\Gamma(\mathsf{P_1})$, and hence we are done because
$\mathsf{G_0}=\mathsf{P_1}^\circ$.
\end{proof}
\begin{remark}\label{remark_on_lieder_by_g0}
Note that if $x$ is an element of $\lie g_0$, we have 
$(\ldr{x^L}X_\xi,\ldr{x^L}\xi^L)=[(x^L,0),(X_\xi,\xi^L)]\in\Gamma(\mathsf{D}_G)$
if $\mathsf D_G$ is integrable. Since $x$ lies in the ideal $\lie g_0$ and $\xi\in\lie p_1=\lie g_0^\circ$, 
we have
$\ad_x^*\xi=0$, thus $\ldr{x^L}\xi^L=(\ad_x^*\xi)^L=0$ and we get
$\ldr{x^L}X_\xi\in\Gamma(\mathsf{G_0})$.

With Lemma \ref{lie_der_of_Xmu}, we can show that this is true without the
assumption that $\mathsf D_G$ is integrable; we need only the hypothesis that
the bracket on $\lie p_1$ has image in $\lie p_1$. 
We have then 
\[\ldr{x^L}X_\xi\in X_{\ad_x^*\xi}-(\ad_\xi^*x)^L+\Gamma(\mathsf{G_0})
=X_0-(\ad_\xi^*x)^L+\Gamma(\mathsf{G_0})=\Gamma(\mathsf{G_0}).
\] The vector field 
$X_0$ is indeed an element of  $\Gamma(\mathsf{G_0})$ by definition,  and for all
$\eta\in\lie p_1$, we have $(\ad_\xi^*x)(\eta)=[\eta,\xi](x)=0$ since
$[\eta,\xi]\in\lie p_1$ and $x\in\lie g_0$,
which shows that
$\ad_\xi^*x$ is trivial in $\lie g/\lie g_0$ and thus $(\ad_\xi^*x)^L\in\Gamma(\mathsf{G_0})$.
\end{remark}

\begin{proof}[of Theorem \ref{action_of_N}]
If the right action of $N$ on $(G,\mathsf D_G)$ is canonical, we have 
$(R_n^*X_\xi,R_n^*\xi^L)\in\Gamma(\mathsf D_G)$ for all $n\in N$ and
$\xi\in\lie p_1$. This yields
$(\ldr{x^L}X_\xi,\ldr{x^L}\xi^L)\in\Gamma(\mathsf D_G)$
for all $x\in\lie g_0$. Since $\mathsf D_G(e)=\lie g_0\times\lie p_1$, we get
$\ldr{x^L}X_\xi(e)\in\lie g_0$.
Hence, we have 
$[\xi,\eta](x)=x^L(\eta^L(X_\xi))(e)=(\ad_x^*\eta)(X_\xi(e))+\eta(\ldr{x^L}X_\xi(e))=0
$ for all $\xi,\eta\in\lie p_1$ and $x\in\lie g_0$ and consequently
 $[\xi,\eta]\in\lie p_1$.

Conversely, if $[\xi,\eta]\in\lie p_1$ for all $\xi,\eta\in\lie
p_1$,
we get $\ldr{x^L}X_\xi\in\Gamma(\mathsf{G_0})$ by
Remark \ref{remark_on_lieder_by_g0}.
Hence, recalling that $\ad_x^*\xi=0$ for $x\in\lie g_0$ and $\xi\in\lie p_1$,
we can compute
\begin{align*}
\frac{d}{dt}\left\langle
  (R_{\exp(tx)}^*X_\xi,R_{\exp(tx)}^*\xi^L),(X_\eta,\eta^L)\right\rangle(g)
&=\frac{d}{dt}\left(\eta^L(g)(R_{\exp(tx)}^*X_\xi)(g)+(R_{\exp(tx)}^*\xi^L)(g)(X_\eta(g))\right)\\
&=\eta^L\left(R_{\exp(tx)}^*(\ldr{x^L}X_\xi)\right)(g)+(R_{\exp(tx)}^*(\ad_x^*\xi)^L)(X_\eta)(g)\\
&=\eta\circ T_gL_{g\inv}\circ
T_{g\exp(tx)}R_{\exp(-tx)}\left(\ldr{x^L}X_\xi\right)(g\exp(tx))\\
&=(\Ad_{\exp(tx)}^*\eta)^L\left(\ldr{x^L}X_\xi\right)(g\exp(tx))=0
\end{align*}
since $\Ad_{\exp(tx)}^*\eta\in\lie p_1$ and
$\ldr{x^L}X_\xi\in\Gamma(\mathsf{G_0})$. But this yields 
\begin{align*}
\left\langle
  (R_{\exp(tx)}^*X_\xi,R_{\exp(tx)}^*\xi^L),(X_\eta,\eta^L)\right\rangle(g)&=
\left\langle
  (R_{\exp(0\cdot x)}^*X_\xi,R_{\exp(0\cdot
    x)}^*\xi^L),(X_\eta,\eta^L)\right\rangle(g)=\left\langle
  (X_\xi,\xi^L),(X_\eta,\eta^L)\right\rangle(g)=0
\end{align*}
for all $t\in\R$, which shows that $(R_{\exp(tx)}^*X_\xi,R_{\exp(tx)}^*\xi^L)$
is a section of $\mathsf{D}_G$ for all $t\in \R$. Hence, since $N$ is
generated as a group by the elements $\exp(tx)$, for $x\in\lie g_0$ and small
$t\in\R$, the proof is finished.
\end{proof}

The following theorem will be useful in the next section about integrable Dirac
Lie groups.
\begin{theorem}\label{cocycle}
The equality
\begin{align}
[\xi,\eta]([x,y])=&(\ad_y^*\xi)(\ad_{\eta}^*x)-(\ad_x^*\xi)(\ad_{\eta}^*y)
\label{bracket_of_bracket}+(\ad_x^*\eta)(\ad_{\xi}^*y)-(\ad_y^*\eta)(\ad_{\xi}^*x)
\end{align}
holds for all $\xi, \eta\in\lie p_1$ and $x,y\in\lie g$. 
\end{theorem}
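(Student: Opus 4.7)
The plan is to unfold $[\xi,\eta]([x,y])$ starting from the definition $[\xi,\eta] = \dr_e(\eta^L(X_\xi))$ and using the identity $[x,y]^L = [x^L,y^L]$ for left-invariant vector fields, which gives
\[
[\xi,\eta]([x,y]) = [x,y]^L\bigl(\eta^L(X_\xi)\bigr)(e) = x^L\bigl(y^L(\eta^L(X_\xi))\bigr)(e) - y^L\bigl(x^L(\eta^L(X_\xi))\bigr)(e).
\]
The whole proof then reduces to expressing the function $y^L(\eta^L(X_\xi))$ in a form where the outer derivative $x^L(\cdot)(e)$ can be read off as a sum of two brackets, after which one antisymmetrizes in $x,y$.

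Applying the Leibniz rule I obtain
\[
y^L(\eta^L(X_\xi)) = (\ldr{y^L}\eta^L)(X_\xi) + \eta^L(\ldr{y^L}X_\xi) = (\ad_y^*\eta)^L(X_\xi) + \eta^L(\ldr{y^L}X_\xi),
\]
where $\ldr{y^L}\eta^L = (\ad_y^*\eta)^L$ is a section of $\mathsf{P_1}$ by Corollary \ref{adG_invariance}. Lemma \ref{lie_der_of_Xmu} gives $\ldr{y^L}X_\xi \in X_{\ad_y^*\xi} - (\ad_\xi^*y)^L + \Gamma(\mathsf{G_0})$, where $(\ad_\xi^*y)^L$ uses any lift of $\ad_\xi^*y \in \lie g/\lie g_0$ to $\lie g$. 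Since $\eta^L \in \Gamma(\mathsf{P_1}) = \Gamma(\mathsf{G_0}^\circ)$, the $\Gamma(\mathsf{G_0})$ summand is killed and $\eta^L((\ad_\xi^*y)^L)$ is the constant $\eta(\ad_\xi^*y) = [\eta,\xi](y)$. Hence
\[
y^L(\eta^L(X_\xi)) = (\ad_y^*\eta)^L(X_\xi) + \eta^L(X_{\ad_y^*\xi}) - [\eta,\xi](y).
\]
Applying $x^L$ at $e$ annihilates the constant and, by the definition of the bracket together with Proposition \ref{other_formula_for_the_bracket} (which allows the bracket to be represented by any section of $\mathsf{P_1}$ paired with any vector field completing it into a section of $\mathsf{D}_G$), the two remaining terms yield
\[
x^L(y^L(\eta^L(X_\xi)))(e) = [\xi, \ad_y^*\eta](x) + [\ad_y^*\xi, \eta](x).
\]
Subtracting the analogous formula with $x$ and $y$ exchanged gives
\[
[\xi,\eta]([x,y]) = [\xi, \ad_y^*\eta](x) + [\ad_y^*\xi, \eta](x) - [\xi, \ad_x^*\eta](y) - [\ad_x^*\xi, \eta](y).
\]

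The remaining task is purely algebraic: from the definition $(\ad_\beta^*z)(\alpha) = [\alpha,\beta](z)$ for $\alpha,\beta \in \lie p_1$ and the antisymmetry of the bracket on $\lie p_1$ one gets $[\alpha,\beta](z) = \alpha(\ad_\beta^*z) = -\beta(\ad_\alpha^*z)$. Applying these identities to each of the four terms above converts them into exactly the four summands on the right-hand side of \eqref{bracket_of_bracket}. The main obstacle in the argument is not conceptual but bookkeeping: one must verify that the ambiguities in the lift of $\ad_\xi^*y \in \lie g/\lie g_0$ to $\lie g$ and in the choice of $X_{\ad_y^*\xi}$ (unique only modulo $\Gamma(\mathsf{G_0})$) never affect the outcome, which is the case because each such quantity only ever appears paired against a section of $\mathsf{P_1} = \mathsf{G_0}^\circ$.
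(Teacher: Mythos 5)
Your proof is correct and follows essentially the same route as the paper's: expand $[x,y]^L=\ldr{x^L}\ldr{y^L}-\ldr{y^L}\ldr{x^L}$ on the function $\eta^L(X_\xi)$, use the Leibniz rule together with Lemma \ref{lie_der_of_Xmu}, and convert via the definitions of $\ad^*$ and antisymmetry. The only (harmless) difference is bookkeeping — you recognize the surviving terms directly as $[\xi,\ad_y^*\eta](x)+[\ad_y^*\xi,\eta](x)$ via Definition \ref{defbracket}, where the paper expands a second Leibniz step into eight terms and kills four using $\mathsf D_G(e)=\lie g_0\times\lie p_1$; both arrive at the same intermediate formula.
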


\begin{proof}
By Definition \ref{defbracket}, we have
$[\xi,\eta]([x,y])=[x,y]^L\left( \eta^L(X_\xi)\right)(e)
$
for any $x,y\in\lie g$ and $\xi,\eta\in\lie p_1$.  
Hence we can compute
\begin{align*}
[\xi,\eta]([x,y])
=&[x,y]^L\left( \eta^L(X_\xi)\right)(e)
=\ldr{x^L}\ldr{y^L}\left(\eta^L(X_\xi)\right)(e)
-\ldr{y^L}\ldr{x^L}\left(\eta^L(X_\xi)\right)(e)\\
=&\ldr{x^L}\left(\ldr{y^L}\eta^L(X_\xi)+\eta^L(\ldr{y^L}X_\xi)
\right)(e)
-\ldr{y^L}\left(\ldr{x^L}\eta^L(X_\xi)+\eta^L(\ldr{x^L}X_\xi)
\right)(e)\\
=&\ldr{x^L}\left((\ad_y^*\eta)^L(X_\xi)+\eta^L(-\ad_\xi^*y)^L
+\eta^L\left(X_{\ad_y^*\xi}\right)
\right)(e)\\
&-\ldr{y^L}\left((\ad_x^*\eta)^L(X_\xi)+\eta^L(-\ad_\xi^*x)^L+\eta^L\left(X_{\ad_x^*\xi}\right)
\right)(e).
\end{align*}
Since $\eta^L(-\ad_\xi^*y)^L$ and $\eta^L(-\ad_\xi^*x)^L$ are constant
functions on $G$, we get hence:
\begin{align*}
[\xi,\eta]([x,y])
=&\ldr{x^L}\left((\ad_y^*\eta)^L(X_\xi)+\eta^L\left(X_{\ad_y^*\xi}\right)
\right)(e)-\ldr{y^L}\left((\ad_x\eta)^L(X_\xi)+\eta^L\left(X_{\ad_x^*\xi}\right)
\right)(e)\\
=&\Bigl(\ldr{x^L}(\ad_y^*\eta)^L(X_\xi)+(\ad_y^*\eta)^L(\ldr{x^L}X_\xi)+
(\ldr{x^L}\eta^L)\left(X_{\ad_y^*\xi}\right)+\eta^L\left(\ldr{x^L}X_{\ad_y^*\xi}
\right)\Bigr)(e)\\
&-\Bigl(\ldr{y^L}(\ad_x^*\eta)^L(X_\xi)+(\ad_x^*\eta)^L(\ldr{y^L}X_\xi)+
(\ldr{y^L}\eta^L)\left(X_{\ad_x^*\xi}\right)+\eta^L\left(\ldr{y^L}X_{\ad_x^*\xi}
\right)\Bigr)(e)\\
=&(\ad_x^*\ad_y^*\eta)(X_\xi(e))-(\ad_y^*\eta)(\ad_\xi^*x)+
(\ad_x^*\eta)\left(X_{\ad_y^*\xi}(e)\right)-\eta\left(\ad_ {\ad_y^*\xi}^*x
\right)\\
&-(\ad_y^*\ad_x^*\eta)(X_\xi(e))+(\ad_x^*\eta)(\ad_\xi^*y)
-(\ad_y^*\eta)\left(X_{\ad_x^*\xi}(e)\right)+\eta\left(\ad_ {\ad_x^*\xi}^*y
\right).
\end{align*}
Since  $\mathsf{D}_G(e)=\lie g_0\times\lie p_1$ and $\lie p_1$ is
$\ad_x^*$-invariant  for all $x\in \lie g$, 
the first,
third, fifth and seventh terms of this sum vanish. Thus, we get
\begin{align*}
[\xi,\eta]([x,y])&=-(\ad_y^*\eta)(\ad_\xi^*x)
-\eta\left(\ad_ {\ad_y^*\xi}^*x
\right)+(\ad_x^*\eta)(\ad_\xi^*y)+\eta\left(\ad_ {\ad_x^*\xi}^*y
\right)\\
&=-(\ad_y^*\eta)(\ad_\xi^*x)+[\ad_y^*\xi,\eta](x)
+(\ad_x^*\eta)(\ad_\xi^*y)-[\ad_x^*\xi,\eta](y)\\
&=-(\ad_y^*\eta)(\ad_\xi^*x)+(\ad_y^*\xi)(\ad_\eta^*x)
+(\ad_x^*\eta)(\ad_\xi^*y)-(\ad_x^*\xi)(\ad_\eta^*y).
\end{align*}
\end{proof}
\begin{remark}
Equation \eqref{bracket_of_bracket} is equivalent to either one of the
following equations for all $x,y\in\lie g$ and $\xi,\eta\in\lie p_1$:
\begin{equation}\label{ppart}
\ad_x^*([\xi,\eta])=[\ad_x^*\xi,\eta]-[\ad_x^*\eta,\xi]+\ad_{\ad_\eta^*x}^*\xi
-\ad_{\ad_\xi^*x}^*\eta,
\end{equation}
\begin{equation}\label{gpart}
\ad_\xi^*([x,y])
=[\ad_\xi^*x,y]-[\ad_\xi^*y,x]-\ad_{\ad_x^*\xi}^*y+\ad_{\ad_y^*\xi}^*x
\,\,\in\,\,\lie p_1^*=\lie g/\lie g_0
\end{equation}
\end{remark}

Let $(G,\mathsf D_G)$ be a Dirac Lie group. Then the space $\bigwedge^2\lie
g/\lie g_0$ is a $G$-module via $$g\cdot \left((x+\lie g_0)\wedge(y+\lie g_0)\right)
=(\Ad_gx+\lie g_0)\wedge(\Ad_gy+\lie g_0)$$ by Proposition
\ref{oncardis}, and by derivation, it is a $\lie g$-module 
via
$$z\cdot \left((x+\lie g_0)\wedge(y+\lie g_0)\right)
=([z,x]+\lie g_0)\wedge(y+\lie g_0)+(x+\lie g_0)\wedge([z,y]+\lie g_0).$$
Theorem \ref{cocycle} states then that the map $\delta:\lie g\to \bigwedge^2\lie
g/\lie g_0$ defined as the dual map of $[\cdot,\cdot]:\bigwedge^2 \lie
p_1\to\lie g^*$ is a Lie algebra $1$-cocycle,
 that is, we have
\[\delta([x,y])=x\cdot\delta(y)-y\cdot\delta(x)
\]
for all $x,y\in\lie g$. Hence, we can associate to each Dirac Lie group
$(G,\mathsf D_G)$ an ideal $\lie g_0$ and a Lie algebra $1$-cocycle $\delta:\lie g\to \bigwedge^2\lie
g/\lie g_0$.
If $(G,\mathsf D_G)$ is a Dirac Lie group, then the map 
$C:G\to \bigwedge^2\lie
g/\lie g_0$ defined by $C(g)(\xi,\eta)=\eta^R(Y_\xi)(g)=-\xi^R(Y_\eta)(g)$, where $Y_\xi\in\mx(G)$
is a vector field satisfying $(Y_\xi,\xi^R)\in\Gamma(\mathsf D_G)$, is a Lie group $1$-cocycle, i.e., it satisfies 
$C(gh)=C(g)+\Ad_gC(h)$ for all $g,h\in G$. The proof of this uses Remark
\ref{right_inv_ham}.
We have $C(e)=0\in\lie g/\lie g_0\wedge\lie g/\lie g_0$ by Proposition 
\ref{De}  and
$$(\dr_eC)(x)(\xi,\eta)=\left.\frac{d}{dt}\right\an{t=0}C(\exp(tx))(\xi,\eta)
=\left.\frac{d}{dt}\right\an{t=0}\eta^R(Y_\xi)(\exp(tx))
\overset{\text{Prop. }\ref{other_formula_for_the_bracket}}=[\xi,\eta](x)=\delta(x)(\xi,\eta)
$$ for all $\xi,\eta\in\lie p_1$.

Note that if $G$ is connected and $C:G\to\bigwedge^2\lie
g/\lie g_0$ is a Lie group $1$-cocycle integrating $\delta$, that is, with
$C(e)=0$ and
$\dr_eC=\delta$, then $C$ is unique (see \cite{Lu90}) and $\mathsf D_G$ is consequently given on $G$ by
\begin{equation}\label{cocycle_defines_Dirac}
\mathsf D_G(g)=\left\{(T_eR_g(C(g)^\sharp(\xi)+x),\xi^R(g))\mid \xi\in\lie p_1,
x\in\lie g_0\right\}
\end{equation} 
for all $g\in G$, where $C(g)^\sharp:\lie g/\lie g_0\to\lie g$ is defined as follows.
Choose a vector subspace $W\subseteq \lie g$ such that
$\lie g=\lie g_0\oplus W$, then we have an isomorphism $\phi_W:W\to\lie g/\lie g_0$,
$w\mapsto w+\lie g_0$. Set $C(g)^\sharp(\xi)=\phi_W\inv(C(g)(\xi,\cdot))$ for all
$\xi\in\lie p_1=\lie g_0^\circ$. Note that  by definition,
\eqref{cocycle_defines_Dirac} does not depend on the choice
of $W$.

Conversely, let $G$ be a connected and simply connected Lie group and $\lie
g_0$ an ideal in $\lie g$. Choose a Lie algebra $1$-cocycle $\delta:\lie g\to\bigwedge^2\lie
g/\lie g_0$. Then there exists a unique Lie group $1$-cocycle $C:G\to   \bigwedge^2\lie
g/\lie g_0$ integrating $\delta$ (see \cite{DuZu05}). Define 
$\mathsf D_G\subseteq \mathsf P_G$ by
\eqref{cocycle_defines_Dirac}. Then it is easy to check that
$\mathsf D_G$ is a multiplicative Dirac structure on $G$. We have shown the
following theorem.
\begin{theorem}
Let $G$ be  a connected and simply connected Lie group with Lie algebra $\lie g$.
Then we have a one-to-one correspondence
\[\left\{
\left(\lie g_0,\,\delta:\lie g\to \bigwedge{}\!^2\lie
g/\lie g_0\right)\left|\begin{array}{c}
\lie g_0\subseteq \lie g \text{ ideal },\\
\delta \text{ cocycle }
\end{array}\right.\right\}
\overset{1:1}\leftrightarrow \left\{
\begin{array}{c}
\text{ multiplicative Dirac structures }\\
\text{ on $G$
}\end{array}
\right\}.
\]
\end{theorem}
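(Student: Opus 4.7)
The plan is to assemble the two maps already constructed in the discussion preceding the theorem into mutually inverse bijections and then verify the remaining routine details.

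For the forward map $\mathsf D_G\mapsto(\lie g_0,\delta)$, set $\lie g_0:=\mathsf{G_0}(e)$ and let $\delta:\lie g\to\bigwedge^2\lie g/\lie g_0$ be the dual of the bracket $[\cdot\,,\cdot]:\bigwedge^2\lie p_1\to\lie g^*$ of Definition \ref{defbracket}. Corollary \ref{adG_invariance} gives that $\lie g_0$ is an ideal of $\lie g$, while the cocycle identity $\delta([x,y])=x\cdot\delta(y)-y\cdot\delta(x)$ is exactly Theorem \ref{cocycle} reformulated via \eqref{ppart}--\eqref{gpart}.

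For the backward map $(\lie g_0,\delta)\mapsto\mathsf D_G$, invoke the standard integration theorem for Lie algebra $1$-cocycles on a connected and simply connected Lie group (as in \cite{DuZu05}) to obtain the unique Lie group $1$-cocycle $C:G\to\bigwedge^2\lie g/\lie g_0$ with $C(e)=0$ and $\dr_eC=\delta$, and define $\mathsf D_G$ by \eqref{cocycle_defines_Dirac}. The routine verifications I would perform are: independence of the chosen complement $W$ (changing $W$ alters $C(g)^\sharp(\xi)$ by an element of $\lie g_0$ that is already absorbed by the free $x$-term); smoothness and correct rank (the fibres have dimension $\dim\lie g_0+\dim\lie p_1=\dim\lie g$); isotropy (from antisymmetry of $C(g)\in\bigwedge^2\lie g/\lie g_0$ together with $\lie p_1=\lie g_0^\circ$ annihilating $\lie g_0$); and multiplicativity, which I would check by composing an element of $\mathsf D_G(g)$ with one of $\mathsf D_G(h)$ having matching $\star$-source and target, using $T_hL_g\circ T_eR_h=T_eR_{gh}\circ\Ad_g$ and the $\Ad$-invariance of $\lie g_0$ from Corollary \ref{adG_invariance}, so that the subgroupoid condition reduces to the cocycle identity $C(gh)=C(g)+\Ad_gC(h)$ evaluated on $\xi\in\lie p_1$.

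Bijectivity then follows from the recovery statements already in place. Starting from $(\lie g_0,\delta)$, building $\mathsf D_G$ via $C$, and then re-extracting the characteristic ideal at $e$ returns $\lie g_0$ (since $C(e)=0$ forces $\mathsf D_G(e)=\lie g_0\operp\lie p_1$), while Proposition \ref{other_formula_for_the_bracket} together with $\dr_eC=\delta$ returns $\delta$. Conversely, starting from $\mathsf D_G$, extracting $(\lie g_0,\delta)$, and rebuilding: the group cocycle $C(g)(\xi,\eta)=\eta^R(Y_\xi)(g)$ read off from $\mathsf D_G$ satisfies $C(e)=0$ by Proposition \ref{De} and $\dr_eC=\delta$ by Proposition \ref{other_formula_for_the_bracket}, so by the uniqueness provided by simple connectedness it agrees with the cocycle used in \eqref{cocycle_defines_Dirac}; that formula then reproduces the original $\mathsf D_G$. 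The main obstacle is computational rather than conceptual, namely the multiplicativity check, which demands careful tracking of the $\Ad$-conventions relating the group-cocycle equation for $C$ to the composition law $\star$ in the Pontryagin groupoid; simple connectedness is essential both for existence and uniqueness of $C$ and cannot be dropped, since over a non-simply-connected $G$ one would need to control the periods of $\delta$ along the fundamental group.
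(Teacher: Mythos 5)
Your proposal is correct and follows essentially the same route as the paper: the forward map extracts $\lie g_0=\mathsf{G_0}(e)$ and the cocycle $\delta$ dual to the bracket of Definition \ref{defbracket} (via Corollary \ref{adG_invariance} and Theorem \ref{cocycle}), the backward map integrates $\delta$ to the unique group cocycle $C$ and uses \eqref{cocycle_defines_Dirac}, and bijectivity rests on the uniqueness of $C$ and Propositions \ref{De} and \ref{other_formula_for_the_bracket}, exactly as in the discussion the paper summarizes by this theorem. The only minor imprecision is that uniqueness of $C$ requires only connectedness (simple connectedness is needed for existence), which does not affect the argument.
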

We will see in the next subsection that the \emph{integrable}
 multiplicative Dirac structures on $G$ correspond via this
bijection to the pairs $\left(\lie g_0,\,\delta:\lie g\to \bigwedge^2\lie
g/\lie g_0\right)$ such that the dual $[\cdot,\cdot]:=\delta^*:\bigwedge^2\lie
g_0^\circ\to
\lie g^*$ defines a Lie bracket on $\lie g_0^\circ=:\lie p_1$.

\begin{example}
\begin{enumerate}
\item Let $G=\R^n$. Then any vector subspace $V\subseteq \R^n\simeq T_0\R^n$ is an
ideal
in $\lie g=\R^n$ and any $\delta: \R^n\to \bigwedge ^2\R^n/V$ is a Lie algebra $1$-cocycle
since the cocycle condition is trivial in this particular case. The Lie group $1$-cocycle $C$ integrating $\delta$
is then the unique linear map $C:\R^n\to \bigwedge
^2\R^n/V$ with $\dr_0 C=\delta$, that is, $C$ is equal to $\delta$ if we
identify $G=\R^n$ with $\lie g=T_0\R^n$ via the exponential map. This shows
that each multiplicative Dirac structure on $\R^n$ is given by
\[\mathsf D_{\R^n}(r)=\left\{(\delta(r)^\sharp(\xi)+x),\xi)\mid \xi\in V^\circ,
x\in V\right\}\subseteq T_r\R^n\times T_r^*\R^n,
\]
with $V$ a vector subspace of $\R^n$, $\delta:\R^n\to \bigwedge^2 \R^n/V$ a linear 
map and $\delta(r)^\sharp$ defined as in \eqref{cocycle_defines_Dirac} with
a complement $W$ of $V$ in $\R^n$.
\item Let $G\subseteq \operatorname{GL}_n(\R)$ be the set of upper triangular
matrices with non-vanishing determinant. The Lie algebra $\lie g$ of $G$ is then the 
set of upper triangular matrices. Its commutator $\lie g_0:=[\lie g,\lie g]$ 
is the set of strictly 
upper triangular matrices, and integrates to the normal subgroup $N\subseteq G$ 
of upper triangular matrices with
all  entries  on the diagonal equal to $1$.
 Note that $G$ is not connected. The connected component
of the neutral element $e\in G$ is the set of upper triangular matrices with
strictly positive diagonal entries. 

The quotient $\lie g/\lie g_0$ is isomorphic to the set of diagonal matrices 
in $\lie g$. Since $\lie g_0=[\lie g,\lie g]$, it is easy to see that the 
cocycle condition is satisfied for a linear
map $\delta:\lie g\to\bigwedge^2\lie g/\lie g_0$ if and only if $\delta\an{\lie g_0}$
vanishes, that is, if and only if $\delta$ factors to
$\bar\delta:\lie g/\lie g_0\to\bigwedge^2\lie g/\lie g_0$. 
In other words, the dual map $[\cdot\,,\cdot]$ of a Lie algebra $1$-cocycle $\delta$
 has necessarily image in $\lie p_1:=\lie g_0^\circ$. 
Hence, if $C:G\to\bigwedge^2\lie g/\lie g_0$ is the Lie algebra $1$-cocycle associated to 
a multiplicative Dirac structure $\mathsf D_G$ on $G$, then the dual of its
derivative at $e$ has image in $\lie p_1$, that is, the bracket on $\lie p_1$
defined in Definition \ref{defbracket} has automatically image in $\lie p_1$.
Since $N$ is closed in $G$, this shows by Remark \ref{quotient} 
and Theorem \ref{action_of_N}
that any multiplicative Dirac structure on $G$ 
with $\lie g_0=[\lie g, \lie g]$ 
 is automatically the pullback
to $G$ under $q:G\to G/N$ of the graph of
a multiplicative bivector field on $G/N$.

More generally, 
this result holds for any Dirac Lie group $(G,\mathsf D_G)$ such that
$\lie g_0=[\lie g,\lie g]$. 
\end{enumerate}
\end{example}
\subsection{Integrable Dirac Lie groups: induced Lie bialgebra}
In continuation of the results in the preceding subsection, we can show that
 the integrability of $(G,\mathsf D_G)$  depends only on the properties of the
bracket defined in Definition \ref{defbracket}.
\begin{theorem}\label{liealgebra}
The Dirac Lie group $(G,\mathsf{D}_G)$ is integrable if and only if the bracket
 $[\cdot\,,\cdot]$ on $\lie p_1\times\lie p_1$ defined in Definition
\ref{defbracket} is a Lie bracket on $\lie p_1$.
\end{theorem}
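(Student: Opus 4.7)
The plan is to reduce integrability of $\mathsf D_G$ to the pointwise vanishing of the tensor $T_{\mathsf D_G}$ on a manageable spanning set of $\Gamma(\mathsf D_G)$. By Proposition \ref{oncardis} and Lemma \ref{P1exact}, $\Gamma(\mathsf D_G)$ is locally spanned as a $C^\infty(G)$-module by the sections $(z^L,0)$ for $z\in\lie g_0$ together with one representative $(X_\xi,\xi^L)$ for each $\xi\in\lie p_1$. Since $T_{\mathsf D_G}$ is a tensor, it suffices to check its vanishing on all triples of such generators.

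The forward direction is short. Assume $\mathsf D_G$ is integrable. By Proposition \ref{lemma}, the Dorfman bracket $[(X_\xi,\xi^L),(X_\eta,\eta^L)]=([X_\xi,X_\eta],[\xi,\eta]^L)$ lies in $\Gamma(\mathsf D_G)$, so its second component must belong to $\Gamma(\mathsf{P_1})$, whence $[\xi,\eta]\in\lie p_1$. The Courant bracket then restricts to a Lie bracket on $\Gamma(\mathsf D_G)$; applying its Jacobi identity to three sections $(X_{\xi_i},\xi_i^L)$ and reading off the left-invariant cotangent component at $e$ produces the Jacobi identity for $[\cdot\,,\cdot]$ on $\lie p_1$.

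For the converse, assume $[\cdot\,,\cdot]$ is a Lie bracket on $\lie p_1$. Its image-in-$\lie p_1$ property together with Theorem \ref{action_of_N} shows that $N$ acts canonically on $(G,\mathsf D_G)$, and by Remark \ref{remark_on_lieder_by_g0} we have $\ldr{z^L}X_\xi\in\Gamma(\mathsf{G_0})$ for $z\in\lie g_0$, $\xi\in\lie p_1$. With these tools, any triple of generators containing at least one section of the form $(z^L,0)$ produces, after computing the Dorfman bracket, pairings between $\Gamma(\mathsf{G_0})$ and $\Gamma(\mathsf{P_1})$, which vanish by $\mathsf{G_0}=\mathsf{P_1}^\circ$ and the involutivity of $\mathsf{G_0}$ (Lemma \ref{P1exact}). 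Only the case of a triple $\bigl((X_{\xi_i},\xi_i^L)\bigr)_{i=1,2,3}$ remains.

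The main obstacle is this last case. Proposition \ref{lemma} gives
\[T_{\mathsf D_G}\bigl((X_\xi,\xi^L),(X_\eta,\eta^L),(X_\zeta,\zeta^L)\bigr)=\zeta^L([X_\xi,X_\eta])+[\xi,\eta]^L(X_\zeta),\]
and choosing $X_{[\xi,\eta]}$ with $(X_{[\xi,\eta]},[\xi,\eta]^L)\in\Gamma(\mathsf D_G)$ (which exists since $[\xi,\eta]\in\lie p_1$), the Lagrangian identity $\langle(X_\zeta,\zeta^L),(X_{[\xi,\eta]},[\xi,\eta]^L)\rangle=0$ rewrites the tensor as $\zeta^L([X_\xi,X_\eta]-X_{[\xi,\eta]})$. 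Its vanishing for every $\zeta\in\lie p_1$ is therefore equivalent to $([X_\xi,X_\eta],[\xi,\eta]^L)\in\Gamma(\mathsf D_G)$. I plan to establish this in two steps: first verify the identity at the neutral element using Proposition \ref{De}, the Maurer--Cartan formula for $\dr\zeta^L$, and the fact that $\lie g_0$ is an ideal annihilated by $\lie p_1$; then propagate to arbitrary $g\in G$ by using Lemma \ref{too_useful} to decompose $X_\xi(gh)$ in terms of $X_\xi(h)$ and $X_{\Ad_{h^{-1}}^*\xi}(g)$ modulo $\mathsf{G_0}$, and combining this with the cocycle identity of Theorem \ref{cocycle}. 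The Jacobi identity on $\lie p_1$ enters precisely at the stage where three cyclic contributions arising from the decomposition need to cancel, which is where the real work of the computation concentrates.
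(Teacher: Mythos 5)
Your argument is correct and follows the same overall strategy as the paper, but the two halves diverge slightly in execution, so a comparison is worth recording. For the forward direction the paper does not invoke the abstract Jacobi identity of the Courant bracket as you do; it computes the Jacobiator $[\xi,[\zeta,\eta]]+[\eta,[\xi,\zeta]]+[\zeta,[\eta,\xi]]$ directly from \eqref{formel} and \eqref{left_inv_der_of_Xmu}, showing it equals $-\dr_e\bigl(T_{\mathsf D_G}(\ldots)\bigr)=0$. Your route -- iterating Proposition \ref{lemma} to get $[[(X_\xi,\xi^L),(X_\eta,\eta^L)],(X_\zeta,\zeta^L)]=([[X_\xi,X_\eta],X_\zeta],[[\xi,\eta],\zeta]^L)$ and reading off the cyclic sum -- is shorter and legitimate, since integrability guarantees $([X_\xi,X_\eta],[\xi,\eta]^L)\in\Gamma(\mathsf D_G)$ so that $[X_\xi,X_\eta]$ is an admissible choice of $X_{[\xi,\eta]}$; what it buys is that you never need the explicit formula \eqref{formel}. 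For the converse your reduction to $\zeta^L([X_\xi,X_\eta]-X_{[\xi,\eta]})=0$ is precisely the paper's statement $S_{\mathsf D_G}=0$ from Lemma \ref{lemmaS}, and your two-step plan (vanishing at $e$, then propagation combining Lemma \ref{too_useful}, Theorem \ref{cocycle} and the Jacobi identity) is the same mechanism the paper runs infinitesimally: it proves $\ldr{x^L}S_{\mathsf D_G}=0$ for all $x\in\lie g$ using \eqref{left_inv_der_of_Xmu} and \eqref{ppart}, with the Jacobiator appearing as the exact obstruction, and then evaluates at $e$ via \eqref{formel1}. Be aware that the step you defer as ``where the real work concentrates'' is also the step the paper compresses into ``a long but straightforward calculation''; your proposal identifies all the right ingredients but, like the paper, does not display that computation, so nothing is missing at the level of ideas, only at the level of the final bookkeeping.
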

  In this case, Theorem \ref{cocycle} implies that the pair 
$(\lie g/\lie g_0,\lie p_1)$ is a Lie bialgebra.

Of course, with Theorem \ref{action_of_N}, we could show this theorem by
considering the Lie bialgebra structure defined on $(\lie g/\lie g_0, \lie p_1)$ by
the multiplicative Poisson  structure on $\tilde G/\tilde N$ (recall Proposition
\ref{prop_univ_cover} and Remarks \ref{quotient} and \ref{rem_univ_cover}),  
but we prefer to do that in the setting of  Dirac manifolds.

For the proof of the theorem, we will need the following lemmas concerning the
tensor $T_{\mathsf D_G}$ (see Subsection \ref{subsection_dirac_structures} about Dirac manifolds).

\begin{lemma} Let $(G,\mathsf D_G)$ be a Dirac Lie group. The tensor $T_{\mathsf D_G}$ is given by
\begin{align}\label{formel}
2\,T_{\mathsf D_G}\Bigl((X_\xi,\xi^L),(X_\eta,\eta^L),(X_\zeta,\zeta^L)\Bigr)
=&X_\zeta(\eta^L(X_\xi))+X_\xi(\zeta^L(X_\eta))+X_\eta(\xi^L(X_\zeta))\nonumber\\
&\,-[\zeta,\eta]^L(X_\xi)-[\xi,\zeta]^L(X_\eta)-[\eta,\xi]^L(X_\zeta)
\end{align}
for all $\xi,\eta,\zeta\in\lie p_1$ and corresponding $X_\xi,X_\eta,X_\zeta\in\mx(G)$,
and in particular
\begin{align}
T_{\mathsf D_G}(e)((x,\xi),(y,\eta),(z,\zeta))
=&[\xi,\eta](z)+[\eta,\zeta](x)+[\zeta,\xi](y)\label{formel1}
\end{align}
for any $x,y,z\in\lie g_0$.
\end{lemma}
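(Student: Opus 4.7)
The plan is to compute $T_{\mathsf D_G}((X_\xi, \xi^L), (X_\eta, \eta^L), (X_\zeta, \zeta^L))$ in two different ways and compare. Let $A, B, C$ denote the three ``derivative'' terms $X_\zeta(\eta^L(X_\xi))$, $X_\xi(\zeta^L(X_\eta))$, $X_\eta(\xi^L(X_\zeta))$ and $D, E, F$ the three ``bracket'' terms $[\zeta, \eta]^L(X_\xi)$, $[\xi, \zeta]^L(X_\eta)$, $[\eta, \xi]^L(X_\zeta)$ appearing in the target identity \eqref{formel}. First I would expand the definition $T_{\mathsf D_G}(a, b, c) = \langle [a, b], c \rangle$ (Dorfman bracket) using the Cartan formulas $(\ldr{X_\xi}\eta^L)(X_\zeta) = X_\xi(\eta^L(X_\zeta)) - \eta^L([X_\xi, X_\zeta])$ and $(\ip{X_\eta}\dr\xi^L)(X_\zeta) = X_\eta(\xi^L(X_\zeta)) - X_\zeta(\xi^L(X_\eta)) - \xi^L([X_\eta, X_\zeta])$. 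Applying the orthogonality relations $\xi^L(X_\eta) + \eta^L(X_\xi) = 0$ and its two cyclic analogues (valid since the three sections lie in the Lagrangian $\mathsf D_G$) converts the three derivative-type terms into $-(A + B + C)$, while the remaining pieces regroup into the cyclically symmetric sum
\[ S := \zeta^L([X_\xi, X_\eta]) + \eta^L([X_\zeta, X_\xi]) + \xi^L([X_\eta, X_\zeta]). \]
This yields a first expression $T_{\mathsf D_G} = S - (A + B + C)$.

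Second, by Proposition \ref{lemma} (equivalently Definition \ref{defbracket}), the Dorfman bracket $[(X_\xi, \xi^L), (X_\eta, \eta^L)]$ has $[\xi, \eta]^L$ as its second component. Pairing with $(X_\zeta, \zeta^L)$ therefore gives $T_{\mathsf D_G} = \zeta^L([X_\xi, X_\eta]) + [\xi, \eta]^L(X_\zeta) = \zeta^L([X_\xi, X_\eta]) - F$. Because the first expression is manifestly cyclically symmetric in $(\xi, \eta, \zeta)$, so is $T_{\mathsf D_G}$ on these sections; summing the three cyclic permutations of this second expression therefore gives $3\,T_{\mathsf D_G} = S - (D + E + F)$. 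Substituting $S = T_{\mathsf D_G} + (A + B + C)$ from the first expression yields $2\,T_{\mathsf D_G} = (A + B + C) - (D + E + F)$, which is exactly \eqref{formel}.

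To derive \eqref{formel1}, I would evaluate \eqref{formel} at the neutral element. By Proposition \ref{De}, $\mathsf D_G(e) = \lie g_0 \operp \lie p_1$, so I can choose $X_\xi, X_\eta, X_\zeta$ with $X_\xi(e) = x$, $X_\eta(e) = y$, $X_\zeta(e) = z$. By Definition \ref{defbracket}, $\dr_e(\eta^L(X_\xi)) = [\xi, \eta]$, whence $X_\zeta(\eta^L(X_\xi))(e) = [\xi, \eta](z)$, and cyclically $X_\xi(\zeta^L(X_\eta))(e) = [\eta, \zeta](x)$ and $X_\eta(\xi^L(X_\zeta))(e) = [\zeta, \xi](y)$. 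The three bracket terms evaluate at $e$ directly to $[\zeta, \eta](x)$, $[\xi, \zeta](y)$, $[\eta, \xi](z)$ since $[\cdot, \cdot] \in \lie g^*$. Using antisymmetry $[\xi, \eta] = -[\eta, \xi]$, the six terms collapse in pairs to twice $[\xi, \eta](z) + [\eta, \zeta](x) + [\zeta, \xi](y)$, giving \eqref{formel1}.

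The main obstacle is the careful sign bookkeeping in the first step: one must coordinate the Lagrangian orthogonality and the antisymmetry of the Lie bracket of vector fields so that the eight terms produced by the Cartan expansion reassemble into $-(A + B + C)$ plus the cyclically symmetric $S$. Once that cyclic symmetry is recognized, the comparison with the form provided by Proposition \ref{lemma} and the three-term cyclic summation produce the factor $2$ in \eqref{formel} almost immediately.
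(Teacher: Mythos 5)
Your argument is correct and is essentially the paper's proof: both rest on the same two ingredients, namely the expansion $T_{\mathsf D_G}=S-(A+B+C)$ with $S=\zeta^L([X_\xi,X_\eta])+\eta^L([X_\zeta,X_\xi])+\xi^L([X_\eta,X_\zeta])$, and the fact (Proposition \ref{lemma}, Definition \ref{defbracket}) that the second component of the Dorfman bracket of two such sections is $[\xi,\eta]^L$. The only divergence is in the final bookkeeping: the paper expands the sum $D+E+F$ by Cartan calculus into $-2S+3(A+B+C)$, whereas you derive the equivalent relation $3\,T_{\mathsf D_G}=S-(D+E+F)$ from the cyclic symmetry of the first expression together with $T_{\mathsf D_G}=\zeta^L([X_\xi,X_\eta])+[\xi,\eta]^L(X_\zeta)$ --- a slightly cleaner route to the same linear elimination; your evaluation at $e$ coincides with the paper's.
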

\begin{proof}
Choose
$\xi,\eta,\zeta\in\lie p_1$  and corresponding vector fields 
$X_\xi,X_\eta,X_\zeta\in\mx(G)$. Then the following equality is easy to show:
\begin{align*}
T_{\mathsf D_G}\Bigl((X_\xi,\xi^L),(X_\eta,\eta^L),(X_\zeta,\zeta^L)\Bigr)
=&\zeta^L([X_\xi,X_\eta])+\xi^L([X_\eta,X_\zeta])+\eta^L([X_\zeta,X_\xi])\\
&+X_\zeta(\xi^L(X_\eta))+X_\xi(\eta^L(X_\zeta))+X_\eta(\zeta^L(X_\xi)).
\end{align*}
Using the definition of the bracket, we have also 
\begin{align*}
 &[\zeta,\eta]^L(X_\xi)+[\xi,\zeta]^L(X_\eta)+[\eta,\xi]^L(X_\zeta)\\
=&(\ldr{X_\zeta}\eta^L-\ip{X_\eta}\dr\zeta^L)(X_\xi)
+(\ldr{X_\xi}\zeta^L-\ip{X_\zeta}\dr\xi^L)(X_\eta)
+(\ldr{X_\eta}\xi^L-\ip{X_\xi}\dr\eta^L)(X_\zeta)\\
=&2\,
\Bigl(\zeta^L([X_\eta,X_\xi])+\eta^L([X_\xi,X_\zeta])+\xi^L([X_\zeta,X_\eta])\Bigr)\\
&+3\,\Bigl(X_\zeta(\eta^L(X_\xi))+X_\xi(\zeta^L(X_\eta))+X_\eta(\xi^L(X_\zeta))
\Bigr)\\
=&-2\,T_{\mathsf D_G}\Bigl((X_\xi,\xi^L),(X_\eta,\eta^L),(X_\zeta,\zeta^L)\Bigr)
+X_\zeta(\eta^L(X_\xi))+X_\xi(\zeta^L(X_\eta))+X_\eta(\xi^L(X_\zeta)).
\end{align*}
Evaluated at $e$, this leads to
\begin{align*}
T_{\mathsf D_G}(e)((X_\xi(e),\xi),(X_\eta(e),\eta),(X_\zeta(e),\zeta))
=&\frac{1}{2}
\Bigl(\dr_e(\eta^L(X_\xi))(X_\zeta(e))
+\dr_e(\zeta^L(X_\eta))(X_\xi(e))+\dr_e(\xi^L(X_\zeta))(X_\eta(e))\\
&\qquad-[\zeta,\eta](X_\xi(e))-[\xi,\zeta](X_\eta(e))-[\eta,\xi](X_\zeta(e))\Bigr)\\
=&[\xi,\eta](X_\zeta(e))+[\eta,\zeta](X_\xi(e))+[\zeta,\xi](X_\eta(e)).
\end{align*} 
\end{proof}

\begin{lemma}\label{lemmaS}
Assume that the bracket on $\lie p_1\times \lie p_1$ has image in $\lie p_1$. 
Then,
$T_{\mathsf D_G}\Bigl((X_\xi,\xi^L),(X_\eta,\eta^L),(X_\zeta,\zeta^L)\Bigr)$
is independant of the choice of the vector fields
$X_\xi,X_\eta,X_\zeta\in\mx(G)$. The tensor $T_{\mathsf D_G}$
defines in this case a tensor $S_{\mathsf D_G}\in\Gamma(\bigwedge^3\lie \mathsf{P_1}^*)$
by
\[S_{\mathsf D_G}(\xi^L,\eta^L,\zeta^L)
=T_{\mathsf D_G}\Bigl((X_\xi,\xi^L),(X_\eta,\eta^L),(X_\zeta,\zeta^L)\Bigr)
\]
for all $\xi,\eta,\zeta\in\lie p_1$ and $(G,\mathsf D_G)$ is integrable if and only
if $S_{\mathsf D_G}$ vanishes on $G$.
\end{lemma}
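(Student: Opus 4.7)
The plan is to establish the three claims in turn: (i) independence of $T_{\mathsf D_G}\bigl((X_\xi,\xi^L),(X_\eta,\eta^L),(X_\zeta,\zeta^L)\bigr)$ from the choice of $X_\xi, X_\eta, X_\zeta$; (ii) the construction of the tensor $S_{\mathsf D_G}\in\Gamma(\bigwedge^3\mathsf{P_1}^*)$ by descent; and (iii) the integrability criterion.

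For (i) I would start from formula \eqref{formel}, replace $X_\xi$ by $X_\xi+Z$ with $Z\in\Gamma(\mathsf{G_0})$, and track the changes term by term (by trilinearity it suffices to perturb one entry at a time). Three of the six summands are unaffected because they do not involve $X_\xi$. Two others --- $\eta^L(X_\xi)$ and $[\zeta,\eta]^L(X_\xi)$ --- are invariant under the perturbation because $\eta^L$ and, thanks to the hypothesis that $[\cdot\,,\cdot]$ takes values in $\lie p_1$, also $[\zeta,\eta]^L$ are sections of $\mathsf{P_1}=\mathsf{G_0}^\circ$. The only remaining term contributes $Z(\zeta^L(X_\eta))$; writing $Z=\sum f_i x_i^L$ with $x_i\in\lie g_0$ (possible by $\mathsf{G_0}=\lie g_0^L$ from Proposition \ref{oncardis}), the computation reduces to $x^L(\zeta^L(X_\eta))=(\ad_x^*\zeta)^L(X_\eta)+\zeta^L(\ldr{x^L}X_\eta)$, where the first summand vanishes because $\lie g_0$ is an ideal and $\zeta\in\lie g_0^\circ$, and the second because $\ldr{x^L}X_\eta\in\Gamma(\mathsf{G_0})$ by Remark \ref{remark_on_lieder_by_g0} (this is precisely where the bracket hypothesis enters, via the $N$-invariance granted by Theorem \ref{action_of_N}) and $\zeta^L$ annihilates $\mathsf{G_0}$.

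For (ii) I will first recall that the Courant--Dorfman tensor $T_{\mathsf D_G}$ is totally antisymmetric and $C^\infty(G)$-multilinear on $\Gamma(\mathsf D_G)$. Any section of $\mathsf D_G$ decomposes locally as $\sum_i f_i(X_{\xi_i},\xi_i^L)+(Z,0)$ for a fixed basis $(\xi_i)$ of $\lie p_1$ and some $Z\in\Gamma(\mathsf{G_0})$, so it suffices to check that $T_{\mathsf D_G}$ vanishes as soon as at least one of the three arguments has the form $(Z,0)$. By antisymmetry the critical case is $\langle[(X_\eta,\eta^L),(X_\zeta,\zeta^L)],(Z,0)\rangle=[\eta,\zeta]^L(Z)$, which vanishes by the bracket hypothesis combined with $Z\in\mathsf{P_1}^\circ$; the remaining cases with two or three $(Z,0)$ entries reduce to $\zeta^L([Z,Z'])$ and to $0$, respectively, and these vanish by the involutivity of $\mathsf{G_0}$ (Lemma \ref{P1exact}). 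Thus $T_{\mathsf D_G}$ descends to a well-defined, $C^\infty(G)$-multilinear, totally antisymmetric map on $\Gamma(\mathsf{P_1})^3$, yielding an element $S_{\mathsf D_G}\in\Gamma(\bigwedge^3\mathsf{P_1}^*)$.

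The integrability claim (iii) is then essentially free: the forward implication is tautological, and the converse is an immediate consequence of (ii), since $S_{\mathsf D_G}\equiv 0$ says exactly that $T_{\mathsf D_G}$ vanishes on triples built from the spanning sections $(X_{\xi_i},\xi_i^L)$, while the computation in (ii) handles all triples containing a $(Z,0)$ automatically. The main obstacle throughout is the step in (i) showing $Z(\zeta^L(X_\eta))=0$: this is the only place where the hypothesis on the bracket is genuinely used, and without it $T_{\mathsf D_G}$ would in general depend on the choice of the vector fields $X_\xi$.
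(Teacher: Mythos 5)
Your proof is correct and follows essentially the same route as the paper: the key step in both is that for $x\in\lie g_0$ one has $x^L(\zeta^L(X_\eta))=(\ad_x^*\zeta)^L(X_\eta)+\zeta^L(\ldr{x^L}X_\eta)=0$ via Remark \ref{remark_on_lieder_by_g0}, and the integrability criterion reduces in both arguments to checking $T_{\mathsf D_G}$ on the spanning sections $(x^L,0)$ and $(X_\xi,\xi^L)$, with the triples involving a $(Z,0)$ entry vanishing automatically. Your treatment of the descent to a $C^\infty(G)$-multilinear tensor on $\Gamma(\mathsf{P_1})^3$ is slightly more explicit than the paper's, but this is a presentational refinement rather than a different method (and note that Remark \ref{remark_on_lieder_by_g0} uses the bracket hypothesis directly through Lemma \ref{lie_der_of_Xmu}, not through Theorem \ref{action_of_N}, whose proof in fact relies on that remark).
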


\begin{proof}
Consider \eqref{formel}. Since $[\xi,\eta]\in\lie p_1$, we have
$[\xi,\eta]^L(X_\zeta+Z)=[\xi,\eta]^L(X_\zeta)$ for all $Z\in\Gamma(\mathsf{G_0})$.
Thus, we have only to show
that $X_\xi(\eta^L(X_\zeta)$ is independent of the choices of $X_\xi,X_\zeta$. 
Choose $Z$ and $W\in\Gamma(\mathsf{G_0})$ and compute
\begin{align*}
(X_\xi+Z)(\eta^L(X_\zeta+W))
&=X_\xi(\eta^L(X_\zeta))+Z(\eta^L(X_\zeta))+(X_\xi+Z)(\eta^L(W))
=X_\xi(\eta^L(X_\zeta)+Z(\eta^L(X_\zeta))
\end{align*}
since $\eta^L(W)=0$. For any $x\in\lie g_0$, we have
$x^L(\eta^L(X_\zeta))=(\ad_x^*\eta)^L(X_\zeta)+\eta^L(\ldr{x^L}X_\zeta)=0$
since $\ad_x^*\eta=0$ and $\ldr{x^L}X_\zeta\in\Gamma(\mathsf{G_0})$ by Remark
\ref{remark_on_lieder_by_g0}. Since $\Gamma(\mathsf{G_0})$ is spanned as
a $\smoo$-module by $\{x^L\mid x\in\lie g_0\}$, we are done. 

Recall that the pairs $(x^L,0)$ and $(X_\xi,\xi^L)$, for all $x\in\lie g_0$
and $\xi\in\lie p_1$ span the Dirac bundle $\mathsf D_G$. Hence, to prove
integrability of $\mathsf D_G$, we have only to show that
the Courant bracket of two sections of $\mathsf D_G$ of this type is a section
of $\mathsf D_G$.
We have already
$[(x_1^L,0),(x_2^L,0)]\in\Gamma(\mathsf D_G)$ for all $x_1,x_2\in\lie g_0$ 
since $\lie g_0$ is an ideal in
$\lie g$ and 
$[(x^L,0),(X_\xi,\xi^L)]=(\ldr{x^L}X_\xi,(\ad_x^*\xi)^L)=(\ldr{x^L}X_\xi,0)
\in\Gamma(\mathsf D_G)$ by Remark
\ref{remark_on_lieder_by_g0} .
Thus, we have only to show that
$([X_\xi,X_\eta],[\xi,\eta]^L)$ is a section of $\mathsf D_G$ for all
$\xi,\eta\in\lie p_1$. Since $[\xi,\eta]\in\lie p_1$, we have 
$\left\langle (x^L,0),([X_\xi,X_\eta],[\xi,\eta]^L)\right\rangle=
[\xi,\eta](x)=0$ for all $x\in \lie g_0$. The Dirac structure $\mathsf D_G$ is
thus integrable if and only if 
$\left\langle([X_\xi,X_\eta],[\xi,\eta]^L),(X_\zeta,\zeta^L)\right\rangle=0$
for all $\xi,\eta,\zeta\in\lie p_1$, that is, if and only if $S_{\mathsf D_G}=0$.
\end{proof}

\begin{proof}[of Theorem \ref{liealgebra}] 
We have to show that the bracket has image in $\lie p_1$ and satisfies the Jacobi 
identity if and only if the Dirac Lie group $(G,\mathsf D_G)$ is integrable.

Assume first that $(G,\mathsf D_G)$ is integrable. The tensor 
$T_{\mathsf D_G}$ vanishes
identically on $G$ and 
\[\left[(X_\xi,\xi^L),(X_\eta,\eta^L)\right]
=\left([X_\xi,X_\eta],\ldr{X_\xi}\eta^L-\ip{X_\eta}\dr\xi^L\right)
=\left([X_\xi,X_\eta],[\xi,\eta]^L\right)\]
is a section of $\mathsf D_G$ for any $\xi,\eta\in\lie p_1$. Hence the covector 
$[\xi,\eta]=\dr_e(\eta^L(X_\xi))$ is an element of $\lie p_1$ for all 
$\xi,\eta\in\lie p_1$.
We get then using \eqref{formel}
\begin{align*}
&[\xi,[\zeta,\eta]]+[\eta,[\xi,\zeta]]+[\zeta,[\eta,\xi]]
=\,\dr_e\left([\zeta,\eta]^L(X_\xi)\right)+\dr_e\left([\xi,\zeta]^L(X_\eta)\right)
+\dr_e\left([\eta,\xi]^L(X_\zeta)\right)\\
=&-2\dr_e\left(T_{\mathsf D_G}\Bigl((X_\xi,\xi^L),(X_\eta,\eta^L),
(X_\zeta,\zeta^L)\Bigr)\right)
+\dr_e\left(X_\zeta(\eta^L(X_\xi))+X_\xi(\zeta^L(X_\eta))+X_\eta(\xi^L(X_\zeta))\right)
\end{align*}
for all $\xi,\zeta,\eta\in\lie p_1$. 
We have for any $x\in \lie g$:
\begin{align*}
&\dr_e\left(X_\zeta(\eta^L(X_\xi))\right)(x)=x^L\left(X_\zeta(\eta^L(X_\xi))\right)(e)\\
\overset{\eqref{left_inv_der_of_Xmu}}
=&\,\dr_e\left(\eta^L(X_\xi)\right)(-\ad_\zeta^*x)
+\dr_e\left((\ad_x^*\eta)^L(X_\xi)\right)(X_\zeta(e))
+X_\zeta\left(\eta^L\left((-\ad_\xi^*x)^L+X_{\ad_x^*\xi}
\right)\right)(e)\\
=&\,\,[\xi,\eta](-\ad_\zeta^*x)
+[\xi,\ad_x^*\eta](X_\zeta(e))
+[\ad_x^*\xi,\eta](X_\zeta(e))=[\zeta,[\xi,\eta]](x).
\end{align*}
We have used the equality $\mathsf D_G(e)=\lie g_0\times\lie p_1$ and 
$[\xi,\ad_x^*\eta],[\ad_x^*\xi,\eta]\in\lie p_1$ as we have seen above.
This leads to
\begin{align*}
[\xi,[\zeta,\eta]]+[\eta,[\xi,\zeta]]+[\zeta,[\eta,\xi]]
=-\dr_e\left(T_{\mathsf D_G}\Bigl((X_\xi,\xi^L),(X_\eta,\eta^L),
(X_\zeta,\zeta^L)\Bigr)\right)=0.
\end{align*}

\medskip

For the converse implication, we know by Lemma \ref{lemmaS} and the hypothesis that
the Lie bracket has image in $\lie p_1$ that
we have only to show the equality $S_{\mathsf D_G}=0$. We compute 
$\ldr{x^L}(S_{\mathsf D_G})$ for any
$x\in\lie g$. It is given for any $g\in G$ and $\xi,\eta,\zeta\in\lie p_1$ by
\begin{align*}
\left(\ldr{x^L}S_{\mathsf D_G}\right)(g)(\xi^L(g),\eta^L(g),\zeta^L(g))
=&\ldr{x^L}\left(S_{\mathsf D_G}(\xi^L,\eta^L,\zeta^L)\right)(g)
-S_{\mathsf D_G}((\ad_x^*\xi)^L,\eta^L,\zeta^L)(g)\\
&\,-S_{\mathsf D_G}(\xi^L,(\ad_x^*\eta)^L,\zeta^L)(g)
-S_{\mathsf D_G}(\xi^L,\eta^L,(\ad_x^*\zeta)^L)(g).
\end{align*} 

A long but straightforward calculation using the definition of 
$S_{\mathsf D_G}$, \eqref{formel}, \eqref{left_inv_der_of_Xmu}
and
\eqref{ppart} 
yields that
\begin{align*}
\ldr{x^L}\left(S_{\mathsf D_G}(\xi^L,\eta^L,\zeta^L)\right)
=&S_{\mathsf D_G}((\ad_x^*\xi)^L,\eta^L,\zeta^L)
 +S_{\mathsf D_G}(\xi^L,(\ad_x^*\eta)^L,\zeta^L)
 +S_{\mathsf D_G}(\xi^L,\eta^L,(\ad_x^*\zeta)^L)\\
&+ \Bigl([[\zeta,\eta],\xi]+[[\xi,\zeta],\eta]+[[\eta,\xi],\zeta]
 \Bigr)(x).\end{align*}
Since  $[\cdot,\,\cdot]:\lie p_1\times \lie p_1\to \lie p_1$ satisfies the
Jacobi identity by hypothesis,
this shows that $\ldr{x^L}S_{\mathsf D_G}=0$ for all $x\in\lie g$ and
consequently that $S_{\mathsf D_G}$ is right invariant. Thus, we get
\begin{align*}
S_{\mathsf D_G}(g)\left(\xi^R(g),\zeta^R(g),\eta^R(g)\right)
&=S_{\mathsf D_G}(e)(\xi,\zeta,\eta)
\overset{\eqref{formel1}}=[\zeta,\eta](X_\xi(e))+[\xi,\zeta](X_\eta(e))+[\eta,\xi](X_\zeta(e))=0
\end{align*}
since $[\cdot,\cdot]$ has image in $\lie p_1$ and $\mathsf D_G(e)=\lie
g_0\times\lie p_1$.
Hence, we have shown that $S_{\mathsf D_G}$ vanishes
identically on $G$ and the Dirac Lie group $(G,\mathsf D_G)$ is consequently
integrable by Lemma \ref{lemmaS}.
\end{proof}

\begin{remark}\label{bracket_of_Xmu}
\begin{enumerate}
\item We can see from the last proof that
\[(\ldr{x^L}S_{\mathsf D_G})(\xi^L,\zeta^L,\eta^L)
=[[\xi,\eta],\zeta]+[[\eta,\zeta],\xi]+[[\zeta,\xi],\eta]
\]
if the bracket
on $\lie p_1\times \lie p_1$ has image in $\lie p_1$.
This shows that $\ldr{x^L}S_{\mathsf D_G}$ is left-invariant and  we can see using
\eqref{formel1} and $\mathsf D_G(e)=\lie g_0\times\lie p_1$ 
that $S_{\mathsf D_G}(e)=0$. Thus, $S_{\mathsf
  D_G}\in\Gamma(\bigwedge^3\mathsf{P_1}^*)$ 
is multiplicative (see for instance \cite{Lu90}).
\item Note that if $(G,\mathsf D_G)$ is integrable, then $[\xi,\eta]\in\lie p_1$ and we get 
$X_{[\xi,\eta]}\in [X_\xi,X_\eta]+\Gamma(\mathsf{G_0})$
for all $\xi,\eta\in\lie p_1$ and any vector field $X_{[\xi,\eta]}$ such that
$(X_{[\xi,\eta]},[\xi,\eta]^L)\in\Gamma(\mathsf D_G)$.
\item If $(G,\mathsf D_G)$ is a Poisson Lie group, that is, $\mathsf D_G$ is the 
graph of the map $\pi^\sharp:T^*G\to TG$ induced by a multiplicative Poisson 
bivector field $\pi_G$ on $G$, we recover the Lie bracket defined by a Poisson Lie
group on the dual of its Lie algebra (see \cite{Lu90}). 
For all $\xi,\eta\in\lie g^*$ and
$x\in\lie g$:
\begin{align*}
[\xi,\eta](x)&= \dr_e(\eta^L(X_\xi))(x)
=x^R(\eta^L(X_\xi))(e)=x^R(\pi(\xi^L,\eta^L))(e)\\
&=(\ldr{x^R}\pi)(e)(\xi,\eta)
+\pi(e)(\ldr{x^R}\xi^L(e),\eta)+\pi(e)(\xi,\ldr{x^R}\eta^L(e))
=(\ldr{x^R}\pi)(e)(\xi,\eta)=(\dr_e\pi)^*(\xi,\eta)(x).
\end{align*}
\end{enumerate}
\end{remark}

Recall that since $\lie g_0$ is an ideal of $\lie g$, 
the quotient $\lie g/\lie g_0$ has a
canonical Lie algebra structure given by
$[x+\lie g_0,y+\lie g_0]=[x,y]+\lie g_0$ for all $x,y\in\lie g$.
Recall from Theorems \ref{cocycle} and  \ref{liealgebra}  that if $(G,\mathsf D_G)$ is integrable,
then the pair $(\lie g/\lie g_0,\lie p_1)$ is a Lie bialgebra.
The following theorem is a general fact about Lie bialgebras
(see for instance \cite{LuWe90}).
\begin{theorem}\label{bracket}
Assume that the Dirac Lie group
$(G,\mathsf D_G)$ is integrable.
The Lie algebra structures on $\lie g/\lie g_0$ and $\lie p_1$ induce a Lie algebra
structure on $\lie g/\lie g_0\times\lie p_1$,
 with the bracket given by
\[
[(x+\lie g_0,\xi),(y+\lie g_0,\eta)]
=\left(
[x,y]-\ad_\eta^*x+\ad_\xi^*y+\lie g_0,[\xi,\eta]+\ad_x^*\eta-\ad_y^*\xi
\right),
\]
for all $x,y\in\lie g$ and $\xi,\eta\in\lie p_1$.  
\end{theorem}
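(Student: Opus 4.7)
The pair $(\lie g/\lie g_0,\lie p_1)$ is by Theorems \ref{cocycle} and \ref{liealgebra} a Lie bialgebra, and the claimed bracket is precisely the bracket of its Drinfel$'$d double. Accordingly the plan is to verify (i) well--definedness on $\lie g/\lie g_0\times\lie p_1$, (ii) bilinearity and antisymmetry, and (iii) the Jacobi identity, splitting the last check into four cases according to how many of the three arguments lie in $\lie p_1$.

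For (i), changing a representative $x\in\lie g$ of $x+\lie g_0$ by an element $x_0\in\lie g_0$ alters $[x,y]$ by $[x_0,y]$, which is in $\lie g_0$ since $\lie g_0$ is an ideal by Corollary \ref{adG_invariance}; alters $\ad_\eta^*x$ by $\ad_\eta^*x_0\in\lie g/\lie g_0$, which vanishes because $(\ad_\eta^*x_0)(\zeta)=[\zeta,\eta](x_0)=0$ for all $\zeta\in\lie p_1$ in view of $[\zeta,\eta]\in\lie p_1=\lie g_0^\circ$ (Theorem \ref{liealgebra}); and alters $\ad_x^*\eta$ by $\ad_{x_0}^*\eta$, which vanishes as $(\ad_{x_0}^*\eta)(y)=\eta([y,x_0])=0$ since $[y,x_0]\in\lie g_0$ and $\eta\in\lie g_0^\circ$. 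The second component $[\xi,\eta]+\ad_x^*\eta-\ad_y^*\xi$ lies in $\lie p_1$ by Theorem \ref{liealgebra} and Corollary \ref{adG_invariance}, so the formula indeed yields an element of $\lie g/\lie g_0\times\lie p_1$. Bilinearity (ii) is clear, and antisymmetry follows from the antisymmetry of $[\cdot,\cdot]$ on $\lie g$ and $\lie p_1$ combined with the evident exchange of $\pm\ad_\xi^*y\mp\ad_\eta^*x$ and $\pm\ad_x^*\eta\mp\ad_y^*\xi$ under swapping of the two arguments.

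For (iii), I expand $[(x+\lie g_0,\xi),[(y+\lie g_0,\eta),(z+\lie g_0,\zeta)]]$ and its two cyclic permutations, and group the resulting terms by \emph{type} according to how many $\lie p_1$--letters they contain. The purely $\lie g/\lie g_0$--type terms sum to $[x,[y,z]]+\mathrm{cyc.}\equiv 0\pmod{\lie g_0}$ by Jacobi in $\lie g$. The purely $\lie p_1$--type terms sum to $[\xi,[\eta,\zeta]]+\mathrm{cyc.}=0$ by Theorem \ref{liealgebra}. The mixed terms of type two--$\lie g$--one--$\lie p_1$ (contributing to the $\lie g/\lie g_0$--component) collect, after a routine cancellation using the Jacobi identity in $\lie g$, into an expression of the form \[\ad_\zeta^*[x,y]-[\ad_\zeta^*x,y]+[\ad_\zeta^*y,x]+\ad_{\ad_x^*\zeta}^*y-\ad_{\ad_y^*\zeta}^*x\] (plus cyclic permutations), which vanishes by equation \eqref{gpart}. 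Symmetrically, the mixed terms of type one--$\lie g$--two--$\lie p_1$ (contributing to the $\lie p_1$--component) reduce, via Jacobi on $\lie p_1$, to \[\ad_x^*[\xi,\eta]-[\ad_x^*\xi,\eta]+[\ad_x^*\eta,\xi]+\ad_{\ad_\eta^*x}^*\xi-\ad_{\ad_\xi^*x}^*\eta\] (plus cyclic permutations), which vanishes by equation \eqref{ppart}.

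The main obstacle is the mixed--type bookkeeping in the last step: a large number of terms must be collected and matched against \eqref{ppart}--\eqref{gpart}. No new input is required though, since both compatibilities are already proved in Theorem \ref{cocycle}. Once all four types of terms cancel separately, both components of $[(x+\lie g_0,\xi),[(y+\lie g_0,\eta),(z+\lie g_0,\zeta)]]+\mathrm{cyc.}$ vanish, which is Jacobi.
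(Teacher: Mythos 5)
The paper does not actually prove this statement: it declares it to be ``a general fact about Lie bialgebras'' and refers to \cite{LuWe90}, relying on Theorems \ref{cocycle} and \ref{liealgebra} only to establish that $(\lie g/\lie g_0,\lie p_1)$ is a Lie bialgebra. Your proposal supplies the standard direct verification of the Drinfel$'$d double construction, which is exactly the argument the cited reference would provide, so you are not taking a genuinely different route so much as filling in the omitted one. Your well-definedness check is correct and uses precisely the right inputs ($\lie g_0$ an ideal, $\ad_x^*\lie p_1\subseteq\lie p_1$ from Corollary \ref{adG_invariance}, and $[\lie p_1,\lie p_1]\subseteq\lie p_1=\lie g_0^\circ$ from Theorem \ref{liealgebra}), and reducing the Jacobi identity to the two Jacobi identities plus the compatibilities \eqref{ppart}--\eqref{gpart} of Theorem \ref{cocycle} is the right strategy.

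Two small points to repair. First, your type bookkeeping is slightly misstated: the terms with two $\lie g$-letters and one $\lie p_1$-letter contribute to \emph{both} components (e.g.\ $\ad_x^*\ad_y^*\zeta$ and $\ad^*_{[y,z]}\xi$ lie in the $\lie p_1$-component), and within each letter-type the terms landing in one component cancel by the Jacobi identity of the acting algebra (the coadjoint-action property), while those landing in the other cancel by the mixed compatibility; your phrasing conflates these. Second, and more concretely, your displayed combination for the $\lie p_1$-component has a sign error: equation \eqref{ppart} reads $\ad_x^*([\xi,\eta])=[\ad_x^*\xi,\eta]-[\ad_x^*\eta,\xi]+\ad_{\ad_\eta^*x}^*\xi-\ad_{\ad_\xi^*x}^*\eta$, so the vanishing combination is $\ad_x^*[\xi,\eta]-[\ad_x^*\xi,\eta]+[\ad_x^*\eta,\xi]-\ad_{\ad_\eta^*x}^*\xi+\ad_{\ad_\xi^*x}^*\eta$; the last two terms in your display carry the opposite signs, and with those signs the expression differs from zero by $2\bigl(\ad_{\ad_\eta^*x}^*\xi-\ad_{\ad_\xi^*x}^*\eta\bigr)$. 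This is a transcription slip rather than a gap in the method; the underlying identity you invoke is the correct one.
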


\subsection{The action of  $G$ on $\lie g/\lie g_0\times\lie p_1$}
\begin{theorem}\label{action_of_G_mod_G0}
Let $(G,\mathsf{D}_G)$ be a Dirac Lie group.
Define $$A:G\times (\lie g/\lie g_0\times\lie p_1)
\to\lie g/\lie g_0\times\lie p_1 $$ by
\[A(g,(x+\lie g_0,\xi))=\left(\Ad_gx+T_gR_{g^{-1}}X_\xi(g)+\lie g_0,\Ad_{g\inv}^*\xi\right)
\]
for all $g\in G$, where $X_\xi\in\mx(G)$ is a  vector field 
such that $(X_\xi,\xi^L)\in\Gamma(\mathsf D_G)$.
The map $A$ is a well-defined action of $G$ on $\lie g/\lie g_0\times\lie p_1$.
\end{theorem}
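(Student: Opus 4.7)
The plan is to verify three things in order: well-definedness of the formula, the identity axiom $A(e,\cdot)=\mathrm{id}$, and the composition axiom $A(gh,\cdot)=A(g,A(h,\cdot))$. The composition axiom is where Lemma \ref{too_useful} does the real work, and it will be the only non-routine part.

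For well-definedness, I would first observe that by Corollary \ref{adG_invariance} we have $\Ad_g\lie g_0=\lie g_0$ and $\Ad_{g^{-1}}^*\xi\in\lie p_1$, so the second component lands in $\lie p_1$ and the coset $\Ad_gx+\lie g_0$ only depends on $x+\lie g_0$. The only delicate point is the freedom in the choice of $X_\xi$: if $X_\xi,X_\xi'$ both satisfy $(X_\xi,\xi^L),(X_\xi',\xi^L)\in\Gamma(\mathsf D_G)$, then $X_\xi-X_\xi'\in\Gamma(\mathsf{G_0})$, and by Proposition \ref{oncardis} this means $(X_\xi-X_\xi')(g)\in T_eR_g\lie g_0$, so $T_gR_{g^{-1}}(X_\xi-X_\xi')(g)\in\lie g_0$ and the ambiguity is killed by passing to the quotient $\lie g/\lie g_0$.

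For the identity axiom, by Proposition \ref{De} we have $\mathsf D_G(e)=\lie g_0\operp\lie p_1$, hence $X_\xi(e)\in\lie g_0$. Since $\Ad_ex=x$ and $T_eR_{e^{-1}}=\mathrm{id}$, the formula evaluated at $g=e$ gives $(x+X_\xi(e)+\lie g_0,\xi)=(x+\lie g_0,\xi)$.

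The composition axiom is the main content. For the second component it is immediate: $\Ad_{(gh)^{-1}}^*=\Ad_{h^{-1}}^*\Ad_{g^{-1}}^*$. For the first component, applying $T_{gh}R_{(gh)^{-1}}$ to both sides of the inclusion in Lemma \ref{too_useful}, I use the identities
\[
R_{(gh)^{-1}}\circ L_g=\mathrm{conj}_g\circ R_{h^{-1}},\qquad R_{(gh)^{-1}}\circ R_h=R_{g^{-1}},
\]
which give, after differentiation (and using $R_{h^{-1}}(h)=e$ for the first),
\[
T_{gh}R_{(gh)^{-1}}\circ T_hL_g=\Ad_g\circ T_hR_{h^{-1}},\qquad
T_{gh}R_{(gh)^{-1}}\circ T_gR_h=T_gR_{g^{-1}}.
\]
Lemma \ref{too_useful} therefore yields
\[
T_{gh}R_{(gh)^{-1}}X_\xi(gh)\in \Ad_g\,T_hR_{h^{-1}}X_\xi(h)+T_gR_{g^{-1}}X_{\Ad_{h^{-1}}^*\xi}(g)+\lie g_0,
\]
since $\mathsf{G_0}(gh)=T_eR_{gh}\lie g_0$ by Proposition \ref{oncardis}. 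Combined with $\Ad_{gh}x=\Ad_g\Ad_hx$ this is exactly the equality modulo $\lie g_0$ expressing $A(gh,(x+\lie g_0,\xi))=A(g,A(h,(x+\lie g_0,\xi)))$, finishing the proof. The main obstacle is purely bookkeeping: matching the two natural right-translation trivializations at $g$, $h$, and $gh$, which is handled by the two tangent-map identities above.
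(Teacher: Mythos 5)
Your proof is correct and follows essentially the same route as the paper: well-definedness is handled via the $\Ad$-invariance of $\lie g_0$ and the fact that the ambiguity in $X_\xi$ lies in $\Gamma(\mathsf{G_0})=\lie g_0^R$, and the composition law is reduced to Lemma \ref{too_useful} by applying $T_{gh}R_{(gh)^{-1}}$ and the two right-translation identities, which is exactly the computation in the paper with $(g',g)$ in place of $(g,h)$. Your explicit check of the identity axiom via Proposition \ref{De} is a small addition the paper leaves implicit; the only nitpick is that the coadjoint composition should read $\Ad_{(gh)^{-1}}^*\xi=\Ad_{g^{-1}}^*\bigl(\Ad_{h^{-1}}^*\xi\bigr)$, which is what your argument actually uses.
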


\begin{proof}
We prove first the fact that the action is well-defined, that is, that it
doesn't depend on the choices of $x$ and $X_\xi$. 
Choose $x'\in\lie g$ such that $x'+\lie g_0=x+\lie g_0$. Then 
 $x'-x=:x_0\in\lie g_0$ and 
\[\Ad_{g}x'=\Ad_g(x+x_0)=\Ad_gx+\Ad_gx_0\in\Ad_gx+\lie g_0\]
 for all $g\in G$, since $\lie g_0$ is $\Ad_g$-invariant for all $g\in G$.

Next, if $X_\xi$ and $X_\xi'\in\mx(G)$ are such that
$(X_\xi,\xi^L)$ and $(X_\xi',\xi^L)\in\Gamma(\mathsf{D}_G)$, the difference
$X_\xi'-X_\xi$ is a section of $\mathsf{G_0}$ and hence we can write $(X_\xi'-X_\xi)(g)=T_eR_gx_0$
with $x_0\in\lie g_0$.
This leads to 
\begin{align*}
T_gR_{g\inv}X_\xi'(g)=T_gR_{g\inv}X_\xi(g)+x_0\in
T_gR_{g\inv}X_\xi(g)+\lie g_0.
\end{align*}
The map $A$ is hence shown to be well-defined. We show next that $A$ is an
action of $G$ on $\lie g/\lie g_0\times\lie p_1$.
We have to show that 
\[A\left(g',A(g,(x+\lie g_0,\xi))\right)=A(g'g,(x+\lie g_0,\xi))\]
for all $g,g'\in G$, $x\in\lie g$ and $\xi\in\lie p_1$. 

We have with the same arguments as above
\begin{align*}
A_{g'g}(x+\lie
g_0,\xi)
=&\left(\Ad_{g'}(\Ad_{g}x)+T_{g'g}R_{g\inv{g'}\inv}X_\xi(g'g)+\lie g_0,
\Ad_{{g'}\inv}^*(\Ad_{g\inv}^*\xi)\right)\\
\overset{\eqref{eq4}}
=&\left(\Ad_{g'}(\Ad_{g}x)+T_{g'g}R_{g\inv{g'}\inv}(T_gL_{g'}X_\xi(g)+T_{g'}R_gX_{\Ad_{g\inv}^*\xi}(g'))
+\lie g_0,
\Ad_{{g'}\inv}^*(\Ad_{g\inv}^*\xi)\right)\\
=&\left(\Ad_{g'}(\Ad_{g}x+T_{g}R_{g\inv}X_\xi(g))+T_{g'}R_{{g'}\inv}X_{\Ad_{g\inv}^*\xi}(g')
+\lie g_0,
\Ad_{{g'}\inv}^*(\Ad_{g\inv}^*\xi)\right)\\
=&\,A_{g'}\left(\Ad_{g}x+T_{g}R_{g\inv}X_\xi(g)+\lie
  g_0,\Ad_{g\inv}^*\xi\right)=\,A_{g'}\left(A_{g}(x+\lie g_0,\xi)\right).
\end{align*}
\end{proof}
\begin{remark}\label{action_of_GmodN}
Assume that the bracket on $\lie p_1\times \lie p_1$ has image in $\lie
  p_1$.
\begin{enumerate}
\item 
We have $N\subseteq G_{(x+\lie
g_0,\xi)}$, where $G_{(x+\lie
g_0,\xi)}$ is the isotropy group of $(x+\lie
g_0,\xi)\in\lie g/\lie g_0\times\lie p_1$.

Indeed, for  $n\in N$, we have  
$
 \Ad_{n}x
\in x+\lie g_0$,
for all $x\in\lie g$
and  hence $\Ad_{n\inv}^*\xi=\xi$ for all $\xi\in\lie p_1$.
The proof of this is easy, see also
 \cite{OrRa04}, Lemma 2.1.13.
Since
$\left(X_{\xi},\xi^L\right)\in\Gamma(\mathsf{D}_G)$ and $n\in
N$, we know by Theorem \ref{action_of_N} that
$\left(R_{n}^*X_{\xi},R_{n}^*\xi^L\right)\in\Gamma(\mathsf{D}_G)$.
Hence, we have $R_{n}^*X_{\xi}(e)\in\lie g_0$ because
$\mathsf{D}_G(e)=\lie g_0\times\lie p_1$, that is, 
$T_{n}R_{n\inv}X_{\xi}(n)\in\lie g_0$.
Using this and $\Ad_nx\in x+\lie g_0$, we get 
$\Ad_{n}x+T_{n}R_{n\inv}X_\xi(n)\in x+\lie g_0$.
\item Thus, we get a well-defined action $\bar A$ of $G/N$ on $\lie g/\lie
  g_0\times\lie p_1$,
 that is given by $\bar A(gN,(x+\lie
  g_0,\xi))=A(g,(x+\lie g_0,\xi))$
for all $g\in G$.
\end{enumerate} 
\end{remark}

In fact $(G,\mathsf D_G)$ is integrable and  $N$ is closed in $G$, the next theorem shows that
$\bar A$ is  the adjoint action of $G/N$ on $\lie g/\lie g_0\times\lie
p_1$ integrating the adjoint action of $\lie g/\lie g_0$ defined by the
bracket on $\lie g/\lie g_0\times\lie p_1$.

 \begin{theorem}
Assume that $(G,\mathsf D_G)$ is an integrable Dirac Lie group.
The adjoint action of $\lie g/\lie g_0\simeq \lie g/\lie g_0\times\{0\}\subseteq\lie
g/\lie g_0\times\lie p_1 $ on $\lie
g/\lie g_0\times\lie p_1 $ ``integrates'' to the action $A$ of $G$ on 
$\lie g/\lie g_0\times\lie p_1 $ in the sense that
\begin{align*}
\left.\frac{d}{dt}\right\an{t=0}A\left(\exp(ty),(x+\lie g_0,\xi)\right)
=\left[(y+\lie g_0,0),(x+\lie g_0,\xi)\right]
\end{align*}
for all $y\in\lie g$ and $(x+\lie g_0,\xi)\in\lie g/\lie g_0\times\lie p_1$.
\end{theorem}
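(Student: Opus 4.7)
The plan is to differentiate the formula for $A(\exp(ty),(x+\lie g_0,\xi))$ componentwise at $t=0$ and verify that the result coincides with the bracket
\[
[(y+\lie g_0,0),(x+\lie g_0,\xi)]=\left([y,x]-\ad_\xi^*y+\lie g_0,\;\ad_y^*\xi\right)
\]
furnished by Theorem \ref{bracket}. Note that the integrability hypothesis is used only to invoke Theorem \ref{bracket} (so that the target bracket is defined) and Theorem \ref{liealgebra} (ensuring $[\cdot,\cdot]$ has image in $\lie p_1$, so that the $\lie g/\lie g_0$-entry $\ad_\xi^*y$ makes sense).

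For the $\lie p_1$-component, $\frac{d}{dt}\big|_{t=0}\Ad_{\exp(-ty)}^*\xi = \ad_y^*\xi$ is a direct computation using the convention $\ad_x^*\xi(z)=\xi([z,x])$; this already matches the second entry of the bracket. For the $\lie g/\lie g_0$-component, the first summand $\Ad_{\exp(ty)}x$ differentiates to $[y,x]$, and the remaining term is
\[
\left.\frac{d}{dt}\right|_{t=0}T_{\exp(ty)}R_{\exp(-ty)}X_\xi(\exp(ty)) \;=\; (\ldr{y^L}X_\xi)(e),
\]
since the flow of the left-invariant vector field $y^L$ is right translation $R_{\exp(ty)}$. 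Lemma \ref{lie_der_of_Xmu} then identifies $(\ldr{y^L}X_\xi)(e)+\lie g_0$ with $-\ad_\xi^*y$ in $\lie g/\lie g_0$, so the first component of $\frac{d}{dt}|_{t=0}A(\exp(ty),(x+\lie g_0,\xi))$ is $[y,x]-\ad_\xi^*y+\lie g_0$, as required.

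The verification is essentially bookkeeping; the only nontrivial ingredient is the recognition of $(\ldr{y^L}X_\xi)(e)$ and the application of Lemma \ref{lie_der_of_Xmu}, which carries the entire geometric content. The main obstacle is therefore just maintaining the correct sign conventions for $\Ad^*$, $\ad^*$ and the direction of the flow of $y^L$; well-definedness on $\lie g/\lie g_0$ is automatic because $\lie g_0$ is an ideal (so replacing $x$ by $x+x_0$ with $x_0\in\lie g_0$ changes $[y,x]$ only by $[y,x_0]\in\lie g_0$).
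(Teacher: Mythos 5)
Your proposal is correct and follows essentially the same route as the paper's proof: differentiate the two components of $A(\exp(ty),(x+\lie g_0,\xi))$ at $t=0$, recognize the derivative of $T_{\exp(ty)}R_{\exp(-ty)}X_\xi(\exp(ty))$ as $(\ldr{y^L}X_\xi)(e)$, and apply Lemma \ref{lie_der_of_Xmu} to identify its class modulo $\lie g_0$ with $-\ad_\xi^*y$. Your side remark on where integrability enters (only to make the target bracket of Theorem \ref{bracket} available) is a correct observation that the paper leaves implicit.
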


\begin{proof}
Choose  $x,y\in\lie g$ and $\xi\in\lie p_1$ and compute
\begin{align*}
\left.\frac{d}{dt}\right\an{t=0}A\left(\exp(ty),(x+\lie g_0,\xi)\right)
=&\left.\frac{d}{dt}\right\an{t=0}\left(\Ad_{\exp(ty)}x
+T_{\exp(ty)}R_{\exp(-ty)}X_\xi(\exp(ty))+\lie
g_0, \Ad_{\exp(-ty)}^*\xi\right)\\
=&\left([y,x]+\ldr{y^L}X_\xi(e)+\lie g_0,\ad_y^*\xi\right)\overset{\eqref{left_inv_der_of_Xmu}}=
\left([y,x]-\ad_\xi^*y+\lie g_0,\ad_y^*\xi\right)\\
=&\left[(y+\lie g_0,0),(x+\lie g_0,\xi)\right].
\end{align*}
\end{proof}

\section{Dirac homogeneous spaces}\label{section_Dirachomogeneous}
\subsection{Definition and properties}
Let $(G,\mathsf{D}_G)$ be a Dirac Lie group and $H$ a closed connected 
Lie subgroup of $G$. Let 
$G/H=\{gH\mid g\in G\}$ be the homogeneous space defined as the quotient space 
by the right action of $H$ on $G$. Let $q:G\to G/H$ be the quotient map.
For $g\in G$, let $\sigma_g:G/H\to G/H$ be the map
defined by $\sigma_g(g'H)=gg'H$. 

\begin{definition}
Let $(G,\mathsf{D}_G)$ be a Dirac Lie group and $H$ a closed connected 
Lie subgroup of $G$. Let 
$G/H$ be  endowed with a Dirac structure
$\mathsf{D}_{G/H}$.
The pair $(G/H, \mathsf D_{G/H})$ is a 
\emph{Dirac homogeneous space of $(G,\mathsf{D}_G)$} if 
the left action
\[\sigma:G\times G/H\to G/H, \quad \sigma_g(g'H)=gg'H\]
is a forward Dirac map, where $G\times G/H$
is endowed with the product Dirac structure 
$\mathsf D_G\oplus\mathsf D_{G/H}$.
\end{definition}

\begin{remark}
If $G/H$ is a homogeneous space of a Lie group $G$, there 
is an induced \emph{Lie groupoid action} of $TG\operp T^*G\rr \lie g^*$ 
on $J: T(G/H)\operp T^*(G/H)\to \lie g^*$,
$(v_{gH},\alpha_{gH})\mapsto (T_e(q\circ R_g))^*\alpha_{gH}$.
The Dirac manifold
$(G/H,\mathsf D_{G/H})$ is a Dirac homogeneous space of $(G,\mathsf D_G)$ if
and only if 
this groupoid action restricts to a Lie groupoid action of $\mathsf D_G\rr
\lie p_1$ on $J\an{\mathsf D_{G/H}}: \mathsf D_{G/H}
\to\lie p_1$.
This will be shown in \cite{Jotz10b}, where Dirac homogeneous spaces 
of Dirac Lie groupoids will be defined in this manner.
\end{remark}

\begin{remark}\label{remark_on_property_*}
The definition is also easily shown to be equivalent to the following:
for all $gH\in G/H$ and $(v_{gH},\alpha_{gH})\in \mathsf{D}_{G/H}(gH)$, there exist
$(w_g,\beta_g)\in \mathsf{D}_G(g)$ and $(u_{eH},\gamma_{eH})\in \mathsf{D}_{G/H}(eH)$ 
such that
\[\beta_g=(T_gq)^*(\alpha_{gH}), \quad
\gamma_{eH}=(T_{eH}\sigma_g)^*(\alpha_{gH}),
\quad \text{ and }\quad v_{gH}=T_gqw_g+T_{eH}\sigma_gu_{eH}.\]

This yields immediately: for all $h\in H$ and $(v_{eH},\alpha_{eH})\in
\mathsf{D}_{G/H}(eH)=\mathsf{D}_{G/H}(hH)$, there exist
$(w_h,\beta_h)\in \mathsf{D}_G(h)$ and $(u_{eH},\gamma_{eH})\in \mathsf{D}_{G/H}(eH)$ 
such that
$\beta_h=(T_hq)^*(\alpha_{eH})$, $\gamma_{eH}=(T_{eH}\sigma_h)^*(\alpha_{eH})$, 
and  $v_{eH}=T_hqw_h+T_{eH}\sigma_hu_{eH} $. 
\end{remark}
\begin{definition}\label{the_property}
Let $(G,\mathsf{D}_G)$ be a Dirac Lie group and $H$ a closed connected 
Lie subgroup of $G$.
We say that a subspace $S\subseteq \lie g/\lie h\times\left(\lie
  g/\lie h\right)^*$ has property $(*)$ if 
for all $h\in H$ and $(\bar x, \bar\xi)\in S$, there exist
$(w_h,\beta_h)\in \mathsf{D}_G(h)$ and $(\bar y,\bar \eta)\in S$ 
such that
$\beta_h=(T_hq)^*(\bar \xi)$, $\bar \eta=(T_{eH}\sigma_h)^*(\bar \xi)$, 
and  $\bar x=T_hqw_h+T_{eH}\sigma_h\bar y $.
\end{definition}

By Remark \ref{remark_on_property_*}, 
if $(G/H,\mathsf D_{G/H})$ is a Dirac homogeneous space of the Dirac Lie group
$(G,\mathsf D_G)$, then $\mathsf D_{G/H}(e)$ has property $(*)$.
This leads to the following lemma.
\begin{lemma}\label{lem18}
Let $\lie D_{G/H}$ be a Dirac subspace of $\lie g/\lie h\times(\lie g/\lie
h)^*$ with the property $(*)$.
Then the inclusions  $(T_eq)^*\bar{\lie p_1}\subseteq \lie
p_1$ and
$T_eq\lie g_0\subseteq\bar{\lie g_0}$ hold, where $\bar{\lie p_1}\subseteq
\left(\lie g/\lie h\right)^*$ and $\bar{\lie g_0}\subseteq \lie g/\lie h$
are the subspaces defined by $\lie{D}_{G/H}$.
\end{lemma}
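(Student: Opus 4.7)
The plan is to prove the two inclusions in Lemma \ref{lem18} by first establishing the codistribution inclusion $(T_eq)^*\bar{\lie p_1}\subseteq \lie p_1$ directly from the property $(*)$ evaluated at $h=e$, and then deducing the distribution inclusion $T_eq\,\lie g_0\subseteq\bar{\lie g_0}$ as a formal consequence by annihilator duality.

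For the first inclusion, pick any $\bar\xi\in\bar{\lie p_1}$, so that there exists $\bar x\in\lie g/\lie h$ with $(\bar x,\bar\xi)\in\lie D_{G/H}$. Apply property $(*)$ with $h=e$: this produces $(w_e,\beta_e)\in\mathsf{D}_G(e)$ (and some auxiliary pair in $\lie D_{G/H}$) with the relation $\beta_e=(T_eq)^*\bar\xi$. Now invoke Proposition \ref{De}, which gives $\mathsf{D}_G(e)=\lie g_0\operp\lie p_1$; hence $\beta_e\in\lie p_1$, and therefore $(T_eq)^*\bar\xi\in\lie p_1$, as required.

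The second inclusion follows from the first by pure linear algebra. Since $\lie D_{G/H}$ is a Dirac (Lagrangian) subspace of $\lie g/\lie h\operp(\lie g/\lie h)^*$, the same annihilator relations as in Subsection 2.2 hold, namely $\bar{\lie g_0}=\bar{\lie p_1}^\circ$ inside $\lie g/\lie h$; analogously $\lie g_0=\lie p_1^\circ$ in $\lie g$. Given $x\in\lie g_0$ and any $\bar\xi\in\bar{\lie p_1}$, one has
\[
\bar\xi(T_eq\,x)=((T_eq)^*\bar\xi)(x)=0
\]
because $(T_eq)^*\bar\xi\in\lie p_1=\lie g_0^\circ$ by the first part. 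Hence $T_eq\,x$ annihilates all of $\bar{\lie p_1}$, which means $T_eq\,x\in\bar{\lie p_1}^\circ=\bar{\lie g_0}$.

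No serious obstacle is expected: the only substantive input is Proposition \ref{De}, which makes $\beta_e$ land automatically in $\lie p_1$, and the whole argument is otherwise a transcription of the defining relation $\beta_h=(T_hq)^*\bar\xi$ at the neutral element, together with the standard duality between $(T_eq)^*$ on covectors and $T_eq$ on vectors. The slight subtlety worth double-checking in writing up is simply that one is really allowed to take $h=e$ in property $(*)$, which is immediate since $H$ contains the identity and the maps $T_eq$ and $T_{eH}\sigma_e$ reduce correctly.
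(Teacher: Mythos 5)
Your proof is correct and follows essentially the same route as the paper: apply property $(*)$ at $h=e$ to obtain $(w_e,\beta_e)\in\mathsf D_G(e)$ with $\beta_e=(T_eq)^*\bar\xi\in\lie p_1$, then deduce the second inclusion by annihilator duality. The paper leaves the duality step and the identification $\beta_e\in\lie p_1$ implicit, whereas you spell them out (via Proposition \ref{De} and the relations $\lie g_0=\lie p_1^\circ$, $\bar{\lie g_0}=\bar{\lie p_1}^\circ$), but the argument is the same.
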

\begin{proof}
Choose $\alpha\in\bar{\lie p_1}$, then there exists $v\in\lie g/\lie h$ such
that $(v,\alpha)\in\lie{D}_{G/H}$. By $(*)$, there exist $(w_e,\beta_e)\in
\mathsf{D}_G(e)$ and 
$(u_{eH},\gamma_{eH})\in \lie{D}_{G/H}$ 
such that
$\beta_e=(T_eq)^*\alpha$, $\gamma_{eH}=(T_{eH}\sigma_h)^*\alpha$, 
and  $v=T_eqw_e+u_{eH} $.
The covector $\beta_e=(T_eq)^*\alpha$ is an element of $\lie p_1$. The second
inclusion is a consequence of the first.
\end{proof}

We call in the following $\lie D\subseteq \lie g\times \lie g^*$ 
 the pullback Dirac subspace $\lie D=(T_eq)^*\lie D_{G/H}$ of 
$\lie D_{G/H}\subseteq \lie g/\lie h\times(\lie g/\lie
h)^*$ with Property $(*)$,
that is 
 \[\lie D= \big\{(x,\xi)\mid \exists \bar\xi\in(\lie g/\lie h)^* \text{ such that }
(T_eq)^*\bar \xi=\xi
\text{ and } (T_eqx,\bar\xi)\in \lie D_{G/H}
\big\}.
\]

\begin{lemma}\label{lemma_on_lieD}
Let $\lie p_1'\subseteq \lie g^*$ and $\lie g_0'\subseteq \lie g$ 
be the vector subspaces 
associated to the Dirac subspace $\lie D\subseteq \lie g\times \lie g^*$. Then we have
the inclusions
\[\lie g_0+\lie h\subseteq \lie g_0'\quad\text{ and}\quad
\lie p_1'\subseteq\lie p_1\cap\lie h^\circ.\]
Hence, we have  $\lie g_0\times\{0\}\subseteq\lie D \subseteq \lie g\times\lie p_1$
and the vector space $\bar{\lie D}:=\lie D/(\lie g_0\times\{0\})$ can be
seen as  a subset of $\lie g/\lie g_0\times\lie p_1$.
\end{lemma}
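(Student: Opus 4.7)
The plan is to chase through the definitions, using Lemma \ref{lem18} as the crucial input for the nontrivial inclusions. I will establish the two containments independently and then observe that the conclusion about $\bar{\lie D}$ is formal.

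First, I will show $\lie h \subseteq \lie g_0'$: any $y \in \lie h$ satisfies $T_eq\,y = 0$, and since $(0,0)\in\lie D_{G/H}$, taking $\bar\xi = 0$ in the defining formula for $\lie D$ gives $(y,0)\in\lie D$. Next, for $\lie g_0 \subseteq \lie g_0'$, I will invoke the conclusion of Lemma \ref{lem18} that $T_eq\,\lie g_0 \subseteq \bar{\lie g_0}$: if $x\in\lie g_0$, then $(T_eq\,x,0)\in\lie D_{G/H}$, so again taking $\bar\xi = 0$ we get $(x,0)\in\lie D$. Together this gives $\lie g_0 + \lie h \subseteq \lie g_0'$.

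For the second inclusion, I will start with $\xi \in \lie p_1'$, i.e.\ there is some $x\in\lie g$ with $(x,\xi)\in\lie D$, which by definition of the pullback gives $\bar\xi\in(\lie g/\lie h)^*$ with $(T_eq)^*\bar\xi = \xi$ and $(T_eq\,x,\bar\xi)\in\lie D_{G/H}$. In particular $\bar\xi \in \bar{\lie p_1}$, and the first part of Lemma \ref{lem18} gives $\xi = (T_eq)^*\bar\xi \in \lie p_1$. Moreover, since $\ker T_eq = \lie h$, for any $y\in\lie h$ we have $\xi(y) = \bar\xi(T_eq\,y) = 0$, so $\xi\in\lie h^\circ$. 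Hence $\lie p_1'\subseteq \lie p_1\cap \lie h^\circ$.

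The final statements are then formal consequences. The inclusion $\lie g_0\times\{0\}\subseteq\lie D$ is a restatement of $\lie g_0\subseteq\lie g_0'$, and $\lie D\subseteq\lie g\times\lie p_1$ follows from the fact that the second component of every element of $\lie D$ lies in $\lie p_1'\subseteq\lie p_1$. Consequently the quotient $\bar{\lie D}=\lie D/(\lie g_0\times\{0\})$ makes sense and embeds canonically into $(\lie g/\lie g_0)\times\lie p_1$. I do not anticipate any serious obstacle here: the whole lemma is a direct definition-chase once Lemma \ref{lem18} is in hand, so the main care is simply to keep track of where each $(T_eq)^*$ is applied and to check that taking $\bar\xi = 0$ is legitimate in the zero-covector arguments above.
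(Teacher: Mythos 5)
Your proof is correct and follows the same route as the paper, which simply cites Lemma \ref{lem18} together with the definition of $\lie D$ and leaves the details to the reader; you have filled in exactly those details (including the legitimate use of $\bar\xi=0$, which is in fact forced since $(T_eq)^*$ is injective). No gaps.
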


\begin{proof}
We know from Lemma \ref{lem18} 
that $T_eq\lie g_0\subseteq \bar{\lie g_0}$ and 
$(T_eq)^*\bar{\lie p_1}\subseteq\lie p_1$.
The inclusions here follow directly from this and the definition of $\lie D$.
\end{proof}

As in the case of a Dirac Lie group, the distributions 
$\mathsf{G_0}$ and $\mathsf{P_1}$
are regular. This fact is proved in the next proposition. In order to simplify the
notation,
we write $\mathsf{G_0}$ and $\mathsf{P_1}$, respectively, for both the
distributions, respectively codistributions, defined by $\mathsf D_G$ on $G$
and by $\mathsf D_{G/H}$ on $G/H$. It will always be clear from the context
which object is to be considered.
\begin{proposition}\label{shape_of_gnul_and_pone}
Let $(G/H, \mathsf{D}_{G/H})$ be a Dirac homogeneous space of
$(G,\mathsf{D}_G)$. Then the distribution $\mathsf{G_0}$ defined by 
$\mathsf D_{G/H}$ on $G/H$ is a subbundle of
 $T(G/H)$. 
Consequently, the codistribution $\mathsf{P_1}$ also has constant
rank on $G/H$.
More explicitly, the distribution $\mathsf{G_0}$ and the  codistribution
$\mathsf{P_1}$ are given by
\[\mathsf{G_0}(gH)=T_{eH}\sigma_g\mathsf{G_0}(eH)
\quad \text{ and } \quad \mathsf{P_1}(gH)=(T_{gH}\sigma_{g\inv})^*\mathsf{P_1}(eH)
\]
for all $gH\in G/H$.
\end{proposition}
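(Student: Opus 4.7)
The strategy is to prove the displayed formula for $\mathsf{P_1}$ first, by applying the forward Dirac property of $\sigma : G\times G/H\to G/H$ at two different base points, and then to read off the formula for $\mathsf{G_0}$ by taking annihilators. The subbundle and constant-rank statements will follow immediately, because on each fibre the two objects are described as the image of a fixed subspace of $T_{eH}(G/H)$, respectively $T_{eH}^*(G/H)$, under the linear isomorphism induced by the diffeomorphism $\sigma_g$, and this depends smoothly on $g$.

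For the inclusion $\mathsf{P_1}(gH)\subseteq (T_{gH}\sigma_{g\inv})^*\mathsf{P_1}(eH)$, I would take $\alpha\in\mathsf{P_1}(gH)$ together with some $v$ for which $(v,\alpha)\in\mathsf{D}_{G/H}(gH)$. Since $\sigma(g,eH)=gH$, Remark~\ref{remark_on_property_*} supplies $(u_{eH},\gamma_{eH})\in\mathsf{D}_{G/H}(eH)$ with $\gamma_{eH}=(T_{eH}\sigma_g)^*\alpha$, so $\gamma_{eH}\in\mathsf{P_1}(eH)$; inverting the cotangent isomorphism via the identity $(T_{gH}\sigma_{g\inv})^*\circ(T_{eH}\sigma_g)^*=\mathrm{id}$ then gives $\alpha=(T_{gH}\sigma_{g\inv})^*\gamma_{eH}$. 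For the reverse inclusion, I would pick $\gamma\in\mathsf{P_1}(eH)$ and $(u,\gamma)\in\mathsf{D}_{G/H}(eH)$, and apply the forward Dirac property of $\sigma$ at the base point $(g\inv,gH)$, which also lies above $eH$ under $\sigma$. The decomposition delivered there has covector component $\gamma'_{gH}=(T_{gH}\sigma_{g\inv})^*\gamma$ paired with some $u'_{gH}$ such that $(u'_{gH},\gamma'_{gH})\in\mathsf{D}_{G/H}(gH)$, whence $(T_{gH}\sigma_{g\inv})^*\gamma\in\mathsf{P_1}(gH)$.

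Once $\mathsf{P_1}(gH)=(T_{gH}\sigma_{g\inv})^*\mathsf{P_1}(eH)$ is established, the formula for $\mathsf{G_0}$ drops out by annihilators: a vector $v\in T_{gH}(G/H)$ lies in $\mathsf{P_1}(gH)^\circ$ precisely when $\gamma(T_{gH}\sigma_{g\inv}v)=0$ for every $\gamma\in\mathsf{P_1}(eH)$, that is, when $T_{gH}\sigma_{g\inv}v\in\mathsf{P_1}(eH)^\circ=\mathsf{G_0}(eH)$, or equivalently $v\in T_{eH}\sigma_g\,\mathsf{G_0}(eH)$. Since $T_{eH}\sigma_g$ is a linear isomorphism varying smoothly with $g$, both $\mathsf{P_1}$ and $\mathsf{G_0}$ then have constant rank on $G/H$ and are smooth subbundles of $T^*(G/H)$ and $T(G/H)$, respectively.

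The point I expect to require the most care, though it is not a deep one, is checking that the formulas depend only on the coset $gH$ and not on the chosen representative $g$. This reduces to the $H$-invariance of $\mathsf{P_1}(eH)$ under $h\mapsto(T_{eH}\sigma_h)^*$ and of $\mathsf{G_0}(eH)$ under $h\mapsto T_{eH}\sigma_h$, which is exactly property $(*)$ of Definition~\ref{the_property} applied at $h\in H$ (so that $\sigma(h,eH)=eH$). Apart from this check and the routine bookkeeping of inverse tangent and cotangent maps, the proof is essentially one short computation per inclusion; the conceptual shortcut is to work with $\mathsf{P_1}$ rather than $\mathsf{G_0}$ directly, because the covector component of the forward Dirac decomposition involves $\sigma_g$ alone and so transforms cleanly, whereas the tangent component mixes both factors of $G\times G/H$ and would make a direct treatment of $\mathsf{G_0}$ substantially more awkward.
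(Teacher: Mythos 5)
Your proposal is correct and follows essentially the same route as the paper: both inclusions for $\mathsf{P_1}$ are obtained by applying the forward Dirac property of $\sigma$ (Remark \ref{remark_on_property_*}) at the points $(g,eH)$ and $(g\inv,gH)$ respectively, and the formula for $\mathsf{G_0}$ is then read off by taking annihilators. The extra check of independence of the coset representative is harmless but not needed, since $\mathsf{P_1}(gH)$ is already defined intrinsically from $\mathsf{D}_{G/H}(gH)$ and the proven identity holds for every representative.
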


\begin{proof}
We show that $\mathsf
{P_1}(gH)=(T_{gH}\sigma_{g\inv})^*\mathsf {P_1}(eH)$ for all $g\in G$: choose first
$\bar \xi\in\mathsf{P_1}(eH)$, then there exists
$\bar x\in T_{eH}(G/H)$ such that $(\bar
x,\bar\xi)\in\mathsf{D}_{G/H}(eH)$
and hence
$w_{g\inv}\in T_{g\inv}G$ and $u_{gH}\in T_{gH}(G/H)$ such that
$(w_{g\inv},(T_{g\inv}q_H)^*\bar\xi)\in\mathsf D_G(g\inv)$, 
$(u_{gH},(T_{gH}\sigma_{g\inv})^*\bar\xi)\in\mathsf D_{G/H}(gH)$ and 
$T_{g\inv}q_Hw_{g\inv}+T_{eH}\sigma_{g\inv}u_{gH}=\bar x$. This yields immediately 
$(T_{gH}\sigma_{g\inv})^*\mathsf {P_1}(eH)\subseteq \mathsf P_1(gH)$.
The other implication is a direct consequence of Remark \ref{remark_on_property_*}.

Thus, the codistribution $\mathsf{P_1}$ is a subbundle of $T^*(G/H)$ and its
annihilator is equal to $\mathsf{G_0}$, which is consequently given by
$\mathsf{G_0}(gH)=T_{eH}\sigma_g\mathsf{G_0}(eH)$ for all $g\in G$.
\end{proof}

Finally, we see that the notion of Dirac homogeneous spaces generalizes the
Poisson homogeneous spaces. The proof can be done  as in
Example \ref{eqDirac-Poisson}.
\begin{example}
Let $(G,\pi_G)$ be a Poisson Lie group, $H$ a closed subgroup of $G$ and $\pi$ a
Poisson bivector on $G/H$. Let $(G,\mathsf{D}_G)$ and $(G/H,\mathsf{D}_{G/H})$ be the
Dirac Lie group and Dirac manifold induced by $(G,\pi_G)$ and
$(G/H,\pi)$, respectively.
The Dirac manifold $(G/H,\mathsf{D}_{G/H})$ is a Dirac homogeneous space of
$(G,\mathsf{D}_G)$ if and only if $(G/H,\pi)$ is a Poisson homogeneous space
of $(G,\pi_G)$. 
\end{example}

\subsection{The pullback to $G$ of a homogeneous Dirac structure}\label{subsectionD'}
 Consider a Dirac Lie group $(G,\mathsf D_G)$ and let $\lie D$ be a Dirac subspace of
$\lie g\times\lie g^*$ satisfying \[\lie g_0\times\{0\}\subseteq\lie D\subseteq
\lie g\times\lie p_1.\quad \quad (**) \]  We denote by $\lie g_0'\subseteq \lie g$ and $\lie
p_1'\subseteq \lie g^*$ the subspaces defined by $\lie D$. We have 
$\lie g_0\subseteq \lie g_0'$ and $\lie p_1'\subseteq \lie p_1$
and we can define the generalized distribution  $\mathsf D'\subseteq\mathsf P_G$  by
\begin{equation}\label{definition_of_D'}
\mathsf D'(g):=\left\{\left.(x^L(g)+v_g,\xi^L(g))\in \mathsf P_G(g)\right| (x,\xi)\in\lie D 
\text{ and } (v_g,\xi^L(g))\in\mathsf D_G(g)\right\}
\end{equation}
for all $g\in G$. Note that $\mathsf D'$
is smooth since
it is  spanned by the smooth sections  $(X_\xi+x^L,\xi^L)$, with $(x,\xi)\in\lie D$
and $X_\xi\in\mx(G)$  such that $(X_\xi,\xi^L)\in\Gamma(\mathsf D_G)$.

\begin{proposition}\label{defD'}
Let $(G,\mathsf D_G)$ be a Dirac Lie group and $\lie D$  a Dirac subspace of
$\lie g\times\lie g^*$ satisfying $(**)$.
The induced subset $\mathsf{D}'\subseteq TG\operp T^*G$ as in \eqref{definition_of_D'} is
a Dirac structure on $G$. 
\end{proposition}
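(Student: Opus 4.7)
The plan is to verify that $\mathsf{D}'$ is simultaneously smooth, pointwise isotropic, and of rank equal to $\dim G$; the latter two force $\mathsf{D}'$ to be Lagrangian. First I would observe that the inclusion $\lie{p}_1'\subseteq\lie{p}_1$ from $(**)$ ensures that for each $(x,\xi)\in\lie{D}$ there exists some $X_\xi\in\mx(G)$ with $(X_\xi,\xi^L)\in\Gamma(\mathsf{D}_G)$, so \eqref{definition_of_D'} is not vacuous and the smooth sections $(x^L+X_\xi,\xi^L)$, $(x,\xi)\in\lie{D}$, span $\mathsf{D}'$ pointwise. Any two admissible choices of $X_\xi$ differ by a section of $\mathsf{G_0}=\lie{g}_0^L$, and since $\lie{g}_0\subseteq\lie{g}_0'$ this difference can be absorbed into a redefinition of $x\in\lie{g}_0'$, so the spanning family really is intrinsic to $\mathsf{D}'$.

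For isotropy, pick two spanning sections associated to $(x,\xi),(y,\eta)\in\lie{D}$ and compute at any $g\in G$:
\[
\left\langle (x^L(g)+X_\xi(g),\xi^L(g)),(y^L(g)+X_\eta(g),\eta^L(g))\right\rangle = \bigl(\xi(y)+\eta(x)\bigr)+\bigl(\xi^L(g)(X_\eta(g))+\eta^L(g)(X_\xi(g))\bigr).
\]
The first bracket uses left invariance ($\xi^L(g)(y^L(g))=\xi(y)$) and vanishes because $\lie{D}$ is Lagrangian in $\lie{g}\oplus\lie{g}^*$; the second vanishes because $(X_\xi,\xi^L)$ and $(X_\eta,\eta^L)$ are sections of the Lagrangian bundle $\mathsf{D}_G$.

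For the rank I would fix a linear lift $\xi\mapsto X_\xi$ with $X_0=0$ and consider the evaluation map $\Psi_g:\lie{D}\to\mathsf{D}'(g)$, $(x,\xi)\mapsto(x^L(g)+X_\xi(g),\xi^L(g))$. Surjectivity is exactly the spanning observation above, and injectivity is immediate from left invariance: $\Psi_g(x,\xi)=0$ implies $\xi^L(g)=0$ hence $\xi=0$, and then $X_\xi(g)=0$ forces $x^L(g)=0$, i.e.\ $x=0$. Hence $\dim\mathsf{D}'(g)=\dim\lie{D}=\dim\lie{g}=\dim G$ at every point, and isotropy in the signature-$(\dim G,\dim G)$ pairing of $\mathsf{P}_G$ upgrades $\mathsf{D}'$ to a Lagrangian subbundle, while constancy of rank together with the smooth spanning sections yields smoothness of the subbundle. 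The only delicate point, and the place where hypothesis $(**)$ is genuinely used, is the interplay $\lie{g}_0\subseteq\lie{g}_0'$ and $\lie{p}_1'\subseteq\lie{p}_1$: without the former $\mathsf{D}'$ would fail to have constant rank, and without the latter the definition \eqref{definition_of_D'} would collapse.
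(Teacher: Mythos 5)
Your argument is correct, and its first half (isotropy of the spanning sections) is exactly the computation in the paper. Where you diverge is in upgrading "isotropic" to "Lagrangian": the paper proves the reverse inclusion $\mathsf D'(g)^\perp\subseteq\mathsf D'(g)$ directly, by taking an arbitrary $(u_g,\gamma_g)\in\mathsf D'(g)^\perp$, observing that pairing against the sections $(x^L,0)$, $x\in\lie g_0'$, forces $\gamma_g\in\mathsf{P_1}'(g)\subseteq\mathsf{P_1}(g)$, and then splitting $u_g=v_g+w_g$ with $(v_g,\gamma_g)\in\mathsf D_G(g)$ and showing $(w_g,\gamma_g)\in\lie D^L(g)^\perp=\lie D^L(g)$. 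You instead count dimensions: the evaluation map $\Psi_g:\lie D\to\mathsf D'(g)$ built from a linear lift $\xi\mapsto X_\xi$ with $X_0=0$ is onto (since the $\mathsf{G_0}$-ambiguity in $X_\xi$ is absorbed by $\lie g_0\times\{0\}\subseteq\lie D$) and injective, so $\mathsf D'(g)$ is isotropic of dimension $\dim G$ in a pairing of signature $(\dim G,\dim G)$, hence Lagrangian. Both routes are valid; yours is slightly more economical and makes the constant-rank and smoothness assertions explicit (the paper only remarks on smoothness before the proposition), while the paper's decomposition argument has the virtue of exhibiting concretely how an arbitrary element of the orthogonal splits into its $\mathsf D_G$- and $\lie D^L$-components, which is the structural picture reused later in Theorem \ref{th_on_D'}. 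One minor imprecision on your side: the absorption of the $\mathsf{G_0}$-ambiguity rests on $\lie g_0\times\{0\}\subseteq\lie D$ (so that $(x+z,\xi)\in\lie D$ for $z\in\lie g_0$), not on moving $x$ inside $\lie g_0'$; this is the same hypothesis, but stated as you have it the sentence reads as if $x$ itself lay in $\lie g_0'$.
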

The construction of the Dirac structure $\mathsf D'$ is inspired by
\cite{DiMe99}.

Note that the codistribution $\mathsf{P_1}'$ induced by 
$\mathsf{D}'$ on $G$ is equal to
$\mathsf{P_1}'={\lie p_1'}^L$ by definition
 and consequently the distribution 
$\mathsf{G_0}'$ induced by 
$\mathsf{D}'$ on $G$ is equal to
$\mathsf{G_0}'$.
We have 
$\mathsf{G_0}\subseteq\mathsf{G_0}'$ and $\mathsf{P_1}'\subseteq \mathsf{P_1}$.

\begin{proof}
Choose $g\in G$ and   
$(x^L(g)+v_g,\xi^L(g))$, $(y^L(g)+w_g,\eta^L(g))\in\mathsf{D}'(g)$, 
i.e., with $(x,\xi),(y,\eta)\in\lie D$ and $(v_g,\xi^L(g)), (w_g,\eta^L(g))\in\mathsf D_G(g)$. We
have \begin{align*}
\langle \left(x^L(g)+v_g,\xi^L(g)\right), \left(y^L(g)+w_g,\eta^L(g)\right)\rangle
=\langle \left(v_g,\xi^L(g)\right), \left(w_g,\eta^L(g)\right)\rangle+\langle (x,\xi),(y,\eta)\rangle=0,
\end{align*} 
where we have used the equalities $\mathsf{D}_G=\mathsf{D}_G^\perp$ and 
$\lie D=\lie D^\perp$. This shows $\mathsf D'(g)\subseteq \mathsf D'(g)^\perp$.

Conversely, let $(u_g,\gamma_g)\in\mathsf P_G(g)$ be an element of 
$\mathsf{D}'(g)^\perp$.
Then we have 
$\gamma_g(x^L(g))=\langle (x^L(g),0),(u_g,\gamma_g)\rangle=0$
for all  $x\in\lie g_0'$, and 
$\gamma_g$ is thus an element of $\mathsf{P_1}'(g)\subseteq\mathsf{P_1}(g)$.
Choose $v_g\in T_gG$ such that
$(v_g,\gamma_g)\in\mathsf D_G(g)$ and  
set $w_g=u_g-v_g$. Then we get for any $(x,\xi)\in\lie D$:
\begin{align*}
\langle (w_g,\gamma_g), (x^L,\xi^L)(g)\rangle
=\gamma_g(x^L(g))+\xi^L(g)(w_g)
=\gamma_g(x^L(g))+\xi^L(g)(w_g)+\xi^L(g)(v_g)+\gamma_g(X_\xi(g))
\end{align*}
where $X_\xi\in\mx(G)$ is such that $(X_\xi,\xi^L)$ is a section of 
$\mathsf{D}_G$ that is defined at $g$. 
We have used the identity $\xi^L(v_g)+\gamma_g(X_\xi(g))=0$
which holds
because $(X_\xi,\xi^L)\in\Gamma(\mathsf D_G)$ and $(v_g,\gamma_g)\in\mathsf{D}_G(g)$.
But this equals
\begin{align*}
\langle (v_g+w_g,\gamma_g),(X_\xi+x^L,\xi^L)(g)\rangle
=\langle (u_g,\gamma_g),(X_\xi+x^L,\xi^L)(g)\rangle=0,
\end{align*}
since $(X_\xi+x^L,\xi^L)$ is by definition a section of $\mathsf{D}'$ and 
$(u_g,\gamma_g)\in\mathsf{D}'(g)^\perp$.
This shows that $(w_g,\gamma_g)\in\lie D^L(g)^\perp=\lie D^L(g)$. Hence, we have shown
$(u_g,\gamma_g)=(w_g+v_g,\gamma_g)\in\mathsf{D}'(g)$.
\end{proof}

\begin{remark}\label{rem} 
If $(Z,\alpha)$ is a section of $\mathsf{D}'$, then 
we have $(Z,\alpha)=(X_\alpha+Y_\alpha,\alpha)$ with $X_\alpha$ and
$Y_\alpha\in\mx(G)$ such that $(X_\alpha,\alpha)\in\Gamma(\mathsf{D}_G)$
and $(Y_\alpha,\alpha)\in\Gamma(\lie D^L)$. Hence, we have
\[(Z(e),\alpha(e))=(Y_\alpha(e),\alpha(e))+(X_\alpha(e),0)\in\lie D+(\lie
g_0\times\{0\})
=\lie D
\]
because $\mathsf{D}_G(e)=\lie g_0\times\lie p_1$ and $\lie
g_0\times\{0\}\subseteq \lie D$. Since $\lie D$ and $\mathsf D'(e)$ are
Lagrangian, this shows  that $\lie D=\mathsf D'(e)$.
\end{remark}

Let now $H$ be a closed subgroup of $G$ with Lie algebra $\lie h$, and denote 
by $q_H:G\to G/H$ the smooth surjective submersion. Let $\lie D_{G/H}\subseteq\lie
g/\lie h \times\left(\lie g/\lie h\right)^*$ be a Dirac subspace, such that
$\lie D\subseteq \lie g\times\lie g^*$ defined
by $\lie D=(T_eq_H)^*\lie D_{G/H}$ satisfies $(**)$.
Recall that property $(*)$ has been defined in Definition \ref{the_property}.
\begin{theorem}\label{th_on_D'}
The following are equivalent for $\lie D_{G/H}$  and $\lie D $ as above
and $\mathsf D'$ as in \eqref{definition_of_D'}.
\begin{enumerate}
\item $\lie{D}_{G/H}$ satisfies property $(*)$
\item $A_h\left(\lie D/(\lie g_0\times\{0\})\right)\subseteq
\lie D/(\lie  g_0\times\{0\})$ 
for all $h\in H$
\item $\mathsf D'$ is invariant under the right action of $H$ on $G$
\item $(G,\mathsf D')$ projects under $q_H$ to a Dirac homogeneous space
$(G/H,\mathsf D_{G/H})$ such that $\mathsf D_{G/H}(eH)=\lie D_{G/H}$.
\end{enumerate}
\end{theorem}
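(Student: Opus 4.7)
My plan is a cyclic chain $(1) \Rightarrow (2) \Rightarrow (3) \Rightarrow (4) \Rightarrow (1)$ built on a translation dictionary between $G/H$-geometry and the linear algebra of $\lie D \subseteq \lie g\times\lie g^*$. For $h \in H$, the identities $q_H\circ R_h = q_H$ and $\sigma_h\circ q_H = q_H\circ L_h$ give $T_h q_H = T_e q_H\circ T_h R_{h^{-1}}$ and identify $T_{eH}\sigma_h$ with $\Ad_h$ on $\lie g/\lie h$; dually $(T_h q_H)^*\bar\xi = \xi^R(h)$ and $(T_{eH}\sigma_h)^*\bar\xi$ corresponds to $\Ad_h^*\xi$, where $\xi = (T_e q_H)^*\bar\xi\in\lie h^\circ$.

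The implication $(4)\Rightarrow(1)$ is immediate: property $(*)$ is exactly the forward-Dirac condition for $\sigma$ at the points $(h,eH)$ with $h\in H$ (where $\sigma_h$ fixes $eH$), read through the dictionary. Conversely, $(1)\Rightarrow(2)$ is obtained by applying property $(*)$ at $h^{-1}$ (permissible since (2) is required for all $h\in H$, and $H$ is a subgroup): the element $w_{h^{-1}}$ with $(w_{h^{-1}},\xi^R(h^{-1}))\in\mathsf D_G(h^{-1})$ can, by the right-invariant analogue of Lemma \ref{too_useful} (Remark \ref{right_inv_ham}), be replaced by $T_{h^{-1}}R_h X_\xi(h^{-1})$ modulo $\lie g_0$, and the compatibility $x\equiv T_{h^{-1}}R_h w_{h^{-1}}+\Ad_{h^{-1}}y\pmod{\lie h}$ from $(*)$ rearranges into the inclusion $A_h([x+\lie g_0,\xi])\in\lie D/(\lie g_0\times\{0\})$.

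For $(2)\Rightarrow(3)$, I compute $R_h^*$ applied to a spanning section $(X_\xi+x^L,\xi^L)$ of $\mathsf D'$ with $(x,\xi)\in\lie D$. Using $R_h^*x^L=(\Ad_h x)^L$, $R_h^*\xi^L=(\Ad_{h^{-1}}^*\xi)^L$, and Lemma \ref{too_useful} together with $L_g\circ R_{h^{-1}}=R_{h^{-1}}\circ L_g$ (which gives $T_{gh}R_{h^{-1}}\circ T_h L_g=T_e L_g\circ T_h R_{h^{-1}}$), one obtains
\[
R_h^*X_\xi\;\in\;X_{\Ad_{h^{-1}}^*\xi}+\bigl(T_h R_{h^{-1}} X_\xi(h)\bigr)^L+\Gamma(\mathsf{G_0}).
\]
Therefore $(R_h^*(X_\xi+x^L),R_h^*\xi^L)\in\Gamma(\mathsf D')$ if and only if some $(y,\Ad_{h^{-1}}^*\xi)\in\lie D$ satisfies $y\equiv \Ad_h x+T_h R_{h^{-1}}X_\xi(h)\pmod{\lie g_0}$, which is exactly the statement that $A_h([x+\lie g_0,\xi])\in\lie D/(\lie g_0\times\{0\})$.

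For $(3)\Rightarrow(4)$, I invoke the regular reduction recalled in Section \ref{reduction}. The right $H$-action is free and proper (as $H$ is closed), its vertical space $\mathcal V=\lie h^L$ lies in $\mathsf{G_0}'$ by Lemma \ref{lemma_on_lieD}, and $\mathsf D'\cap\mathcal K^\perp=\mathsf D'$ has constant rank, so formula \eqref{bard} produces a Dirac structure $\mathsf D_{G/H}:=q_H(\mathsf D')$; Remark \ref{rem} together with $\mathsf D'(e)=\lie D$ identifies $\mathsf D_{G/H}(eH)=\lie D_{G/H}$. To verify that $\sigma$ is forward Dirac at $(g,eH)$, I decompose a lift $v_g=y^L(g)+v'_g$ with $(y,\xi)\in\lie D$ and $(v'_g,\xi^L(g))\in\mathsf D_G(g)$; the pair $(w_g,u_{eH}):=(v'_g,T_e q_H(y))$ then supplies the required lifts, using $q_H\circ L_g=\sigma_g\circ q_H$ together with the defining relation $\lie D=(T_e q_H)^*\lie D_{G/H}$. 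The main obstacle will be the bookkeeping in the translation $(1)\Rightarrow(2)$, where one must carefully interchange the left-invariant $(X_\xi,\xi^L)$ and right-invariant $(Y_\xi,\xi^R)$ parametrizations of sections of $\mathsf D_G$ at $h^{-1}$, reconciling the $\Ad_h^*$ appearing via $(T_{eH}\sigma_h)^*$ in $(*)$ with the $\Ad_{h^{-1}}^*$ in the definition of $A_h$.
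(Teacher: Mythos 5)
Your proposal is correct and follows essentially the same route as the paper: the same cyclic chain $(1)\Rightarrow(2)\Rightarrow(3)\Rightarrow(4)\Rightarrow(1)$, the same use of Lemma \ref{too_useful} to compute $R_h^*X_\xi\in X_{\Ad_{h\inv}^*\xi}+\bigl(T_hR_{h\inv}X_\xi(h)\bigr)^L+\Gamma(\mathsf{G_0})$ in $(2)\Rightarrow(3)$, the same regular reduction with $\mathcal{V}_H=\lie h^L\subseteq\mathsf{G_0}'$ and the same decomposition of a lift into its $\mathsf D_G$- and $\lie D^L$-parts in $(3)\Rightarrow(4)$, and Remark \ref{remark_on_property_*} for $(4)\Rightarrow(1)$. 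The one caveat is in $(1)\Rightarrow(2)$: your intermediate claim that $w_{h\inv}$ may be replaced by $T_{h\inv}R_hX_\xi(h\inv)$ modulo $\lie g_0$ is mistyped (the two sides live in different spaces, and $w_{h\inv}$ is paired with $\xi^R(h\inv)=(\Ad_{h\inv}^*\xi)^L(h\inv)$, not with $\xi^L(h\inv)$); the relation that Remark \ref{right_inv_ham} actually delivers, and which makes your rearrangement $x\equiv T_{h\inv}R_hw_{h\inv}+\Ad_{h\inv}y\pmod{\lie h}$ collapse to $A_h$-invariance, is $\Ad_h\left(T_{h\inv}R_hw_{h\inv}\right)\in-T_hR_{h\inv}X_\xi(h)+\lie g_0$ — the paper sidesteps this by writing the covector at $h\inv$ as $(\Ad_{h\inv}^*\xi)^L(h\inv)$ and applying Lemma \ref{too_useful} directly.
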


\begin{proof}
Assume first that $\lie D_{G/H}$ satisfies $(*)$ and choose $(x+\lie
g_0,\xi)\in\lie D/(\lie g_0\times\{0\})$. We have then $(x,\xi)\in\lie D$ and
hence there exists $\bar\xi\in\left(\lie g/\lie h\right)^*$ such that 
$(T_eq_H)^*\bar \xi=\xi$ and $(T_eq_H x,\bar\xi)\in\lie D_{G/H}$. By $(*)$ for
$(T_eq_H x,\bar\xi)\in\lie D_{G/H}$
and $h\inv\in H$, there
exists  $(w_{h\inv},\beta_{h\inv})\in\mathsf D_G(h\inv)$ and $(\bar
y,\bar\eta)\in\lie D_{G/H}$
such that $\beta_{h\inv}=(T_{h\inv}q_H)^*\bar\xi$,
$\bar\eta=(T_{eH}\sigma_{h\inv})^*\bar\xi$ and $T_eq_H x=T_{eH}\sigma_{h\inv}\bar y+T_{h\inv}q_Hw_{h\inv}$.
We compute
\begin{align*}
&(T_eL_{h\inv})^*\beta_{h\inv}=(T_{h\inv}q_H\circ T_eL_{h\inv})^*\bar\xi
=(T_eq_H\circ T_{h\inv}R_h\circ T_eL_{h\inv})^*\bar\xi
=\Ad_{h\inv}^*\xi\\
\text{ and also }\quad &
\Ad_{h\inv}^*\xi=(T_eL_{h\inv})^*\beta_{h\inv}=(T_{h\inv}q_H\circ
T_eL_{h\inv})^*\bar\xi
=(T_{eH}\sigma_{h\inv}\circ T_eq_H)^*\bar\xi=(T_eq_H)^*\bar\eta=:\eta.
\end{align*}
This yields also $\eta\in\lie p_1$, and
there exists a vector field $X_\eta\in\mx(G)$ such that
$(X_\eta,\eta^L)\in\Gamma(\mathsf D_G)$ and $X_\eta(h\inv)=w_{h\inv}$.
We have 
\[X_\eta(e)=X_{\Ad_{h\inv}^*\xi}(hh\inv)
\overset{\eqref{eq4}}=T_{h\inv}L_hX_{\Ad_{h\inv}^*\xi}(h\inv)+T_hR_{h\inv}X_\xi(h)+z
\]
with $z\in\lie g_0$.
We get
\begin{align*}
\bar y&=T_{eH}\sigma_hT_eq_H x-T_{eH}\sigma_hT_{h\inv}q_Hw_{h\inv}
=T_hq_HT_eL_hx-T_eq_H T_{h\inv}L_hX_{\eta}(h\inv)\\
&=T_eq_H\left(T_hR_{h\inv}T_eL_hx+T_hR_{h\inv}X_\xi(h)-X_{\Ad_{h\inv}^*\xi}(e)+z\right).
\end{align*}
Since $(\bar y,\bar\eta)$ is 
an element of $\lie D_{G/H}$, we have $(y,\eta)\in\lie D$ for any $y\in\lie g$ 
such that $T_eq_Hy=\bar y$.
Hence, the pair $(\Ad_hx+T_hR_{h\inv}X_\xi(h)-X_{\Ad_{h\inv}^*\xi}(e)+z, \eta)
=(\Ad_hx+T_hR_{h\inv}X_\xi(h)-X_{\Ad_{h\inv}^*\xi}(e)+z, \Ad_{h\inv}^*\xi)$ is
an element of $\lie D$. With $X_{\Ad_{h\inv}^*\xi}(e)+z\in\lie g_0$, this shows 
that $A(h,(x+\lie g_0,\xi))=(\Ad_hx+T_hR_{h\inv}X_\xi(h)+\lie g_0,
\Ad_{h\inv}^*\xi)
\in\lie D/(\lie g_0\times\{0\})$.

\medskip

Assume next that $\lie D/(\lie g_0\times\{0\})$ is $H$-invariant and choose a
spanning section $(X_\xi+x^L,\xi^L)$ of $\mathsf D'$, hence with
$(x,\xi)\in\lie D$ and $X_\xi\in\mx(G)$ a vector field satisfying
$(X_\xi,\xi^L)\in\Gamma(\mathsf D_G)$.
Since $(x+\lie g_0,\xi)$ is an element of $\lie D/(\lie g_0\times\{0\})$ we
get for an arbitrary $h\in H$
\[A_{h}(x+\lie g_0,\xi)
=\left(\Ad_hx+T_hR_{h\inv}X_\xi(h)+\lie g_0,\Ad_{h\inv}^*\xi\right)\in
\lie D/(\lie g_0\times\{0\}),
\]
and hence 
\begin{equation}\label{eq18}
\left(\Ad_hx+T_hR_{h\inv}X_\xi(h),\Ad_{h\inv}^*\xi\right)\in\lie D.
\end{equation}
Then we can compute for $g\in G$:
\begin{align*}
&\left(R_h^*(X_\xi+x^L)(g),R_h^*(\xi^L)(g)\right)
=\left(T_{gh}R_{h\inv}T_eL_{gh}x+T_{gh}R_{h\inv}X_\xi(gh),
\xi\circ T_{gh}L_{h\inv g\inv}\circ T_gR_h\right)\\
&\hspace*{2cm}\overset{\eqref{eq4}}=\left(T_eL_g\Ad_hx+T_{gh}R_{h\inv}(T_gR_hX_{\Ad_{h\inv}^*\xi}(g)
+T_hL_gX_\xi(h)+T_gR_{h}T_eL_gz),
(\Ad_{h\inv}^*\xi)^L(g)
\right)
\end{align*}
for some $z\in\lie g_0$ by Lemma \ref{too_useful}.
Thus, we get
\begin{align*}
\left(R_h^*(X_\xi+x^L)(g),R_h^*(\xi^L)(g)\right)
=&\left((\Ad_hx)^L(g)+X_{\Ad_{h\inv}^*\xi}(g)
+\left(T_hR_{h\inv}X_\xi(h)\right)^L(g)+z^L(g),
(\Ad_{h\inv}^*\xi)^L(g)
\right)\\
=&\left(\left(\Ad_hx+T_hR_{h\inv}X_\xi(h)\right)^L(g)+X_{\Ad_{h\inv}^*\xi}(g),
(\Ad_{h\inv}^*\xi)^L(g)
\right)+(z^L,0)(g).
\end{align*}
By the definition  of $\mathsf{D}'$ and \eqref{eq18}, we get consequently that
$\left(R_h^*(X_\xi+x^L)(g),R_h^*(\xi^L)(g)\right)\in\mathsf{D}'(g)$
(note that $z^L$ is a section of $\mathsf{G_0}\subseteq \mathsf{G_0'}$),
and hence that the right-action of $H$ on $(G,\mathsf{D}')$ is 
canonical.

\medskip

Assume that the right action of $H$ on $(G,\mathsf D')$ is canonical.
The vertical space $\V_H$ of the right action of $H$ on
$G$ is $\V_H=\lie h^L\subseteq \mathsf{G_0}'$ since by definition of $\lie D$
and $\lie g_0'$, we have $\lie h\subseteq \lie g_0'$. Thus, we have
$\mathsf{P_1}'\subseteq {\V_H}^\circ$ and hence, $\mathsf{D}'\cap
\K_H^\perp=\mathsf{D'}$ (recall the notations for \eqref{bucagured}). 
The reduced Dirac structure $\mathsf D_{G/H}$ is then given
by \[\mathsf D_{G/H}:=q_H(\mathsf D')=\left.\left(\frac{\mathsf{D}'+\K_H}{\K_H}\right)\right/H=
\left.\frac{\mathsf{D}'}{\K_H}\right/H.\]
We have to show that this defines a Dirac homogeneous space of $(G,\mathsf
D_G)$.
Note first that if $(\bar x,\bar\xi)\in\mathsf D_{G/H}(eH)$, then there exists 
$(x,\xi)\in\mathsf D'(e)=\lie D$ (see Remark \ref{rem})  such that $T_eq_H x=\bar x$ and
$(T_eq_H)^*\bar\xi=\xi$. But then $(\bar x,\bar \xi)$ is an element of $\lie
D_{G/H}$. The other inclusion can be shown in the same manner and we get
$\mathsf D_{G/H}(eH)=\lie D_{G/H}$.
Choose then $gH\in G/H$ and $(\bar v,\bar\alpha)\in \mathsf{D}_{G/H}(gH)$, that is,
$(\bar v,\bar\alpha)\in T_{gH}(G/H)\times
T_{gH}(G/H)^*$ such that there exists $v\in T_gG$ with $T_gq_H v=\bar
v$ and $(v,(T_gq_H)^*\bar\alpha)\in\mathsf{D}'(g)$. Then we can write $v$ as a sum
$v=w+u$ with $w,u\in T_gG$ such that $(w, (T_gq_H)^*\bar\alpha)\in\mathsf{D}_G(g)$ 
and $(u,(T_gq_H)^*\bar\alpha)\in\lie D^L(g)$, i.e.,
$(T_gL_{g\inv}u,(T_eL_g)^*\circ(T_gq_H)^*\bar\alpha)\in\lie D$. 
Since
$(T_eL_g)^*\circ(T_gq_H)^*\bar\alpha=(T_eq_H)^*(T_{eH}\sigma_g)^*\bar\alpha$,
we get
$(T_eq_H
T_gL_{g\inv}u,(T_{eH}\sigma_g)^*\bar\alpha)\in\lie {D}_{G/H}=\mathsf
D_{G/H}(eH)$. 
Set $\bar u:=T_eq_H
T_gL_{g\inv}u$, then we have $T_{eH}\sigma_g\bar u=T_gq_Hu$  and hence 
\[\bar v=T_gq_H w+ T_gq_H u=T_gq_H w+T_{eH}\sigma_g\bar u.\]

\medskip

The proof of the last implication $4\Rightarrow 1$ is given by Remark \ref{remark_on_property_*}.
\end{proof}

We have immediately the following corollary, which, together with the
preceding theorem, classifies the Dirac structures on $G/H$ that make
$(G/H,\mathsf D_{G/H})$ a Dirac homogeneous space of $(G,\mathsf D_G)$. 

\begin{corollary}
Let $(G,\mathsf D_G)$ be a Dirac Lie group, $H$ a closed Lie subgroup of $G$ and
$(G/H,\mathsf D_{G/H})$ a Dirac homogeneous space of $(G,\mathsf D_G)$. The
Dirac  structure $\mathsf D_{G/H}$ on $G/H$ is then uniquely determined by $\mathsf
D_{G/H}(eH)$ and $(G,\mathsf D_G)$. 
\end{corollary}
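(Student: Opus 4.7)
The plan is to apply the construction of Subsection \ref{subsectionD'} to the Dirac homogeneous space itself and check that it reproduces $\mathsf D_{G/H}$. Set $\lie D_{G/H}:=\mathsf D_{G/H}(eH)$; by Remark \ref{remark_on_property_*} it enjoys property $(*)$, and its pullback $\lie D:=(T_eq_H)^*\lie D_{G/H}$ satisfies $(**)$ by Lemma \ref{lemma_on_lieD}. Formula \eqref{definition_of_D'} then produces a Dirac structure $\mathsf D'$ on $G$ that is manifestly constructed from $\mathsf D_G$ and $\lie D$ alone, and Theorem \ref{th_on_D'} asserts that $q_H(\mathsf D')$ is a Dirac homogeneous structure on $G/H$ whose value at $eH$ equals $\lie D_{G/H}=\mathsf D_{G/H}(eH)$.

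It remains to establish the equality $\mathsf D_{G/H}=q_H(\mathsf D')$; granting this, the recipe $\mathsf D_{G/H}=q_H(\mathsf D')$ exhibits $\mathsf D_{G/H}$ as a function of $\mathsf D_G$ and $\mathsf D_{G/H}(eH)$ and the corollary follows. Both sides are Lagrangian subbundles of $\mathsf P_{G/H}$, hence have the same fiber dimension $\dim(G/H)$, so it suffices to prove the inclusion $\mathsf D_{G/H}\subseteq q_H(\mathsf D')$. Fix $(v_{gH},\alpha_{gH})\in\mathsf D_{G/H}(gH)$ and invoke Remark \ref{remark_on_property_*} to obtain $(w_g,(T_gq_H)^*\alpha_{gH})\in\mathsf D_G(g)$ together with $(u_{eH},(T_{eH}\sigma_g)^*\alpha_{gH})\in\lie D_{G/H}$ satisfying $v_{gH}=T_gq_H w_g+T_{eH}\sigma_g u_{eH}$.

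Lift $u_{eH}$ to $u\in\lie g$ and define $\xi:=(T_eq_H)^*(T_{eH}\sigma_g)^*\alpha_{gH}\in\lie g^*$, so that $(u,\xi)\in\lie D$. Differentiating the commuting square $q_H\circ L_g=\sigma_g\circ q_H$ at the identity and dualising yields $(T_eL_g)^*(T_gq_H)^*\alpha_{gH}=\xi$, which is equivalent to $(T_gq_H)^*\alpha_{gH}=\xi^L(g)$. Consequently $(u^L(g)+w_g,\xi^L(g))$ belongs to $\mathsf D'(g)$ by the very definition \eqref{definition_of_D'} of $\mathsf D'$, and $T_gq_H(u^L(g)+w_g)=T_{eH}\sigma_g u_{eH}+T_gq_H w_g=v_{gH}$, so $(v_{gH},\alpha_{gH})\in q_H(\mathsf D')(gH)$ as required. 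The only even mildly delicate step is the identity $(T_gq_H)^*\alpha_{gH}=\xi^L(g)$, but this is merely chain-rule bookkeeping on the square $q_H\circ L_g=\sigma_g\circ q_H$; all the substantive content lies in Theorem \ref{th_on_D'} and Remark \ref{remark_on_property_*}.
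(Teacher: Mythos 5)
Your proposal is correct and follows essentially the same route as the paper: form $\lie D=(T_eq_H)^*\mathsf D_{G/H}(eH)$, build $\mathsf D'$ via \eqref{definition_of_D'}, and invoke Theorem \ref{th_on_D'} to conclude $\mathsf D_{G/H}=q_H(\mathsf D')$. The paper dismisses the final equality with ``it is easy to check''; your dimension-count plus the explicit verification of the inclusion $\mathsf D_{G/H}\subseteq q_H(\mathsf D')$ (including the identity $(T_gq_H)^*\alpha_{gH}=\xi^L(g)$ from $q_H\circ L_g=\sigma_g\circ q_H$) supplies that check correctly.
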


\begin{proof}
Since $(G/H,\mathsf D_{G/H})$ is a Dirac homogeneous space of $(G,\mathsf D_G)$,
the subspace $\mathsf D_{G/H}(eH)$ satisfies $(*)$ by Remark
\ref{remark_on_property_*}, and  $\lie D=T_eq_H^*\mathsf D_{G/H}(eH)$ satisfies
$(**)$ by Lemma \ref{lemma_on_lieD}. Define
$\mathsf D'$ as above.
Then, by the preceding theorem, we get that $\mathsf D'$ is right
$H$-invariant and projects under $q_H$ to a Dirac structure $q_H(\mathsf
D')$. It is easy to check that $ q_H(\mathsf
D')=\mathsf D_{G/H}$.
\end{proof}

\begin{remark}
\begin{enumerate}
\item 
Since the vertical space of the right action of $H$ on $(G,\mathsf D')$
denoted here by $\mathcal V_H$ is equal to $\lie h^L$ and hence contained in 
$\mathsf{G_0}'$, the Dirac structure $\mathsf D'$ is the backward Dirac image
of $\mathsf D_{G/H}$ defined on $G$ by $q_H$ 
(see subsection \ref{subsection_dirac_structures}).
\item The quotient $\lie D/(\lie g_0\times\{0\})$ is easily shown to be a
  Lagrangian subspace  of $\lie g/\lie g_0\times\lie p_1$ if and only if
  $\lie D_{G/H}$ is a Lagrangian subspace of $\lie g/\lie h\times\left(\lie
    g/\lie h\right)^*$ satisfying $(**)$.
\end{enumerate}
\end{remark}

\begin{corollary}\label{closed}
The Dirac homogeneous space $(G/H,\mathsf{D}_{G/H})$ is integrable if and only if  
the smooth Dirac manifold $(G,\mathsf{D}')$ defined by
$\mathsf{D}_{G/H}(eH)$ as in \eqref{definition_of_D'} is integrable.
\end{corollary}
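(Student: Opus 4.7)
The plan is to exploit the identifications $\mathsf D_{G/H}=q_H(\mathsf D')$ and $\mathsf D'=q_H^*\mathsf D_{G/H}$ afforded by Theorem~\ref{th_on_D'} and the remark preceding Corollary~\ref{closed}, both of which rely on the inclusion $\mathcal V_H=\lie h^L\subseteq\mathsf{G_0}'$ (coming from $\lie h\subseteq\lie g_0'$, Lemma~\ref{lemma_on_lieD}). Consequently $\mathsf D'$ is generated locally as a $\smoo$-module by two classes of sections: projectable lifts $(X,q_H^*\bar\alpha)$ with $X\sim_{q_H}\bar X$ for some $(\bar X,\bar\alpha)\in\Gamma(\mathsf D_{G/H})$, and purely vertical sections $(V,0)$ with $V\in\Gamma(\mathcal V_H)$. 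The bridge between the two Courant--Dorfman brackets is the set of standard naturality formulas for projectable fields $X_i\sim_{q_H}\bar X_i$: $[X_1,X_2]\sim_{q_H}[\bar X_1,\bar X_2]$, $\ldr{X_1}(q_H^*\bar\alpha)=q_H^*\ldr{\bar X_1}\bar\alpha$, and $\ip{X_2}\dr(q_H^*\bar\alpha)=q_H^*(\ip{\bar X_2}\dr\bar\alpha)$; hence the bracket of two projectable lifts is itself a projectable lift of the bracket of the corresponding sections of $\mathsf D_{G/H}$.

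For the forward implication, assuming $\mathsf D'$ is integrable, I will lift arbitrary $(\bar X_i,\bar\alpha_i)\in\Gamma(\mathsf D_{G/H})$ to projectable sections of $\mathsf D'$; their bracket lies in $\Gamma(\mathsf D')$ by integrability, is projectable by the formulas above, and hence projects via \eqref{bard} to $[(\bar X_1,\bar\alpha_1),(\bar X_2,\bar\alpha_2)]\in\Gamma(\mathsf D_{G/H})$.

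For the converse, the Leibniz rule \eqref{leibnitz} together with the skew-symmetry of the Courant--Dorfman bracket on a Lagrangian subbundle reduces the problem to checking closedness on the two families of generators. Two projectable generators bracket, via the formulas above and the integrability of $\mathsf D_{G/H}$, to a projectable lift of a section of $\mathsf D_{G/H}$, hence to a section of $\mathsf D'$. The mixed bracket of a vertical $(V,0)$ with a projectable $(X,q_H^*\bar\alpha)$ simplifies because $\ip{V}(q_H^*\bar\eta)=0$ for every $\bar\eta\in\Omega^1(G/H)$, so $\ldr{V}(q_H^*\bar\alpha)=0$, while $[V,X]$ is vertical (as $X$ is projectable); the pair $([V,X],0)$ therefore lies in $\Gamma(\mathcal V_H\operp\{0\})\subseteq\Gamma(\mathsf{G_0}'\operp\{0\})\subseteq\Gamma(\mathsf D')$. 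Two vertical generators bracket to $([V_1,V_2],0)$, which remains vertical by the involutivity of $\mathcal V_H=\lie h^L$.

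I expect the main obstacle to be bookkeeping rather than conceptual: carefully justifying that the projectable and vertical sections really do generate $\mathsf D'$ (using that any two $q_H$-lifts of the same $\bar X$ differ by a vertical vector field), and verifying the naturality formulas for the Lie derivative and interior product on pulled-back forms under projectable vector fields.
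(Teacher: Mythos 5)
Your argument is correct, and it reaches the same two pivots as the paper's proof --- the inclusion $\V_H=\lie h^L\subseteq\mathsf{G_0}'$ and the identifications $\mathsf D_{G/H}=q_H(\mathsf D')$, $\mathsf D'=q_H^*\mathsf D_{G/H}$ --- but it gets there by a more elementary route. For the implication ``$\mathsf D'$ integrable $\Rightarrow\mathsf D_{G/H}$ integrable'' the paper simply cites the regular Dirac reduction theorem of \cite{JoRa08}, whereas you reprove the relevant special case directly by bracketing projectable lifts; that is a fair trade of generality for self-containedness. The real divergence is in the converse: the paper invokes the invariant-frames result of \cite{JoRaZa11} to produce a spanning family of genuinely $H$-\emph{descending} sections $(X,\alpha)$ (with $[X,\Gamma(\V_H)]\subseteq\Gamma(\V_H)$ and $\alpha\in\Gamma(\V_H^\circ)^H$), so that only one type of bracket needs to be computed, and the vertical discrepancy $[X,Y]-Z\in\Gamma(\lie h^L)$ is absorbed at the end using $\lie h^L\subseteq\mathsf{G_0}'$. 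You instead span $\mathsf D'$ locally by arbitrary projectable lifts $(X,q_H^*\bar\alpha)$ of sections of $\mathsf D_{G/H}$ together with vertical generators $(V,0)$, $V\in\Gamma(\V_H)$ --- a decomposition that exists for any surjective submersion without any invariance statement --- at the cost of two additional but easy bracket computations (mixed and vertical--vertical), both of which land in $\Gamma(\V_H\operp\{0\})\subseteq\Gamma(\mathsf{G_0}'\operp\{0\})\subseteq\Gamma(\mathsf D')$ exactly as you say. The only points to write out carefully are the ones you already flag: that the projectable lifts really are sections of $\mathsf D'$ (this is precisely the backward-image description $\mathsf D'=q_H^*\mathsf D_{G/H}$, valid because $\V_H\subseteq\mathsf{G_0}'$), that these generators span each fibre $\mathsf D'(g)$ (a dimension count plus the fact that two $q_H$-lifts of $\bar X$ differ by a vertical field), and that reducing to generators is legitimate because $T_{\mathsf D'}$ is $\smoo$-trilinear on sections of a Lagrangian subbundle, via the Leibniz rule \eqref{leibnitz}. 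In the mixed bracket, note that $\ldr{V}(q_H^*\bar\alpha)=\ip{V}\dr(q_H^*\bar\alpha)+\dr\,\ip{V}(q_H^*\bar\alpha)=\ip{V}(q_H^*\dr\bar\alpha)=0$ uses that contraction with a vertical field kills pulled-back forms of \emph{every} degree, not just one-forms. With those details filled in, your proof is complete.
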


\begin{proof}It is known by the theory of Dirac reduction that if an integrable
  Dirac manifold $(M,\mathsf D)$ is acted upon in a free and proper canonical way
  by a Lie group $H$, then the quotient Dirac manifold $(M/H,q_H(\mathsf D))$
  is also integrable. Hence, if $(G,\mathsf D')$ is integrable, then $(G/H,\mathsf
  D_{G/H})$ is also integrable (see for instance \cite{JoRa08}).

For the converse implication, we deduce from the proof of Theorem \ref{th_on_D'}  that
$(\ldr{\xi^L}X,\ldr{\xi^L}\alpha)$ is an element  of $\Gamma(\mathsf{D}')$ for
all sections $(X,\alpha)$ of $\mathsf{D}'$ and Lie algebra elements
$\xi\in\lie h$.
This yields that $\mathsf{D'}=\mathsf{D'}\cap\K_H^\perp$ satisfies 
\[[\Gamma(\K_H),\Gamma(\mathsf{D}')]\subseteq \Gamma(\mathsf{D}'+\K_H).\]
We get from a result  in \cite{JoRaZa11} that $\mathsf{D}'$ is spanned by
\emph{right $H$-descending  sections} $(X,\alpha)\in\Gamma(\mathsf{D}')$, that is, with
$[X,\Gamma(\V_H)]\subseteq \Gamma(\V_H)$ and $\alpha\in\Gamma({\V_H}^\circ)^H$.  Hence, it
suffices to show that if $(X,\alpha)$ and $(Y,\beta)$ are such elements of
$\Gamma(\mathsf{D}')$, then their bracket $[(X,\alpha),(Y,\beta)]$ is a
section of $\mathsf{D'}$.

Since $(X,\alpha)$ and $(Y,\beta)$ are $H$-descending and
$(G/H,\mathsf{D}_{G/H})$ is the Dirac quotient space of $(G,\mathsf{D}')$, 
we find $(\bar X,\bar \alpha)$ and $(\bar Y,\bar
\beta)\in\Gamma(\mathsf{D}_{G/H})$
such that $X\sim_{q_H}\bar X$, $Y\sim_{q_H}\bar Y$, $\alpha=q_H^*\bar\alpha$ and $\beta=q_H^*\bar\beta$.

We have then $[X,Y]\sim_{q_H}[\bar X,\bar Y]$ and
$\ldr{X}\beta-\ip{Y}\dr\alpha=q_H^*(\ldr{\bar X}\bar\beta-\ip{\bar
  Y}\dr\bar\alpha)$. If $(G/H,\mathsf{D}_{G/H})$ is integrable, the pair
$[(\bar X,\bar\alpha),(\bar Y,\bar\beta)]=([\bar X,\bar Y], \ldr{\bar X}\bar\beta-\ip{\bar
  Y}\dr\bar\alpha)$ is a section of $\mathsf{D}_{G/H}$. By construction of the
Dirac quotient of a Dirac manifold by a smooth Dirac action, there exists a
smooth vector field $Z\in\mx(G)$ such that $(Z,q_H^*(\ldr{\bar X}\bar\beta-\ip{\bar
  Y}\dr\bar\alpha))$ is an element of $\Gamma(\mathsf{D}')$ and $Z\sim_{q_H}[\bar
X,\bar Y]$. But then there exists a smooth section $V\in\Gamma(\V_H)=\Gamma(\lie
h^L)$ such that $Z+V=[X,Y]$. Since $\lie h^L\subseteq \mathsf{G_0}'$, this
yields 
\[[(X,\alpha),(Y,\beta)]=([X,Y],
\ldr{X}\beta-\ip{Y}\dr\alpha)=(Z,\ldr{X}\beta-\ip{Y}\dr\alpha)+(V,0)
\in\Gamma(\mathsf{D}')\]
and thus the Dirac manifold $(G,\mathsf{D}')$ is integrable.
\end{proof}

\begin{remark}\label{action_of_J_on_lieD}
\begin{enumerate}
\item If $(G/H,\mathsf D_{G/H})$ is integrable, the Dirac structure $\mathsf D'$ is also 
integrable as we
have seen above and the subbundle $\mathsf{G_0}'={\lie g_0'}^L$ is integrable
in the sense of Frobenius. The vector subspace $\lie g_0'\subseteq \lie g$ is
then a subalgebra and the integral leaf of $\mathsf{G_0}'$ through $e$ is a
Lie  subgroup of $G$, which will be called $J$ in the following. 
As in the proof of Lemma \ref{lemma_on_action_of_G_0}, we can show
  that the right action of $J$ on $G$ is canonical on $(G,\mathsf D')$. 
Theorem \ref{th_on_D'} yields then
$$A_{j}\left(\lie D/(\lie g_0\times\{0\})\right)\subseteq \lie D/(\lie
g_0\times\{0\})$$
for all $j\in J$.
\item  We can also show that if $(G,\mathsf D_G)$ is $N$-invariant, then
  $(G,\mathsf D')$ is $N$-invariant.
By
Theorem \ref{action_of_N},
the bracket on $\lie p_1\times \lie p_1$ defined in Definition
  \ref{defbracket}
has image in $\lie p_1$, and by Remark \ref{action_of_GmodN}, we know then
that the action of $N$ on $\lie g/\lie g_0\times\lie p_1$ is trivial.
Hence, we have
$A_n(\lie D/(\lie g_0\times\{0\}))=\lie D/(\lie g_0\times\{0\})$ for all
$n\in N$, and we can apply Theorem \ref{th_on_D'}.
\end{enumerate}
\end{remark}

\subsection{Integrable Dirac homogeneous spaces}

We consider here an \emph{integrable} Dirac Lie group $(G,\mathsf{D}_G)$, a closed Lie subgroup
$H$ of $G$ (with Lie algebra $\lie h$) and a Dirac homogeneous space 
$(G/H,\mathsf{D}_{G/H})$ of
$(G,\mathsf{D}_G)$.
As above, we consider the backward image $\lie D=(T_eq_H)^*\mathsf{D}_{G/H}(eH)$ of
$\mathsf{D}_{G/H}(eH)$ under $T_eq_H$, i.e.,
\[\lie D=\big\{(x,(T_eq)^*\bar\xi)\mid  x\in\lie g, \bar
\xi\in(\lie g/\lie h)^*\text{ such that } 
(T_eq x,\bar\xi)\in\mathsf{D}_{G/H}(eH)\big\},
\] and the Dirac structure $\mathsf D'$ defined by $\lie D$ on $G$ as in
\eqref{definition_of_D'} and Proposition \ref{defD'}.

\begin{theorem}
The quotient $\lie D/(\lie g_0\times\{0\})$ is a subalgebra of 
$\lie g/\lie g_0\times\lie
p_1$ if and only if $(G,\mathsf D')$ (or equivalently $(G/H,\mathsf D_{G/H})$)
is integrable.
\end{theorem}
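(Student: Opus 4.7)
The plan is to compare the Courant--Dorfman bracket of two spanning sections of $\mathsf D'$ with the Lie bialgebra bracket on $\lie g/\lie g_0\times\lie p_1$ constructed in Theorem \ref{bracket}. As a $C^\infty(G)$-module, $\mathsf D'$ is spanned by sections of the form $(X_\xi+x^L,\xi^L)$ with $(x,\xi)\in\lie D$ and $X_\xi\in\mx(G)$ satisfying $(X_\xi,\xi^L)\in\Gamma(\mathsf D_G)$. Since $\mathsf{G_0}\subseteq \mathsf{G_0}'$, every section of $\mathsf{G_0}\operp\{0\}$ already lies in $\Gamma(\mathsf D')$, so integrability of $\mathsf D'$ amounts to the Courant bracket of any two spanning sections lying in $\Gamma(\mathsf D')$ modulo $\Gamma(\mathsf{G_0}\operp\{0\})$.

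First, I would compute the one-form component. Using Proposition \ref{lemma} (applicable since $\mathsf D_G$ is integrable, hence $[\xi,\eta]\in\lie p_1$ by Theorem \ref{liealgebra}), together with $\ldr{x^L}\eta^L=(\ad_x^*\eta)^L$ and $\ip{y^L}\dr\xi^L=(\ad_y^*\xi)^L$ (the latter because $\xi^L(y^L)=\xi(y)$ is constant), the one-form part of the Courant bracket equals $\zeta^L$, where $\zeta:=[\xi,\eta]+\ad_x^*\eta-\ad_y^*\xi\in\lie p_1$. For the vector-field component, Remark \ref{bracket_of_Xmu}(2) gives $[X_\xi,X_\eta]\in X_{[\xi,\eta]}+\Gamma(\mathsf{G_0})$, and Lemma \ref{lie_der_of_Xmu} gives $\ldr{x^L}X_\eta\in -(\ad_\eta^*x)^L+X_{\ad_x^*\eta}+\Gamma(\mathsf{G_0})$ and symmetrically for $\ldr{y^L}X_\xi$. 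Summing, the vector-field part lies in $X_\zeta+w^L+\Gamma(\mathsf{G_0})$, where $w:=[x,y]-\ad_\eta^*x+\ad_\xi^*y$ and $X_\zeta:=X_{[\xi,\eta]}+X_{\ad_x^*\eta}-X_{\ad_y^*\xi}$ is an admissible choice of partner for $\zeta^L$.

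By Theorem \ref{bracket}, the pair $(w+\lie g_0,\zeta)$ is precisely the Lie bialgebra bracket $[(x+\lie g_0,\xi),(y+\lie g_0,\eta)]$ in $\lie g/\lie g_0\times\lie p_1$. Absorbing the $\Gamma(\mathsf{G_0})$ error via the inclusion $\lie g_0\subseteq \lie g_0'$, the Courant bracket of the two spanning sections lies in $\Gamma(\mathsf D')$ if and only if there exists $z_0\in\lie g_0$ with $(w+z_0,\zeta)\in\lie D$, i.e.\ if and only if $(w+\lie g_0,\zeta)\in\lie D/(\lie g_0\times\{0\})$. Letting $(x,\xi),(y,\eta)$ range over $\lie D$ gives the equivalence: $\mathsf D'$ is integrable precisely when $\lie D/(\lie g_0\times\{0\})$ is closed under the Lie bialgebra bracket, and the equivalence with integrability of $(G/H,\mathsf D_{G/H})$ is already provided by Corollary \ref{closed}.

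The content of the argument is entirely in Theorems \ref{cocycle}, \ref{liealgebra}, \ref{bracket} and Lemma \ref{lie_der_of_Xmu}; the only mildly delicate point is the bookkeeping of error terms, which however sits cleanly inside $\mathsf{G_0}\subseteq\mathsf{G_0}'$ and matches the indeterminacy of $X_\zeta$ modulo $\Gamma(\mathsf{G_0})$. There is no further obstacle beyond faithfully assembling the two components of the bracket and reading off that the resulting vector corresponds exactly to the Lie bialgebra bracket in $\lie g/\lie g_0\times\lie p_1$.
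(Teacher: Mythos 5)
Your argument is correct and follows essentially the same route as the paper: you compute the Courant bracket of the spanning sections $(X_\xi+x^L,\xi^L)$ using Proposition \ref{lemma}, Lemma \ref{lie_der_of_Xmu} and Remark \ref{bracket_of_Xmu}, identify the result modulo $\Gamma(\mathsf{G_0})$ with $\bigl(X_\zeta+w^L,\zeta^L\bigr)$ where $(w+\lie g_0,\zeta)$ is the Lie bialgebra bracket of Theorem \ref{bracket}, and read off the equivalence from the definition of $\mathsf D'$ (the paper phrases the forward direction by evaluating at $e$ and invoking Remark \ref{rem}, but this is the same computation). The equivalence with integrability of $(G/H,\mathsf D_{G/H})$ via Corollary \ref{closed} is likewise as in the paper.
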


\begin{proof}
Choose  $(x,\xi),(y,\eta)$ in $\lie D$, then
the pairs $(X_\xi+x^L,\xi^L)$ and 
 $(X_\eta+y^L,\eta^L)$ are sections of $\mathsf{D}'$.
We have by  Proposition \ref{lemma},
Definition \ref{defbracket},  Proposition \ref{lie_der_of_Xmu}
and Remark \ref{bracket_of_Xmu}:
\begin{align}
\left[(X_\xi+x^L,\xi^L),(X_\eta+y^L,\eta^L)\right]
=&\left([X_\xi,X_\eta]+\ldr{x^L}X_\eta-\ldr{y^L}X_\xi+[x,y]^L,
\ldr{X_\xi}\eta^L-\ip{X_\eta}\dr\xi^L+\ldr{x^L}\eta^L-\ip{y^L}\dr\xi^L
\right)\nonumber\\
\overset{\eqref{left_inv_der_of_Xmu}}= &\left(X_{[\xi,\eta]}+X_{\ad_x^*\eta}-(\ad^*_\eta
  x)^L-X_{\ad_y^*\xi}+(\ad_\xi^*y)^L+[x,y]^L,
([\xi,\eta]+\ad_x^*\eta-\ad^*_y\xi)^L\right)\nonumber\\
&+(Z,0) \quad \text{ for some }Z\in\Gamma(\mathsf{G_0})\nonumber\\
=&\left(X_{[\xi,\eta]+\ad_x^*\eta-\ad_y^*\xi}+([x,y]-\ad^*_\eta
  x+\ad_\xi^*y)^L,([\xi,\eta]+\ad_x^*\eta-\ad^*_y\xi)^L\right),\label{eq1000}
\end{align}
where we have chosen the 
vector field $X_{[\xi,\eta]+\ad_x^*\eta-\ad_y^*\xi}:=X_{[\xi,\eta]}-X_{\ad_y^*\xi}-X_{\ad_y^*\xi}+Z$.

If $(G,\mathsf D')$ is integrable,
we have 
$\left[(X_\xi+x^L,\xi^L),(X_\eta+y^L,\eta^L)\right]\in\Gamma(\mathsf{D}')$,
and hence its value at the neutral element $e$ is an element of $\lie D$
by Remark \ref{rem}.
But since $X_{[\xi,\eta]+\ad_x^*\eta-\ad_y^*\xi}(e)$ is an element of $\lie
g_0$, \eqref{eq1000} yields 
\begin{align*}
\left[(X_\xi+x^L,\xi^L),(X_\eta+y^L,\eta^L)\right](e)
\in\left([x,y]-\ad_\eta^*x+\ad_\xi^*y,[\xi,\eta]+\ad_x^*\eta-\ad_y^*\xi\right)
+(\lie g_0\times\{0\}).
\end{align*}
This leads to
\begin{align*}
[(x+\lie g_0,\xi),(y+\lie g_0,\eta)]
=([x,y]-\ad_\eta^*x+\ad_\xi^*y+\lie g_0,
[\xi,\eta]+\ad_{x}^*\eta-\ad_{y}^*\xi)\in\lie D/(\lie
g_0\times \{0\}).
\end{align*}

For the converse implication, it is sufficient to show that for all
$(x,\xi), (y,\eta)\in\lie D$, we have
\[\left[(X_\xi+x^L,\xi^L),(X_\eta+y^L,\eta^L)\right]\in\Gamma(\mathsf{D}')
\]
since $\mathsf D'$ is spanned by these sections.
By hypothesis, we have
\begin{equation}\label{lieD} [(x+\lie g_0,\xi), (y+\lie g_0,\eta)]
=([x,y]-\ad_\eta^*x+\ad_\xi^*y+\lie g_0, [\xi,\eta]+\ad_x^*\eta-\ad_y^*\xi)\in
\lie D/(\lie g_0\times\{0\})\end{equation}
for all $(x,\xi),(y,\eta)\in \lie D$ and the claim follows using \eqref{eq1000}. 
\end{proof}

We have proved the following theorem which is a generalization of the theorem
in \cite{Drinfeld93}.
\begin{theorem}\label{drinfeld}
Let $(G,\mathsf{D}_G)$ be a Dirac Lie group and 
 $H$ a closed subgroup of $G$ with Lie algebra $\lie h$. The assignment 
\[\mathsf{D}_{G/H}\mapsto \lie D=(T_eq_H)^*\mathsf{D}_{G/H}(eH)
\] 
gives a one-to-one correspondence between $(G,\mathsf{D}_G)$-Dirac homogeneous
structures on $G/H$ and Dirac subspaces  $\lie D\subseteq \lie g\times\lie
g^*$ such that
\begin{enumerate}
\item $(\lie g_0+\lie h)\times\{0\}\subseteq \lie D\subseteq \lie g\times(\lie
p_1\cap\lie h^\circ)$,
\item $\lie D/(\lie g_0\times\{0\})$ is Lagrangian in $\lie
  g/\lie g_0\times\lie p_1$, and 
\item $A_{h}\left(\lie D/(\lie g_0\times\{0\})\right)\subseteq 
\lie D/(\lie g_0\times\{0\})$ for all $h\in H$.
\end{enumerate}
If the Dirac Lie group is integrable, then $(G/H,\mathsf{D}_{G/H})$ is integrable if and
only if
$\lie D/(\lie g_0\times\{0\})$ is a \emph{subalgebra} of $\lie
  g/\lie g_0\times\lie p_1$.
\end{theorem}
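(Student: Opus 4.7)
The plan is to assemble the results of the preceding subsection into a bijection, since almost all the work has already been done. The correspondence in question consists of the forward map $\mathsf D_{G/H}\mapsto \lie D := (T_eq_H)^*\mathsf D_{G/H}(eH)$ and an inverse map that produces a homogeneous structure on $G/H$ from a Dirac subspace satisfying the three listed conditions. I would first check that the forward map lands where claimed, then use Theorem \ref{th_on_D'} and the construction of Subsection \ref{subsectionD'} to build the inverse, and finally verify bijectivity and handle integrability.

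For the forward direction, starting from a Dirac homogeneous space $(G/H,\mathsf D_{G/H})$, I would verify condition 1 as follows. The inclusion $\lie g_0\times\{0\}\subseteq\lie D$ follows from Lemma \ref{lemma_on_lieD}. The inclusion $\lie h\times\{0\}\subseteq\lie D$ is immediate: for $h\in\lie h$ we have $T_eq_H h=0$, so $(h,0)$ pulls back the Lagrangian element $(0,0)\in\mathsf D_{G/H}(eH)$. The upper bound $\lie D\subseteq \lie g\times(\lie p_1\cap\lie h^\circ)$ again follows from Lemma \ref{lemma_on_lieD} combined with the identity $\xi(h)=\bar\xi(T_eq_Hh)=0$ for $\xi=(T_eq_H)^*\bar\xi$ and $h\in\lie h$. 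Condition 2 (Lagrangianity of the quotient $\lie D/(\lie g_0\times\{0\})$ in $\lie g/\lie g_0\times\lie p_1$) is exactly the observation recorded in the second part of the remark following Corollary \ref{closed}, together with the Lagrangianity of $\mathsf D_{G/H}(eH)$. Condition 3 is the implication $1\Rightarrow 2$ of Theorem \ref{th_on_D'}, because $\mathsf D_{G/H}(eH)$ satisfies property $(*)$ by Remark \ref{remark_on_property_*}.

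For the inverse direction, given $\lie D$ satisfying conditions 1--3, the bounds in condition 1 guarantee that $\lie D$ descends to a well-defined subspace $\lie D_{G/H}\subseteq \lie g/\lie h\times(\lie g/\lie h)^*$ via $(x,(T_eq_H)^*\bar\xi)\mapsto (T_eq_H x,\bar\xi)$, and that $\lie D=(T_eq_H)^*\lie D_{G/H}$. Condition 2 translates into $\lie D_{G/H}$ being Lagrangian (again by the remark after Corollary \ref{closed}), and condition 3 is precisely the $H$-invariance hypothesis in statement 2 of Theorem \ref{th_on_D'}. I would then form the Dirac structure $\mathsf D'$ on $G$ as in \eqref{definition_of_D'}; Proposition \ref{defD'} makes it a Dirac structure, and the implication $2\Rightarrow 4$ of Theorem \ref{th_on_D'} produces the desired Dirac homogeneous space $(G/H,\mathsf D_{G/H})$ with $\mathsf D_{G/H}(eH)=\lie D_{G/H}$. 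The two assignments are mutually inverse: starting from $\mathsf D_{G/H}$, the subspace $\lie D_{G/H}$ built from $\lie D=(T_eq_H)^*\mathsf D_{G/H}(eH)$ equals $\mathsf D_{G/H}(eH)$ by construction, and the corollary after Theorem \ref{th_on_D'} shows that $\mathsf D_{G/H}$ is then uniquely recovered from $\mathsf D_{G/H}(eH)$ and $(G,\mathsf D_G)$.

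For the integrability statement, under the hypothesis that $(G,\mathsf D_G)$ is integrable, Corollary \ref{closed} reduces integrability of $(G/H,\mathsf D_{G/H})$ to that of $(G,\mathsf D')$, and the theorem immediately preceding the main one characterises this in turn as the condition that $\lie D/(\lie g_0\times\{0\})$ be a subalgebra of $\lie g/\lie g_0\times\lie p_1$. The main potential obstacle is bookkeeping: one has to be careful that the Dirac subspace $\lie D$ really determines and is determined by the quotient Dirac subspace $\lie D_{G/H}$ in $\lie g/\lie h\times(\lie g/\lie h)^*$, which requires condition 1 in both directions, and that Lagrangianity is preserved by the passage between $\lie D$, $\lie D/(\lie g_0\times\{0\})$, and $\lie D_{G/H}$; this is why the three conditions in the statement appear in exactly the form they do, matching the hypotheses of Theorem \ref{th_on_D'}.
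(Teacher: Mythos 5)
Your proposal is correct and follows essentially the same route as the paper, which gives no separate argument for Theorem \ref{drinfeld} but states it as the summary of Lemma \ref{lemma_on_lieD}, Theorem \ref{th_on_D'} and its corollary and remark, Corollary \ref{closed}, and the integrability theorem immediately preceding it. Your assembly of these ingredients, including the bookkeeping between $\lie D$, $\lie D/(\lie g_0\times\{0\})$ and $\lie D_{G/H}$, matches the paper's intended proof.
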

\begin{example} \label{example}
\begin{enumerate}
\item
Let $(G,\mathsf D_G)$ be a Dirac Lie group and $(G/H,\mathsf
D_{G/H})$ a Dirac homogeneous space of $(G,\mathsf D_G)$
corresponding by Theorem \ref{drinfeld} to 
the Lagrangian subspace $\lie D\subseteq \lie g\times\lie
g^*$. Then, again by
Theorem \ref{drinfeld} applied to the Lie subgroup $\{e\}$ of $G$  and the Dirac
subspace $\lie D\subseteq \lie g\times\lie
g^*$, we get that $(G,\mathsf D')$ is a Dirac homogeneous space of $(G,\mathsf
D_G)$.
If $(G,\mathsf D_G)$ is integrable, we recover the fact that $(G,\mathsf D')$ 
is integrable if and only if $\lie D/(\lie
g_0\times\{0\})$ is a \emph{subalgebra} of $\lie g/\lie g_0\times\lie p_1$,
that is, if and only if $(G/H,\mathsf D_{G/H})$ is integrable.
\item Choose a Dirac Lie group $(G,\mathsf D_G)$ and assume that
the corresponding bracket on $\lie p_1$ has image in $\lie p_1$ and that
the Lie subgroup $N$ is closed in $G$. 
The Lagrangian subspace
 $\lie g_0\times\lie p_1$ of $\lie g\times\lie g^*$
satisfies $(**)$ and the corresponding Dirac structure $\mathsf D'$
is equal 
to $\mathsf D_G$ by definition. Since $N$ corresponds to the Lie subalgebra
$\lie g_0$ of $\lie g$ and fixes
$\lie g/\lie g_0\times\lie p_1$ pointwise
 by Remark \ref{action_of_J_on_lieD}, we get from 
Theorem \ref{th_on_D'}
that the quotient $(G/N,q_N(\mathsf{D_G}))$ is a Dirac homogeneous
space of the Dirac Lie group $(G,\mathsf D_G)$. We will study this particular 
Dirac homogeneous space in section \ref{special_case}.
\end{enumerate}
\end{example}

\begin{remark}
The previous theorem does not reduce, in the case of Poisson Lie groups,
to the same theorem but with $\lie g_0$ 
set to be $\{0\}$, as in many 
other statements of this work. Indeed, the theorem of Drinfel$'$d (\cite{Drinfeld93}) gives a correspondence
between \emph{Poisson} homogeneous structures on $G/H$ of a \emph{Poisson} Lie group 
$(G,\pois)$ and Lagrangian subalgebras $\lie D\subseteq \lie g\times\lie g^*$
satisfying $A_{h}\lie D\subseteq \lie D$ for all $h\in H$ and \emph{the equality}
$\lie D\cap(\lie g\times\{0\})=\lie h\times \{0\}$ (see \cite{Drinfeld93}),
that is, $\lie g_0'=\lie h$. 

Here, we have $\lie D\cap (\lie g\times\{0\})=\lie g_0'\times\{0\}$ and we get
the following cases.
\begin{enumerate}
\item $\lie g_0\subseteq \lie h=\lie g_0'$: the Dirac homogeneous space 
is a Poisson homogeneous space of the Dirac Lie group $(G,\mathsf{D}_G)$.
Furthermore, if $\lie g_0=\{0\}$, the Dirac Lie group is a Poisson Lie group
and we are in the situation of Drinfel$'$d's theorem. The case $\lie g_0=\lie
h$ (see the second part of Example
\ref{example}) will be studied in section \ref{special_case}.
\item  $\lie h\subsetneq \lie g_0'$: the Dirac homogeneous space has
non-trivial $\mathsf{G_0}$-distribution and is hence not a Poisson homogeneous
space of $(G,\mathsf{D}_G)$. 
Therefore, in the case where $\lie g_0=\{0\}$, we obtain
a Dirac homogeneous space of a 
Poisson Lie group.
\end{enumerate}
\end{remark}

\begin{example}
Consider an $n$-dimensional torus $G:=\mathbb{T}^n$. 
In Corollary \ref{abelianDiracLie}, we have recovered the
fact  that the only
multiplicative Poisson structure on $\mathbb{T}^n$ is the trivial Poisson
structure $\pi=0$, that is $\mathsf D_\pi=\{0\}\operp T^*\mathbb{T}^n$.  
The Lie algebra structure on $\lie g\times\lie g^*$ is
given by $[(x,\xi),(y,\eta)]=([x,y],\ad_x^*\eta-\ad_y^*\xi)=(0,0)$ since the
Lie group $\mathbb{T}^n$ is Abelian. Hence, every Dirac subspace of $\lie
g\times\lie g^*$ is a Lagrangian subalgebra. Indeed, it is easy to verify that
each left invariant Dirac structure on $\mathbb{T}^n$ is an integrable Dirac homogeneous
space of the trivial Poisson Lie group $(\mathbb{T}^n,\pi=0)$.

In general, if $(G,\pi=0)$ is a trivial Poisson Lie group, the Lie algebra
structure on $\lie g\times\lie g^*$ is given by 
$[(x,\xi),(y,\eta)]=([x,y],\ad_x^*\eta-\ad_y^*\xi)$. The  $(G,\pi=0)$-homogeneous
 structures  on $G$ are here  
the left invariant Dirac structures
$\lie D^L$ on $G$. 
Hence, the integrable  homogeneous Dirac structures on $G$ are the left
invariant Dirac structures $\lie D^L$ such that
$\lie D$ is a subalgebra of $\lie g\times\lie g^*$. But
$\lie D$ is a subalgebra of $\lie g\times\lie g^*$ if and only if  
\[\left\langle
\right([x,y],\ad_x^*\eta-\ad_y^*\xi),(z,\zeta)\rangle=\xi([y,z])+\eta([z,x])+\zeta([x,y])=0
\]
for all $(x,\xi)$, $(y,\eta)$ and $(z,\zeta)\in \lie D$.
 We recover 
Proposition \ref{invariant_Dirac_closed}
about integrability of a  left-invariant Dirac structure on $G$, see also \cite{Milburn07}. 
\end{example}

\section{The Poisson Lie group induced as a Dirac homogeneous space of a
  Dirac Lie group if  $N$ is closed in  $G$}\label{special_case}
We will see in this
section that if $(G,\mathsf D_G)$ is an 
integrable Dirac Lie group, such that the leaf $N$ of
the involutive subbundle $\mathsf{G_0}$ through the neutral element $e$ is a
\emph{closed} normal subgroup of $G$, then the Lie bialgebra
$(\lie g/\lie g_0,\lie p_1)\simeq \left(\lie g/\lie g_0, \left(\lie g/\lie
  g_0\right)^*\right)$ arises from a natural multiplicative Poisson structure $\pi$
on the quotient $G/N$, that makes $(G/N,\pi)$ a Poisson homogeneous
space of $(G,\mathsf D_G)$.

\begin{theorem}\label{canonical_Poisson_Lie_group}
Let $(G,\mathsf{D}_G)$ be a Dirac Lie group such that
the bracket on $\lie p_1\times\lie p_1$ has image in $\lie p_1$
and $N$ is closed in $G$. 
The
reduced Dirac structure $\mathsf{D}_{G/N}=q_N(\mathsf D_G)$ on $G/N$
(that is a homogeneous Dirac structure of $(G,\mathsf D_G)$, see Example \ref{example}) is the graph of a
skew-symmetric
multiplicative bivector field $\pi$ on $G/N$
(as in Example \ref{exPoisson}).
If $(G,\mathsf D_G)$ is integrable, the quotient 
$(G/N,\mathsf{D}_{G/N})=:(G/N, \pi)$ is a Poisson Lie group, and the induced Lie bialgebra
$(\lie g/\lie g_0,\lie p_1)
\simeq \left(\lie g/\lie g_0,(\lie g/\lie g_0)^*\right)$  as in Remark \ref{bracket_of_Xmu}
is the Lie bialgebra 
defined by $(G,\mathsf D_G)$ as in Theorems \ref{cocycle} and \ref{liealgebra}.
\end{theorem}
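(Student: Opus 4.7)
The plan is to reduce the statement to a computation of the characteristic distribution of $\mathsf{D}_{G/N}$, then to transfer multiplicativity and integrability across the quotient, and finally to identify the induced bracket on $\lie p_1 \simeq (\lie g/\lie g_0)^*$ with that of Definition \ref{defbracket}.

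First I would check that the reduction is available. Since $\V_N = \lie g_0^L = \mathsf{G_0}$ by Proposition \ref{oncardis}, we have $\K_N^\perp = TG \operp \mathsf{P_1}$, which tautologically contains $\mathsf{D}_G$; the hypothesis on the bracket together with Theorem \ref{action_of_N} gives canonicity of the $N$-action; and $N$ being closed yields freeness and properness. Therefore $\mathsf{D}_{G/N} = q_N(\mathsf{D}_G)$ is a well-defined Dirac structure on $G/N$, which by Example \ref{example}(2) is already known to be a Dirac homogeneous space of $(G,\mathsf{D}_G)$. Next I would show that the $\mathsf{G_0}$ of $\mathsf{D}_{G/N}$ is trivial: if $(\bar v_{gN}, 0) \in \mathsf{D}_{G/N}(gN)$, then by \eqref{bard} there is $v \in T_g G$ with $v \sim_{q_N} \bar v_{gN}$ and $(v,0) \in \mathsf{D}_G(g)$, forcing $v \in \mathsf{G_0}(g) = \ker T_g q_N$ and hence $\bar v_{gN} = 0$. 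Consequently $\mathsf{D}_{G/N}$ is the graph of a skew linear map $\pi^\sharp : T^*(G/N) \to T(G/N)$, equivalently of a skew bivector field $\pi \in \Gamma(\bigwedge^2 T(G/N))$.

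For multiplicativity of $\pi$, I would exploit that $N$ is normal so that $m_{G/N}$ fits in a commutative diagram with $m: G \times G \to G$ under $q_N \times q_N$ and $q_N$. Given $(\bar v_{\bar g \bar h}, \bar \alpha_{\bar g \bar h}) \in \mathsf{D}_{G/N}(\bar g\bar h)$, I lift to $(u, q_{gh}^* \bar \alpha_{\bar g \bar h}) \in \mathsf{D}_G(gh)$ with $u \sim_{q_N} \bar v$ via \eqref{bard}, apply forward Dirac-ness of $m: G\times G \to G$ to obtain $(w_g,\beta_g)\in\mathsf{D}_G(g)$ and $(z_h,\gamma_h)\in\mathsf{D}_G(h)$ with the appropriate relations, and project via $q_N \times q_N$ to the required decomposition on $G/N$; the $\K_N$-ambiguity in the tangent lift is absorbed by $\mathsf{G_0} \subseteq \mathsf{G_0}$-summands, while the covector side is unambiguous since both sides live in the pullback $q_N^*T^*(G/N)$. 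If moreover $(G,\mathsf{D}_G)$ is integrable, the regular reduction theory of Section \ref{reduction} (following \cite{JoRa08}) gives integrability of $\mathsf{D}_{G/N}$, which by Example \ref{exPoisson} means $[\pi,\pi]=0$; combined with multiplicativity, this makes $(G/N,\pi)$ a Poisson Lie group.

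For the identification of the Lie bialgebra, the Lie algebra of $G/N$ is $\lie g/\lie g_0$ with its quotient bracket (correct because $\lie g_0$ is an ideal by Corollary \ref{adG_invariance}), and $(\lie g/\lie g_0)^* \simeq \lie g_0^\circ = \lie p_1$. Given $\xi \in \lie p_1$ and $X_\xi \in \mx(G)$ with $(X_\xi,\xi^L)\in\Gamma(\mathsf{D}_G)$, the pushforward $\bar X_\xi := Tq_N \circ X_\xi$ is well-defined (the $\mathsf{G_0}$-ambiguity maps to zero) and satisfies $(\bar X_\xi,\bar\xi^L)\in\Gamma(\mathsf{D}_\pi)$, so $\bar X_\xi = -\pi^\sharp(\bar\xi^L)$. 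Applying Remark \ref{bracket_of_Xmu}(3) for $(G/N,\pi)$ to $\bar\xi,\bar\eta \in (\lie g/\lie g_0)^*$ and using $\bar\eta^L(\bar X_\xi) \circ q_N = \eta^L(X_\xi)$, one obtains $[\bar\xi,\bar\eta]_\pi(\bar x) = \dr_e(\eta^L(X_\xi))(x) = [\xi,\eta](x)$ for any lift $x$ of $\bar x$, which is precisely Definition \ref{defbracket}. The main obstacle will be the bookkeeping in the multiplicativity step, where one must track that arbitrary tangent lifts differ by elements of $\V_N \subseteq \mathsf{G_0}$ and that the resulting ambiguity in $(w_g,z_h)$ lies in $\Gamma(\mathsf{D}_G)$ before projecting; once this is in place, the bialgebra identification reduces to chasing $q_N$-related sections.
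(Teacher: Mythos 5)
Your proposal is correct and follows essentially the same route as the paper: set up the regular reduction (using $\V_N=\mathsf{G_0}$, canonicity from Theorem \ref{action_of_N}, and closedness of $N$), establish the graph property, verify multiplicativity by lifting an element of $\mathsf{D}_{G/N}(gg'N)$ to $\mathsf{D}_G(gg')$ and pushing the decomposition back down through $q_N$, invoke the reduction theory for integrability, and identify the bracket via $\eta^L(X_\xi)=\bar\eta^L(\bar X_\xi)\circ q_N$. The only cosmetic difference is that you prove $\bar{\mathsf{G_0}}=0$ directly from the pointwise description of the forward image, whereas the paper establishes the equivalent statement $\bar{\mathsf{P_1}}=T^*(G/N)$ by means of descending sections; both are valid and the rest of the argument is unaffected.
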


Since each normal subgroup of a simply connected Lie group $G$ is 
closed (see \cite{HiNe91}), we have the following immediate corollary.
\begin{corollary}
Let  $(G,\mathsf{D}_G)$ be an integrable, simply connected Dirac Lie group. Then
$\mathsf{D}_G$ is the pullback Dirac structure defined on $G$ by $q_N:G\to G/N$ and
a multiplicative Poisson bracket on $G/N$.
\end{corollary}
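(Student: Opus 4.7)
The plan is to construct $\pi$ as a Dirac reduction of $\mathsf D_G$ by $N$, then promote multiplicativity of $m_G$ to multiplicativity of $\pi$, and finally match the Lie bialgebra on $(\lie g/\lie g_0,\lie p_1)$ already attached to $(G,\mathsf D_G)$ with the one produced by $(G/N,\pi)$. First, the hypothesis that the bracket on $\lie p_1$ takes values in $\lie p_1$ activates Theorem \ref{action_of_N} and gives a canonical right action of $N$ on $(G,\mathsf D_G)$; closedness of $N$ makes this action free and proper, so $G/N$ is a Lie group and $q_N\colon G\to G/N$ a surjective submersion. Since $\V_N=\lie g_0^L=\mathsf{G_0}$ by Proposition \ref{oncardis}, we have $\K_N=\mathsf{G_0}\operp\{0\}\subseteq\mathsf D_G$, hence $\mathsf D_G\cap\K_N^\perp=\mathsf D_G$ is a vector bundle and the reduced Dirac structure $\mathsf D_{G/N}:=q_N(\mathsf D_G)$ is well defined.

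Next I would show that $\mathsf D_{G/N}=\mathsf D_\pi$ for some smooth bivector $\pi$ by checking pointwise that the induced $\mathsf{G_0}^{G/N}(gN)=\{0\}$ and $\mathsf{P_1}^{G/N}(gN)=T_{gN}^*(G/N)$. For the first, any $\bar v$ with $(\bar v,0)\in\mathsf D_{G/N}(gN)$ lifts to $v_g\in\mathsf{G_0}(g)=T_eL_g\lie g_0=\ker T_gq_N$, so $\bar v=0$. For the second, any $\bar\alpha\in T_{gN}^*(G/N)$ satisfies
\[
(T_gq_N)^*\bar\alpha\in(\ker T_gq_N)^\circ=(T_eL_g\lie g_0)^\circ=\lie p_1^L(g)=\mathsf{P_1}(g),
\]
so it is the covector component of an element of $\mathsf D_G(g)$ whose projection lies in $\mathsf D_{G/N}(gN)$. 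Combined with the Lagrangian property, this forces $\mathsf D_{G/N}$ to be the graph of a skew-symmetric fiberwise map $T^*(G/N)\to T(G/N)$; smoothness is inherited from $\mathsf D_{G/N}$, yielding $\pi\in\Gamma(\bigwedge^2 T(G/N))$.

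Multiplicativity of $\pi$ will follow from the identity $q_N\circ m_G=m_{G/N}\circ(q_N\times q_N)$. The product Dirac structure on $(G/N)\times(G/N)$ is the $(q_N\times q_N)$-forward image of $\mathsf D_G\oplus\mathsf D_G$, and $m_G$ is forward Dirac by the Dirac Lie group hypothesis; composing and then using surjectivity of the submersion $q_N\times q_N$ to lift sections on the source, one concludes that $m_{G/N}$ is forward Dirac, i.e.\ $(G/N,\mathsf D_\pi)$ is a Dirac Lie group. By Example \ref{eqDirac-Poisson}, this is equivalent to $\pi$ being multiplicative. If $(G,\mathsf D_G)$ is integrable, canonical Dirac reduction preserves integrability (as invoked in Corollary \ref{closed}), so $\mathsf D_\pi$ is integrable and $\pi$ is Poisson by Example \ref{exPoisson}.

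For the bialgebras, identify $(\lie g/\lie g_0)^*$ with $\lie p_1$ via $\bar\xi\leftrightarrow\xi:=(T_eq_N)^*\bar\xi$. The equivariance $q_N\circ L_g=L_{gN}\circ q_N$ gives $q_N^*\bar\xi^L=\xi^L$, and by construction any $X_\xi$ with $(X_\xi,\xi^L)\in\Gamma(\mathsf D_G)$ projects to the unique $\bar X_{\bar\xi}=\pi^\sharp(\bar\xi^L)$, whence $q_N^*\!\bigl(\bar\eta^L(\bar X_{\bar\xi})\bigr)=\eta^L(X_\xi)$. Differentiating at $e$ and comparing Definition \ref{defbracket} with the Poisson-Lie bracket formula in Remark \ref{bracket_of_Xmu}(3) shows that $[\bar\xi,\bar\eta]_\pi$ corresponds to $[\xi,\eta]$, so the Lie bialgebra of $(G/N,\pi)$ is exactly $(\lie g/\lie g_0,\lie p_1)$ from Theorems \ref{cocycle} and \ref{liealgebra}. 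The main obstacle I anticipate is the forward Dirac composition step for multiplicativity: passing cleanly from forward Diracness of $m_{G/N}\circ(q_N\times q_N)$ to that of $m_{G/N}$ requires careful lifting of sections across the surjective submersion $q_N\times q_N$, and all the identification checks in the last paragraph must be done consistently with these lifts so that the two brackets are genuinely the same element of $\lie g^*$ and not merely isomorphic copies.
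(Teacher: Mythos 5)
Your argument is, in substance, a reconstruction of Theorem \ref{canonical_Poisson_Lie_group} (whose proof the paper gives separately and then simply invokes here), and that part of the outline is sound: the reduction by the canonical $N$-action, the identification of the reduced structure as the graph of a bivector via $\bar{\mathsf{P_1}}=T^*(G/N)$, the descent of multiplicativity through $q_N\circ m_G=m_{G/N}\circ(q_N\times q_N)$, and the preservation of integrability all match the paper's strategy. However, two steps that constitute the actual content of the \emph{corollary} are missing. First, you treat closedness of $N$ and the condition that the bracket on $\lie p_1$ has image in $\lie p_1$ as hypotheses; neither is assumed in the corollary. The bracket condition follows from integrability via Theorem \ref{liealgebra}, and closedness of $N$ is exactly what simple connectedness buys: the integral subgroup of the ideal $\lie g_0$ in a simply connected Lie group is automatically closed (the paper cites Hilgert--Neeb for this). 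Without this observation the quotient $G/N$ need not exist, and the corollary would not follow from the theorem.

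Second, and more seriously, the proposal stops short of the stated conclusion. You establish that $q_N(\mathsf D_G)=\mathsf D_\pi$ for a multiplicative Poisson bivector $\pi$, but the corollary asserts that $\mathsf D_G$ equals the \emph{pullback} $q_N^*\mathsf D_\pi$. This requires the additional identification $\mathsf D_G=q_N^*\bigl(q_N(\mathsf D_G)\bigr)$, which holds by the criterion recalled in Section \ref{section_generalities}: $\mathsf D=q^*(q(\mathsf D))$ if and only if the vertical space of the action is contained in $\mathsf{G_0}$. Here $\V_N=\lie g_0^L=\mathsf{G_0}$ by Proposition \ref{oncardis}, so the criterion applies, but as written your proof never closes this loop. (The bialgebra comparison in your last paragraph, by contrast, is not needed for the corollary at all.)
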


\begin{proof}[of Theorem \ref{canonical_Poisson_Lie_group}]
 Since $\lie
g_0$ is an ideal in $\lie g$, the Lie subgroup $N$ is normal in
$G$. If it is closed in $G$, the left or right action of $N$ on $G$ is free and proper
and the reduced space $G/N$ is a Lie group. Let $q_{N}:G\to G/N$ be the
projection.

The vertical distribution $\V_N$ of the left (right) action of $N$ on $G$ is
the span
of the right-invariant vector fields $x^R$, for all $x\in\lie g_0$, that
is, $\V_N=\mathsf{G_0}$. This yields $\K_N=\V_N\operp\{0\}=\mathsf{G_0}\operp\{0\}$,
and hence $\K_N^\perp=TG\operp\mathsf{P_1}$. The intersection
$\mathsf{D}_G\cap\K_N^\perp$ is consequently equal to $\mathsf{D}_G$ 
and has constant rank on $G$.  
Recall that 
the set of smooth local sections of 
$\mathsf{D}_{G/N}$ is given by
\begin{equation*}
\left\{(\bar X,\bar\alpha)\in\mx(G/N)\times\Omega^1(G/N)\left|
\begin{array}{c} 
\exists
X\in\mx(G)\text{ such that } X\sim_{q_N}\bar X\\
\text{ and }
(X,q_N^*\bar\alpha)\in\Gamma(\mathsf{D}_G)
\end{array}\right\}.\right.
\end{equation*}

Since $N$ lets $(G,\mathsf D_G)$ invariant by Theorem \ref{action_of_N}
and is connected by definition, we have 
$[\Gamma(\K_N),\Gamma(\mathsf D_G)]\subseteq \Gamma(\mathsf D_G)$ and we get using 
a result in \cite{JoRaZa11} that $\mathsf D_G$ is spanned by its 
$N$-descending  sections, that is, the pairs $(X,\alpha)\in\Gamma(\mathsf D_G)$ with
$[X,\Gamma(\V_N)]\subseteq \Gamma(\V_N)$ and $\alpha\in\Gamma(\V_N^\circ)^{N}$ (see
\cite{JoRaSn11} or \cite{JoRaZa11}). 
 The vector subbundle 
 $\mathsf{P_1}=\V_N^\circ$ of $T^*G$ is
spanned by its descending sections since $\V_N$ is a smooth integrable subbundle 
of $TG$ (see \cite{JoRaZa11}), and
the push-forwards of the descending sections of $\V_N^\circ$ are exactly the
sections of  the cotangent
space $T^*(G/N)$ of $G/N$. 
Since $\mathsf{D}_G$ is spanned by its descending
sections $(X,\alpha)\in\Gamma(\mathsf D_G)$,
$\mathsf{P_1}$ is in particular spanned by descending sections
belonging to descending pairs in $\Gamma(\mathsf{D}_G)$. This shows that the
cotangent distribution $\bar{\mathsf{P_1}}$ defined by $\mathsf{D}_{G/N}$ on
$G/N$ is equal to $T^*(G/N)$, and 
$(T_gq_N)^*(T_{gN}^*G/N)=\mathsf{P_1}(g)$ for all $g\in G$.
This yields that $\mathsf{D}_{G/N}$ is the graph of a 
skew-symmetric
bivector field $\pi$
on
$G/N$. Thus, if we show that $(G/N,\mathsf{D}_{G/N})$ is a Dirac Lie group, we
will have simultaneously proved that $( G/N,\pi)$ is multiplicative
by Example \ref{eqDirac-Poisson}.

We thus show that $\mathsf{D}_{G/N}$ is multiplicative. Choose a product
$gNg'N=gg'N\in G/N$ and \linebreak
$(\bar v_{gg'N},\bar\alpha_{gg'N})\in \mathsf{D}_{G/N}(gg'N)$. Then there exists
a pair  $(v_{gg'},\alpha_{gg'})\in\mathsf{D}_G(gg')$ such that
$$T_{gg'}q_Nv_{gg'}=\bar v_{gg'N}\quad \text{ and }\quad 
(T_{gg'}q_N)^*\bar\alpha_{gg'N}=\alpha_{gg'}.$$
Since $\mathsf{D}_G$ is multiplicative, we can find
$w_g\in T_gG$ and $u_{g'}\in T_{g'}G$ such that
\[T_gR_{g'}w_g+T_{g'}L_gu_{g'}=v_{gg'}, \quad 
\left(w_g,(T_{g}R_{g'})^*\alpha(gg')\right)\in\mathsf{D}_G(g),
\quad \text{ and }\left(u_{g'},(T_{g'}L_{g})^*\alpha_{gg'}\right)\in\mathsf{D}_G(g').
\]
We have  $\gamma_{g'}:=(T_{g'}L_{g})^*\alpha_{gg'}\in\mathsf{P_1}(g')$ and 
$\beta_g:=(T_{g}R_{g'})^*\alpha_{gg'}\in\mathsf{P_1}(g)$
and hence, by the considerations above, 
there exist $\bar\beta_{gN}\in\bar{\mathsf{P_1}}(gN)$ and 
$\bar\gamma_{g'N}\in\bar{\mathsf{P_1}}(g'N)$ satisfying
   $(T_gq_N)^*\bar\beta_{gN}=\beta_g$ and $(T_{g'}q_N)^*\bar\gamma_{g'N}=\gamma_{g'}$.
By construction of $\mathsf{D}_{G/N}$, we have then $(T_gq_Nw_g,\bar\beta_{gN})\in
\mathsf{D}_{G/N}(gN)$ and 
$(T_{g'}q_Nu_{g'},\bar\gamma_{g'N})\in\mathsf{D}_{G/N}(g'N)$.

We compute
\begin{align*}
T_{g'N}L_{gN}T_{g'}q_Nu_{g'}+T_{gN}R_{g'N}T_{g}q_Nw_g
=T_{g'g}q_N\left(T_{g'}L_gu_{g'}+T_gR_{g'}w_g \right)
=T_{gg'}q_Nv_{gg'}=\bar v_{gg'N},
\end{align*}
\begin{align*}
(T_{g'}q_N)^*\left((T_{g'N}L_{gN})^*\bar\alpha_{gg'N}\right)
=(T_{g'}L_g)^*\left((T_{gg'}q_N)^*\bar\alpha_{gg'N}\right)
=(T_{g'}L_g)^*\,\alpha_{gg'}=\gamma_{g'}=(T_{g'}q_N)^*\bar\gamma_{g'N},
\end{align*}
and in the same manner
$(T_{g}q_N)^*\left((T_{gN}R_{g'N})^*\bar\alpha_{gg'N}\right)
=\beta_{g}=(T_{g}q_N)^*\bar\beta_{gN}$.
This leads to 
\[(T_{g'N}L_{gN})^*(\bar\alpha_{gg'N})=\bar\gamma_{g'N}
\quad 
\text{ and }
\quad
(T_{gN}R_{g'N})^*(\bar\alpha_{gg'N})=\bar\beta_{gN}
\]
since $q_N$ is a smooth surjective submersion. Hence, we have shown that
$(G/N,\mathsf{D}_{G/N})$ is a Dirac Lie group which we will sometimes 
write $(G/N,\pi)$ in the following since $\mathsf{D}_{G/N}$ is the graph of a multiplicative
skew-symmetric bivector field $\pi$ on $G/N$.

The last statement is obvious with the considerations above and Proposition
\ref{other_formula_for_the_bracket}.
\end{proof}

\bigskip

Furthermore, we can show that each Dirac homogeneous structure  on $G/H$, $H$ a
closed subgroup of $G$, can be assigned to a unique Dirac homogeneous space of 
the Poisson Lie
group $(G/N,\pi)$ if the product $N\cdot H$ remains closed in $G$. 

Let $(G/H,\mathsf D_{G/H})$ be a Dirac homogeneous space. We assume
that the Lie subgroup $N\cdot H$ (with Lie algebra $\lie g_0+\lie h$) is closed in $G$.
The Lie group $N$ acts by smooth left actions given by $n\cdot gH= ngH$
for all $n\in N$ and $g\in G$
on the homogeneous space $G/H$. This is well-defined since
if $g\inv g'\in H$, we have $g\inv n\inv ng'\in H$ and hence $ngH=ng'H$. 
It is easy to check that the quotient
of $G/H$ by the left action of $N$ is equal to the quotient of $G$ by the right action
of $N\cdot H$. 
Indeed, the class of $gH$ in $(G/H)/N$ is the set $\{ngH\mid n\in N\}=NgH$. But 
since $N$ is normal in $G$, this class is equal to $gNH$, which is the class of
the element $g\in G$ in the quotient by the right action of $N\cdot H$ on $G$.
Since $G/(N\cdot H)$ has the structure of a smooth regular quotient manifold and 
the maps $q_H$ and $q_{N\cdot H}$ are smooth surjective submersions,
the projection $q_{N,H}:G/H\to (G/H)/N$ is also a smooth surjective
submersion.

In the second diagram, we have $(G/N)/(NH/N)\simeq G/(N\cdot H)\simeq (G/H)/N$.
\begin{displaymath}\begin{xy}
\xymatrix{
N\times G\ar[d]_{\operatorname{Id}_N\times q_H}\ar[r]^{m\an{N\times G}}&G\ar[d]^{q_H}\\ 
N\times G/H\ar[r]&G/H
}
\end{xy}
\qquad\qquad
\begin{xy}
\xymatrix{ 
G\ar[d]_{q_{N}}\ar[rr]^{q_H}\ar[rrd]_{q_{NH}}&&G/H\ar[d]^{q_{N,H}}\\
G/N\ar[rr]_{q_{N,NH}}&& G/(N\cdot H)
}
\end{xy}
\end{displaymath}

We have the following theorem. We assume here for simplicity
that the Dirac Lie group $(G,\mathsf
D_G)$ is integrable, but analogous results can be shown for a Dirac Lie group that
is invariant under the action of the induced Lie subgroup $N$.
\begin{theorem}
Let $(G/H,\mathsf D_{G/H})$ be an integrable Dirac homogeneous space of the
integrable Dirac Lie
group $(G,\mathsf D_G)$ such that $N$ and $NH$ are closed in $G$. 

The Lie group $N$ acts smoothly on the left on
$(G/H,\mathsf D_{G/H})$ by Dirac actions, and the Lie group
$N\cdot H$ acts smoothly on the right on the Dirac manifold $(G,\mathsf D')$ 
by Dirac actions. 

The quotient Dirac structures  on
$G/(N\cdot H)\simeq (G/H)/N$  are equal and will be called $\mathsf D_{G/(NH)}$.
The pair $(G/(N\cdot H), \mathsf D_{G/(NH)})$
is a Dirac homogeneous space of the Poisson Lie group $(G/N,\pi)$ and of the
Dirac Lie group $(G,\mathsf D_G)$.

Conversely, if $(G/(NH),\mathsf D_{G/(NH)})$ is a Dirac homogeneous space of
the Poisson Lie group $(G/N,\pi)$, then the pullbacks 
$(G/H,q_{N,H}^*(\mathsf D_{G/(NH)}))$
and $(G,q_{NH}^*(\mathsf D_{G/(NH)}))$ are Dirac homogeneous spaces
of the Dirac Lie group $(G,\mathsf D_G)$.
\end{theorem}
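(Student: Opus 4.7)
The plan is to lift everything to the pullback Dirac manifold $(G,\mathsf{D}')$ of Subsection \ref{subsectionD'}, where Theorem \ref{th_on_D'} and Remark \ref{action_of_J_on_lieD} give direct control over both $N$ and $H$, and then transfer the statements to $G/H$, $G/N$ and $G/(NH)$ via the naturality of forward Dirac reduction.

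First I would check that the right action of $N\cdot H$ on $(G,\mathsf D')$ is canonical. The $H$-factor is immediate from Theorem \ref{th_on_D'}, which applies because $\mathsf D_{G/H}(eH)$ satisfies property $(*)$. For the $N$-factor, integrability of $(G,\mathsf D_G)$ combined with Lemma \ref{lemma_on_action_of_G_0} gives right $N$-invariance of $\mathsf D_G$, which by part 2 of Remark \ref{action_of_J_on_lieD} transfers to $\mathsf D'$. Smoothness, freeness and properness of the combined right action follow from the assumptions that $H$ and $NH$ are closed and that $N$ is normal in $G$. In parallel, to see canonicity of the left $N$-action on $(G/H,\mathsf D_{G/H})$ it suffices to show that left multiplication by $n\in N$ preserves $\mathsf D'$: the decomposition \eqref{definition_of_D'} splits any spanning section of $\mathsf D'$ into a left-invariant piece and a piece lying in $\mathsf D_G$, both of which are preserved by $L_n^*$ thanks to left-invariance in the first case and Lemma \ref{lemma_on_action_of_G_0} in the second. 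Since $q_H:(G,\mathsf D')\to(G/H,\mathsf D_{G/H})$ is a forward Dirac surjective submersion intertwining $L_n$ on $G$ with the $N$-action on $G/H$, the action descends canonically.

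The equality of the two reduced Dirac structures on $(G/H)/N\simeq G/(NH)$ is then deduced by an iterated reduction argument: both are forward Dirac images of $\mathsf D'$ along the factorizations $q_{N,H}\circ q_H=q_{NH}=q_{N,NH}\circ q_N$ of the same smooth surjective submersion $G\to G/(NH)$, and forward Dirac image commutes with composition of such submersions whenever the intersection $\mathsf D'\cap\K_{NH}^\perp$ has constant rank; here this is automatic because $\mathsf{G_0}'$ contains the vertical distribution of the $NH$-action, so $\mathsf D'\cap\K_{NH}^\perp=\mathsf D'$. Call the common structure $\mathsf D_{G/(NH)}$. Chasing the commutative diagram whose top row is the $G$-action on $G/H$ and whose bottom row is the induced $G/N$-action on $G/(NH)$, with vertical arrows the forward Dirac quotients $q_N$ and $q_{N,H}$, shows that the bottom row is forward Dirac: any section of $\mathsf D_{G/(NH)}$ lifts to $\mathsf D_{G/H}$, then to $\mathsf D_G\oplus\mathsf D_{G/H}$ by hypothesis, and pushes back down to $\mathsf D_{G/N}\oplus\mathsf D_{G/(NH)}$. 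This furnishes the Dirac homogeneous structure over $(G/N,\pi)$; precomposing with the forward Dirac map $q_N$ from Theorem \ref{canonical_Poisson_Lie_group} yields the $(G,\mathsf D_G)$-structure.

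For the converse, given a Dirac homogeneous structure $\mathsf D_{G/(NH)}$ for $(G/N,\pi)$, I would pull it back along the surjective submersions $q_{N,H}$ and $q_{NH}$ and run the same diagram chases in reverse, using that $\mathsf D_G$ itself is $q_N^*\pi$ by Theorem \ref{canonical_Poisson_Lie_group}, so that any section of the $\pi$-graph at $gN$ lifts to a section of $\mathsf D_G$ at $g$. The main obstacle, and the step I expect to require the most care, is the equality of the two iterated Dirac quotients on $G/(NH)$ as Dirac bundles (not merely as underlying subsets of the Pontryagin bundle): this boils down to verifying that every smooth $NH$-descending section of $\mathsf D'$ decomposes, modulo $\Gamma(\mathsf{G_0}')$, into an $H$-descending section projecting to $\mathsf D_{G/H}$ and a piece tangent to the $N$-direction, and this decomposition relies crucially on closedness of $NH$ and on the regularity of the characteristic distributions guaranteed by Proposition \ref{shape_of_gnul_and_pone}.
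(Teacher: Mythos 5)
Your proposal is correct and follows the same skeleton as the paper's proof (lift everything to $(G,\mathsf D')$, invoke Theorem \ref{th_on_D'} and Remark \ref{action_of_J_on_lieD} for the $NH$-invariance, reduce in stages, and identify the two quotients), but several sub-arguments are carried out by a different method. Where you establish the homogeneous-space structure of $G/(NH)$ over $(G/N,\pi)$ and the converse by direct diagram chases with sections of the various Dirac bundles, the paper instead verifies the algebraic criteria of Theorem \ref{drinfeld}: it computes $(T_{eN}q_{N,NH})^*\mathsf D_{G/(NH)}(eNH)=\lie D/(\lie g_0\times\{0\})$, checks that this is a Lagrangian subalgebra sandwiched between $(\lie g_0+\lie h)/\lie g_0\times\{0\}$ and $\lie g/\lie g_0\times(\lie h^\circ\cap\lie p_1)$, and uses the $\bar A_{nhN}$-invariance from Remark \ref{action_of_GmodN}; the converse is likewise dispatched by citing Example \ref{example} rather than reversing a chase. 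The algebraic route is shorter once the classification theorem is in hand, while your geometric chase is more self-contained but forces you to track that all covectors appearing are $q_N$- and $q_{NH}$-pullbacks (which Theorem \ref{canonical_Poisson_Lie_group} guarantees). Two smaller points: your explicit verification that $L_n^*$ preserves $\mathsf D'$, by splitting a spanning section $(X_\xi+x^L,\xi^L)$ into its left-invariant part (fixed by $L_n^*$) and its $\mathsf D_G$-part (preserved by Lemma \ref{lemma_on_action_of_G_0}), is actually more detailed than the paper, which only records right $N$-invariance of $\mathsf D'$ via Remark \ref{action_of_J_on_lieD} before asserting canonicity of the left action. Conversely, for the equality $q_{N,H}(\mathsf D_{G/H})=\mathsf D_{G/(NH)}$ you flag a delicate decomposition of $NH$-descending sections, but the paper avoids this entirely: it proves only the inclusion $q_{N,H}(\mathsf D_{G/H})\subseteq\mathsf D_{G/(NH)}$ by a section chase and concludes equality because both are Lagrangian subbundles of the same rank; you could adopt that rank argument to close the step you were worried about.
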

\begin{proof}
Consider again the Dirac subspace $\lie D=(T_eq_H)^*\mathsf D_{G/H}(eH)\subseteq 
\lie g\times\lie p_1$. We write $\bar{\lie D}$ for the quotient 
$\lie D/(\lie g_0\times\{0\})$.  Since  $(G,\mathsf D_G)$ and $(G/H,\mathsf D_{G/H})$ are integrable,
we get from  Theorem \ref{drinfeld} that  $\bar{\lie D}$ is
a subalgebra of $\lie g/\lie g_0\times\lie p_1$ and from Remark \ref{action_of_J_on_lieD} that
$\bar{\lie D}$
is $N$-invariant. We have then 
$A_{nh}\bar{\lie D}\subseteq \bar{\lie D}$ for all $nh\in  N\cdot H$ and, by 
Theorem \ref{drinfeld}, the 
group $N\cdot H$ acts on $(G,\mathsf D')$ by Dirac actions and the quotient
$(G/(N\cdot H),q_{NH}(\mathsf D'))=:(G/(N\cdot H),\mathsf D_{G/NH})$ is an integrable 
Dirac homogeneous space of the Dirac Lie group $(G,\mathsf D_G)$.

Next we show that the left action $\Phi$
of $N$ on $(G/H,\mathsf D_{G/H})$ is canonical.
Let $(\bar X,\bar \alpha)$ be a section of $\mathsf D_{G/H}$. Then there exists
$(X,\alpha)\in\Gamma(\mathsf D')$ such that  $X\sim_{q_H}\bar X$ and 
$\alpha=q_H^*\bar\alpha$. We have $q_H\circ L_n=\Phi_n\circ q_H$
for all $n\in N$ and hence 
$L_n^*X\sim_{q_H}\Phi_n^*\bar X$ and 
$L_n^*\alpha=q_H^*\Phi_n^*\bar\alpha$. Since the action of $N$ on $(G,\mathsf
D')$ is canonical, we have $(L_n^*X,L_n^*\alpha)\in\Gamma(\mathsf D')$ and the pair
$(\Phi_n^*\bar X,\Phi_n^*\bar\alpha)$ is consequently a section of $\mathsf
D_{G/H}$.

Let $\V$ be the vertical space of the action $\Phi$ of $N$ on $G/H$ and
$\K=\V\operp\{0\}$. The subbundle $\V$ of $T(G/H)$  is
spanned by the projections to $G/H$ of the right-invariant vector fields $x^R$
on $G$, for all $x\in \lie g_0$, and $\V^\circ$ is spanned by the
push-forwards of the one-forms $\xi^R$, for all $\xi\in\lie p_1\cap\lie
h^\circ$. But since $\mathsf D'\cap\K_H^\perp=\mathsf D'\subseteq 
TG\operp (\lie p_1\cap\lie
h^\circ)^R$, and $\mathsf D_{G/H}=q_H(\mathsf D')$,
 we get easily
$\mathsf D_{G/H}\cap\K^\perp=\mathsf D_{G/H}$, which has consequently constant
dimensional fibers on $G/H$.
 Thus, by the regular reduction theorem for Dirac
manifolds, the quotient $((G/H)/N, q_{N,H}(\mathsf D_{G/H}))$ is a smooth Dirac manifold.

We have then to show that the quotient Dirac structure 
$q_{N,H}(\mathsf D_{G/H})$ is equal to $\mathsf D_{G/(NH)}$. 
If $(\tilde X,\tilde \alpha)$ is a section of $q_{N,H}(\mathsf D_{G/H})$, then there
exists $(\bar X,\bar \alpha)$ in $\Gamma(\mathsf D_{G/H})$ such that
$\bar X\sim_{q_{N,H}}\tilde X$ and $q_{N,H}^*\tilde\alpha=
\bar\alpha$. But then there exists $(X,\alpha)\in\Gamma(\mathsf D')$ such that
$X\sim_{q_H}\bar X$ and $\alpha=q_H^*\bar\alpha$. Then we have 
$\alpha=q_H^*q_{N,H}^*\tilde\alpha=q_{NH}^*\tilde\alpha$,
$X\sim_{q_{NH}}\tilde X$ and $(\tilde X,\tilde \alpha)$ is a section 
of $\mathsf D_{G/NH}$. This shows $q_{N,H}(\mathsf D_{G/H})\subseteq \mathsf D_{G/NH}$
and hence equality since both Dirac structures have the same rank.

Finally, we show that $(G/(N\cdot H),\mathsf D_{G/(NH)})$ is a Dirac
homogeneous space of the Poisson Lie group $(G/N,\pi)$. 
The Lie bialgebra of the Poisson Lie group $(G/N,\pi)$ is 
$(\lie g/\lie g_0, \lie p_1)$ with the bracket as in Theorem \ref{bracket}. 
We have 
$(T_{eN}q_{N,NH})^*\mathsf{D}_{G/(NH)}(eNH)= 
(T_eq_N)\left((T_eq_{NH})^*\mathsf{D}_{G/(NH)}(eNH)\right)=
\lie D/(\lie g_0\times\{0\})
\subseteq \lie g/\lie g_0\times\lie p_1$.
By Remark \ref{action_of_GmodN}, the action of $G$ on $\lie D/(\lie
g_0\times\{0\})$ induces an action of $G/N$ on $\lie D/(\lie
g_0\times\{0\})$; this is exactly the action of $G/N$ defined by the 
Poisson Lie group $(G/N,\pi)$ on its Lie bialgebra.
Since $\lie D/(\lie
g_0\times\{0\})$ is $NH$-invariant, it is $NH/N$-invariant under $\bar A$.
Since $\lie D/(\lie
g_0\times\{0\})$ is a Lagrangian subalgebra of $\lie g/\lie g_0\times\lie p_1$
and $(\lie g_0+\lie h)/\lie g_0\times\{0\}\subseteq  \lie D/(\lie
g_0\times\{0\})\subseteq \lie g/\lie g_0\times\lie h^\circ\cap\lie p_1 $, we are done
by Theorem \ref{drinfeld}.

\medskip

For the converse statement, we use Remark \ref{action_of_GmodN} about the
action
$\bar A$ of $G/N$ on $\lie g/\lie g_0\times\lie p_1$ and apply the first part
of Example \ref{example}
to the Dirac Lie group $(G/N,\pi)$ and the closed subgroup $NH/N$ of $G/N$ and
to the Dirac Lie group $(G,\mathsf D_G)$ and the closed subgroup $NH$ of $G$.
\end{proof}

Now choose an integrable Dirac homogeneous space $(G/H,\mathsf{D}_{G/H})$ of
$(G,\mathsf{D}_G)$ and let $(G,\mathsf D')$ be the Dirac structure on $G$ 
defined as in the preceding section. Since $\mathsf D'$ is integrable
and $\mathsf{G_0'}$ is left invariant, it is an involutive subbundle of $TG$
which is consequently integrable in the sense of Frobenius. Then the integral leaf
$J$ of $\mathsf{G_0}'$ through the neutral element $e$, which was defined
in Remark \ref{action_of_J_on_lieD},  is a Lie subgroup
of $G$. 
\begin{lemma}\label{Lemma54}
If the Lie subgroup $J$ is closed in $G$, it acts properly on the
right on $(G,\mathsf{D'})$ 
by Dirac actions. The intersection
$\mathsf D'\cap\K_J^\perp$, with $\K_J=\V_J\operp\{0\}={\lie g_0'}^L\operp\{0\}$, 
is equal to $\mathsf
D'$ by definition of $J$ and 
we can build the quotient $(G/J,q_J(\mathsf D'))$, where $q_J:G\to G/J$
is the projection.

Furthermore, since $N\subseteq J$ is a normal subgroup, we can build the
quotient Lie group 
$J/N$ if $N$ is closed in $G$. It acts properly on the right on  
$G/N$ and we can see that $G/J\simeq (G/N)/(J/N)$
as  a homogeneous space of $G/N$. 
\end{lemma}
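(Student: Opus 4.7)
The plan is to dispatch the four assertions of the lemma via successive applications of the reduction material from Section \ref{reduction} together with the integrability results of the preceding subsection. Throughout I work under the standing hypothesis preceding the lemma, namely that $(G/H,\mathsf D_{G/H})$, and hence by Corollary \ref{closed} also $(G,\mathsf D')$, is integrable.

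For the first claim, closedness of $J$ in $G$ gives properness of the right action of $J$ on $G$ by the standard fact about actions of closed subgroups. For canonicity on $(G,\mathsf D')$, observe that by construction $\mathsf{G_0}'={\lie g_0'}^L$, and so $(x^L,0)\in\Gamma(\mathsf D')$ whenever $x\in\lie g_0'$. Integrability of $\mathsf D'$ then yields
\[
[(x^L,0),(X,\alpha)]=(\ldr{x^L}X,\ldr{x^L}\alpha)\in\Gamma(\mathsf D')
\]
for every $(X,\alpha)\in\Gamma(\mathsf D')$, which is precisely the argument used in Lemma \ref{lemma_on_action_of_G_0}. Invoking the result of \cite{JoRa10b} that an integrable Dirac structure is preserved by the flow of each of its $\mathsf{G_0}$-sections, we obtain $(R_{\exp(tx)}^*X,R_{\exp(tx)}^*\alpha)\in\Gamma(\mathsf D')$ for all $x\in\lie g_0'$. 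Connectedness of $J$ (it is the integral leaf through $e$) together with $\lie g_0'$ being its Lie algebra then propagates the invariance to all of $J$.

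For the second and third claims, the vertical subbundle of this right action is $\V_J={\lie g_0'}^L=\mathsf{G_0}'$, whence $\K_J=\mathsf{G_0}'\operp\{0\}$ and $\K_J^\perp=TG\operp\mathsf{P_1}'$. Since $\mathsf D'\subseteq TG\operp\mathsf{P_1}'$ by its very definition, we obtain $\mathsf D'\cap\K_J^\perp=\mathsf D'$, which is a subbundle of constant rank on $G$. The regular reduction theorem recalled in Section \ref{reduction} then produces the smooth reduced Dirac manifold $(G/J,q_J(\mathsf D'))$.

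For the remaining statements, note that $\lie g_0\subseteq\lie g_0'$ implies $N\subseteq J$ since both are connected integral leaves through $e$. Normality of $N$ in $G$ (Remark \ref{quotient}) forces normality of $N$ in $J$, and closedness of $N$ in $G$ makes it closed in $J$, so $J/N$ is a Lie group. The assignment $(gN,jN)\mapsto gjN$ is well-defined by normality of $N$ in $G$, and the resulting right action of $J/N$ on $G/N$ is proper because the action of $J$ on $G$ is proper and both quotients are taken by closed normal subgroups. The identification $G/J\simeq(G/N)/(J/N)$ is the standard third isomorphism theorem for Lie-group quotients: the smooth surjective submersion $G\to G/J$ is constant along $N$-orbits (as $N\subseteq J$), hence factors through $q_N$, and the induced map $G/N\to G/J$ is a $G$-equivariant smooth surjective submersion with fibres $J/N$, identifying $G/J$ with $(G/N)/(J/N)$ as a homogeneous space of $G/N$. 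The only genuinely nontrivial point in the whole lemma is the canonicity of the $J$-action in the first claim, since this is where the integrability assumption enters through Corollary \ref{closed} and the flow-preservation result of \cite{JoRa10b}; all other assertions are routine Lie-theoretic or Dirac-reductive consequences.
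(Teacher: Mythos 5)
Your proof is correct, and it takes a slightly different route to the one assertion of the lemma that carries real content, namely the canonicity of the right $J$-action on $(G,\mathsf D')$. You prove this directly: $(x^L,0)\in\Gamma(\mathsf D')$ for $x\in\lie g_0'$, integrability of $\mathsf D'$ (via Corollary \ref{closed}) closes the Courant bracket, and the flow-preservation result of \cite{JoRa10b} plus connectedness of $J$ propagate the invariance --- this is the argument of Lemma \ref{lemma_on_action_of_G_0} transplanted to $\mathsf D'$, which is exactly what Remark \ref{action_of_J_on_lieD} records. The paper's written proof instead takes from that remark only the resulting $A_j$-invariance of $\lie D/(\lie g_0\times\{0\})$ and feeds it into the equivalence of conditions $2$ and $3$ of Theorem \ref{th_on_D'}, applied to the closed subgroup $J$ (after checking that $\lie D$ satisfies the relevant hypotheses relative to $J$); this returns the right $J$-invariance of $\mathsf D'$ without re-running the flow argument. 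The two routes rest on the same underlying facts: yours is more self-contained and makes the role of integrability explicit, while the paper's reuses the general classification machinery already in place. Your handling of the remaining assertions --- properness from closedness, the computation $\K_J^\perp=TG\operp\mathsf{P_1}'$ giving $\mathsf D'\cap\K_J^\perp=\mathsf D'$, the regular reduction producing $(G/J,q_J(\mathsf D'))$, and the identification $G/J\simeq(G/N)/(J/N)$ --- correctly fills in steps the paper treats as immediate.
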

\begin{proof}
We have seen in Remark \ref{action_of_J_on_lieD} that if $(G,\mathsf D')$ 
is integrable, we have 
$A_j\bar{\lie D}=\bar{\lie D}$ for all $j\in J$. 
By Theorem \ref{th_on_D'} (note that all
the hypotheses are satisfied  since 
$\lie g_0'\times\{0\}\subseteq\lie D\subseteq \lie
g\times\lie p_1'$, the Dirac subspace $\lie D$ is equal to the pullback
$\lie D=T_eq_J^*(T_eq_J\lie D)$
), we get that the Dirac manifold $(G,\mathsf D')$
is right $J$-invariant.
\end{proof}
In the following diagram, the dashed arrows join Dirac Lie groups to their
Dirac homogeneous spaces.

\begin{displaymath}\begin{xy}
\xymatrix{ 
 (G,\mathsf D_G)\ar@{-->}[rrr]
\ar@{-->}[rdr]\ar@{-->}@/_/[ddddddr]\ar@{-->}[dddrr]
\ar[dddr]_{q_N} \ar@{-->}@/^/[dddr]&&&
\ar[dl]^{q_H}(G,\mathsf D')\ar[dddl]^{q_{NH}}\ar@/^2cm/[ddddddll]^{q_J}\\
&&(G/H,\mathsf D_{G/H})\ar[dd]^{q_{N,H}}
&\\
&&&\\
&(G/N,\pi)\ar@{-->}[r]\ar@{-->}[ddd]&\left(G/(N\cdot H),\mathsf
  D_{G/(NH)}\right)&\\
&&&\\
&&&\\
&(G/J,\pi_{G/J})&
}
\end{xy}
\end{displaymath}

\begin{theorem}
Under the hypotheses of the preceding lemma,
the pair $(G/J,q_J(\mathsf{D}'))=:(G/J,\pi_{G/J})$ 
is a Poisson homogeneous space of the Dirac 
Lie group $(G,\mathsf D_G)$ and of the Poisson Lie group $(G/N,\pi)$.
\end{theorem}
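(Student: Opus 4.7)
The plan is to derive the theorem from Theorem \ref{drinfeld} (applied with the subgroup $J$ in place of $H$) and from the preceding theorem about the passage from a Dirac homogeneous space of $(G,\mathsf D_G)$ to a Dirac homogeneous space of the Poisson Lie group $(G/N,\pi)$. The crucial observation is that the Dirac subspace $\lie D\subseteq \lie g\times\lie g^*$ attached to $(G/H,\mathsf D_{G/H})$ already encodes exactly the data needed for the $J$-quotient, since $\lie g_0'=\lie j$ by the very definition of $J$ as the integral leaf of $\mathsf{G_0}'$ through $e$.

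First I would check that $(G/J,q_J(\mathsf D'))$ is a well-defined smooth integrable Dirac manifold: by Lemma \ref{Lemma54} the closed Lie subgroup $J$ acts freely, properly and canonically on $(G,\mathsf D')$ with $\mathsf D'\cap \K_J^\perp=\mathsf D'$, so the regular Dirac reduction theorem applies; integrability of $q_J(\mathsf D')$ follows from that of $\mathsf D'$ (Corollary \ref{closed}). Next I would verify that $(T_eq_J)^*\bigl(q_J(\mathsf D')(eJ)\bigr)=\lie D$: by construction of the quotient Dirac structure and the inclusion $\lie g_0'\times\{0\}\subseteq \lie D$, a pair $(v,\xi)\in\lie g\times\lie g^*$ satisfies $\xi=q_J^*\bar\xi$ for some $\bar\xi$ with $(T_eq_Jv,\bar\xi)\in q_J(\mathsf D')(eJ)$ if and only if $(v,\xi)\in\lie D$. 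I would then apply Theorem \ref{drinfeld} with $H=J$: the inclusions $(\lie g_0+\lie j)\times\{0\}=\lie j\times\{0\}\subseteq \lie D\subseteq \lie g\times(\lie p_1\cap\lie j^\circ)$ follow from $\lie g_0\subseteq \lie g_0'=\lie j$ and $\lie p_1'\subseteq \lie p_1\cap(\lie g_0')^\circ$, the Lagrangian property is inherited, and the $J$-invariance $A_j\bigl(\lie D/(\lie g_0\times\{0\})\bigr)\subseteq \lie D/(\lie g_0\times\{0\})$ for every $j\in J$ is precisely Remark \ref{action_of_J_on_lieD}. This shows that $(G/J,q_J(\mathsf D'))$ is a Dirac homogeneous space of $(G,\mathsf D_G)$.

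To identify $q_J(\mathsf D')$ as the graph of a Poisson bivector $\pi_{G/J}$, I will show that the characteristic distribution $\mathsf{G_0}$ on $G/J$ is trivial. At $eJ$, a vector $\bar v\in\mathsf{G_0}(eJ)$ arises from some $v\in T_eG$ with $(v,0)\in\mathsf D'(e)=\lie D$; but $(v,0)\in\lie D$ forces $v\in\lie g_0'=\lie j$, hence $\bar v=T_eq_Jv=0$. By Proposition \ref{shape_of_gnul_and_pone} applied to the Dirac homogeneous space $(G/J,q_J(\mathsf D'))$, the distribution $\mathsf{G_0}$ vanishes everywhere on $G/J$, so $q_J(\mathsf D')$ is the graph of a skew bivector $\pi_{G/J}$, which is Poisson by integrability.

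Finally, for the Poisson homogeneous structure over $(G/N,\pi)$ I would invoke the preceding theorem with the integrable Dirac homogeneous space $(G/J,q_J(\mathsf D'))$ playing the role of $(G/H,\mathsf D_{G/H})$. Since $N\subseteq J$ and $N$ is normal in $G$ we have $N\cdot J=J$, which is closed in $G$ by hypothesis; the Dirac structure on $G$ built from the new $\lie D$ coincides with our $\mathsf D'$ (Theorem \ref{th_on_D'} shows it depends only on $\lie D$), so the corresponding quotient $q_{NJ}(\mathsf D')$ equals $q_J(\mathsf D')$. The preceding theorem then produces $(G/J,\pi_{G/J})$ as a Dirac, hence Poisson, homogeneous space of $(G/N,\pi)$. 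The main obstacle I expect is the bookkeeping around the different data $\lie D$, $\lie g_0'$, $\lie p_1'$, $J$ and the identification $(T_eq_J)^*(q_J(\mathsf D')(eJ))=\lie D$; once this is in place, everything reduces to checking hypotheses of results already proved.
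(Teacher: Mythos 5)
Your proposal is correct and follows essentially the same route as the paper: Theorem \ref{drinfeld} applied with $H=J$ (via Lemma \ref{Lemma54} and the $J$-invariance of $\lie D/(\lie g_0\times\{0\})$ from Remark \ref{action_of_J_on_lieD}) gives the first claim, and the observation that $(v,0)\in\lie D$ forces $v\in\lie g_0'=\lie j$, so that the characteristic distribution of $q_J(\mathsf D')$ vanishes, gives the Poisson property. The only cosmetic difference is in the second claim, where you invoke the preceding theorem with $H=J$ and $N\cdot J=J$, whereas the paper checks the three conditions of Theorem \ref{drinfeld} directly for the Poisson Lie group $(G/N,\pi)$ and the closed subgroup $J/N$ using the action $\bar A$ of Remark \ref{action_of_GmodN}; both variants reduce to the same verifications.
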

\begin{proof}
By Theorem \ref{drinfeld} and Lemma \ref{Lemma54}, $(G/J,\pi_{G/J})$ is a Dirac homogeneous space of
the integrable Dirac 
Lie group $(G,\mathsf D_G)$. The quotient
$\lie D/(\lie g_0\times\{0\})$ is a Lagrangian subalgebra of $\lie g/\lie
g_0\times\lie p_1$ with
$(\lie g_0+\lie g_0')\times\{0\}=\lie g_0'\times\{0\}\subseteq \lie D
\subseteq \lie g\times\lie p_1'=\lie g\times((\lie g_0')^\circ\cap\lie p_1)$. Furthermore,
$\lie D/(\lie g_0\times\{0\})$ is $A_j$-invariant and hence also $\bar
A_{jN}$-invariant by 
Remark \ref{action_of_GmodN} for all $j\in J$ (see also the proof of the
preceding theorem). Hence, $(G/J,\pi_{G/J})$ is also 
 a Dirac homogeneous space of the Poisson Lie group $(G/N,\pi)$. 

Note that since
$\mathsf{G_0}'=\V_J$, the quotient Dirac structure $\mathsf D_{G/J}:=q_J(\mathsf{D}')$ has
vanishing characteristic distribution and is hence a Poisson manifold
(see the proof of Theorem \ref{canonical_Poisson_Lie_group})
\end{proof}

Finally, we give examples where it is not possible to build the diverse
quotients as above. Let $\widetilde{\operatorname{SL_2(\R)}}$ be the universal
covering of the Lie group $\operatorname{SL_2(\R)}$.

\begin{example}
\begin{enumerate}
\item Consider the Lie group
\[G=\left(\mathbb{T}^2\times \widetilde{\operatorname{SL_2(\R)}}\right)/\Gamma,
\]
where $\Gamma$ is the group homomorphism 
$Z(\widetilde{\operatorname{SL_2(\R)}})\simeq\mathbb{Z}\to\mathbb{T}^2$
given by $\Gamma(z)=
(e^{i\sqrt{2}z},e^{iz})$ for all $z\in Z(\widetilde{\operatorname{SL_2(\R)}})$
(or more generally a group homomorphism with dense image in $\mathbb{T}^2$).
The graph of $\Gamma$ is a discrete normal subgroup of 
 $\operatorname{\mathbb T^2}\times \widetilde{\operatorname{SL_2(\R)}}$ and hence
 the quotient $G $ is a Lie group.
 The Lie algebra of $G $ is equal to the direct sum of Lie algebras $\lie g 
 =\R^2\oplus\lie{sl}_2(\R)$ and has hence $\lie g_0:=\lie{sl}_2(\R)$ as an ideal.
 The corresponding Lie subgroup $N$ of $G $ corresponds to the images 
 of the elements of $\widetilde{\operatorname{SL_2(\R)}}$ in $G $ and is hence by construction
 not closed in $G $.
Let $G$ be endowed with the trivial Dirac structure such that $\lie
g_0=\lie{sl}_2(\R)$;  the quotient Poisson Lie group $(G/N,\pi)$
does not exist here.
\item Consider $G=\mathbb{T}^4\times \R$ with coordinates
$s_1,s_2,s_3,s_4,t$ and the integrable Dirac structure 
given by $\lie g_0=\erz\{x_1
\}$,  $\lie p_1=\erz\{ \xi_2,\xi_3,\xi_4,\xi_5\}$ and $\mathsf D_G$ the
(necessarily) trivial multiplicative Dirac structure 
\[\mathsf D_G=\lie g_0^L\operp\lie p_1^L=\erz\{
(x_1^L,0), (0,\xi_2^L), (0,\xi_3^L), (0,\xi_4^L), (0,\xi_5^L)
\},\]
where
\begin{align*}
\begin{array}{ll}
\xi_2=\sqrt{2}\dr s_1(0)-\dr s_2(0)  &\quad \quad 
x_1=\partial_{s_1}(0)+\sqrt{2}\partial_{s_2}(0)+\partial_t(0)\\
\xi_3=\dr s_1(0)-\dr t(0)&\quad\quad  x_2=-\partial_{s_2}(0)\\
\xi_4=\dr s_3(0)&\quad \quad x_3=-\partial_t(0)\\
\xi_5=\dr s_4(0)&\quad \quad x_4=\partial_{s_3}(0)\\
\xi_1=\dr s_1(0)&\quad \quad x_5=\partial_{s_4}(0).
\end{array}
\end{align*}
The group $N$ is then equal to $N=\{ (e^{ti},e^{\sqrt{2}ti},1,1,t)\mid t\in\R 
\}$ and is closed in $G$ as the graph of a smooth map $\R\to \mathbb{T}^4$. 
The quotient $(G/N,\pi)$ is a torus $\mathbb T^4$
with trivial Poisson Lie group structure. Consider the subgroup 
$H=\{ (e^{2ti},e^{2\sqrt{2}ti},1,1,t)\mid t\in\R \}$
of $G$. Then $H$ is closed in $G$ and each Dirac subspace $\lie D\subseteq
\R^5\times{\R^5}^*$ with $(\lie h+\lie g_0)\times\{0\}\subseteq \lie g\times(\lie
h^\circ\cap\lie p_1)$ induces a homogeneous Dirac  structure on
$G/H\simeq \mathbb{T}^4$. 

The subgroup $N\cdot H$ of $G$ is dense in
$\mathbb{T}^2\times\{1\}^2\times \R\subseteq G$ and is hence not closed in
$G$.

Consider now the Dirac subspace 
\[\lie D:=\erz\left\{\begin{array}{c}
(\partial_{s_1}(0),0),\quad 
(\partial_{s_2}(0),0),\quad 
(\sqrt{3}\partial_{s_3}(0)+\partial_{s_4}(0),0),\\
(\partial_{t}(0),0),\quad 
(X,\dr s_3(0)-\sqrt{3}\dr s_4(0))
\end{array}
\right\}\text{ of }\lie g\times\lie g^*
\]
with $X\in\lie g$ an arbitrary vector satisfying $(\dr s_3(0)-\sqrt{3}\dr
s_4(0))(X)=0$.
The Dirac structure $\mathsf D'=\lie D^L$ defines a $(G,\mathsf D_G)$-homogeneous Dirac 
structure of  since $\lie g_0+\lie h\subseteq \lie g_0'$, but
the leaf $J$ of $\mathsf{G_0}'$ through the neutral element $0$ is 
equal
to
$J=\{ (e^{\theta i},e^{\phi i},e^{\sqrt{3}ti},e^{ti},s)\mid
\theta,\phi,t,s\in\R \}$
and thus 
dense in $G$.
\end{enumerate}
\end{example}

\def\cprime{$'$} \def\polhk#1{\setbox0=\hbox{#1}{\ooalign{\hidewidth
  \lower1.5ex\hbox{`}\hidewidth\crcr\unhbox0}}} \def\cprime{$'$}
  \def\cprime{$'$} \def\cprime{$'$} \def\cprime{$'$} \def\cprime{$'$}
  \def\cprime{$'$} \def\cprime{$'$}
  \def\polhk#1{\setbox0=\hbox{#1}{\ooalign{\hidewidth
  \lower1.5ex\hbox{`}\hidewidth\crcr\unhbox0}}}
  \def\polhk#1{\setbox0=\hbox{#1}{\ooalign{\hidewidth
  \lower1.5ex\hbox{`}\hidewidth\crcr\unhbox0}}}
  \def\polhk#1{\setbox0=\hbox{#1}{\ooalign{\hidewidth
  \lower1.5ex\hbox{`}\hidewidth\crcr\unhbox0}}}
  \def\polhk#1{\setbox0=\hbox{#1}{\ooalign{\hidewidth
  \lower1.5ex\hbox{`}\hidewidth\crcr\unhbox0}}} \def\cprime{$'$}
  \def\polhk#1{\setbox0=\hbox{#1}{\ooalign{\hidewidth
  \lower1.5ex\hbox{`}\hidewidth\crcr\unhbox0}}}
  \def\polhk#1{\setbox0=\hbox{#1}{\ooalign{\hidewidth
  \lower1.5ex\hbox{`}\hidewidth\crcr\unhbox0}}}
  \def\polhk#1{\setbox0=\hbox{#1}{\ooalign{\hidewidth
  \lower1.5ex\hbox{`}\hidewidth\crcr\unhbox0}}}
  \def\polhk#1{\setbox0=\hbox{#1}{\ooalign{\hidewidth
  \lower1.5ex\hbox{`}\hidewidth\crcr\unhbox0}}}

\end{document}